\newcommand{\lasm}{LASM}
\newcommand{\sasm}{SASM}
\newtheorem{theorem}{Theorem}[section]
\newtheorem{example}[theorem]{Example}
\newtheorem{definition}[theorem]{Definition}
\newtheorem{proposition}[theorem]{Proposition}
\newtheorem{lemma}[theorem]{Lemma}
\newtheorem{corollary}[theorem]{Corollary}
\newtheorem{conjecture}[theorem]{Conjecture}
\author{Eric S. Egge\affiliationmark{1}\thanks{Corresponding author.} \and Kailee Rubin\affiliationmark{2}}
\affiliation{
Carleton College, Northfield, MN, USA\\
Epic, Madison, WI, USA}
\title{Snow Leopard Permutations and their Even and Odd Threads}
\keywords{Baxter permutation, Catalan path, Motzkin path, snow leopard permutation}
\begin{document}

\publicationdetails{18}{2016}{2}{5}{1279}

\maketitle

\begin{abstract}
Caffrey, Egge, Michel, Rubin and Ver Steegh recently introduced snow leopard permutations, which are the anti-Baxter permutations that are compatible with the doubly alternating Baxter permutations.
Among other things, they showed that these permutations preserve parity, and that the number of snow leopard permutations of length $2n-1$ is the Catalan number $C_n$.
In this paper we investigate the permutations that the snow leopard permutations induce on their even and odd entries;  we call these the {\em even threads} and the {\em odd threads}, respectively.
We give recursive bijections between these permutations and certain families of Catalan paths.
We characterize the odd (resp.~even) threads which form the other half of a snow leopard permutation whose even (resp.~odd) thread is layered in terms of pattern avoidance, and we give a constructive bijection between the set of permutations of length $n$ which are both even threads and odd threads and the set of peakless Motzkin paths of length $n+1$.
\end{abstract}

\section{Introduction}
\label{sec:intro}

A {\em complete Baxter permutation} $\pi$, as introduced by \cite{BaxterFixed} and characterized by \cite{NumberBaxterGraham}, is a permutation of length $2n+1$ such that, for all $i$ with $1 \leq i \leq 2n+1$,
\begin{itemize}
\item $\pi(i)$ is even if and only if $i$ is even and
\item if $\pi(x) = i$, $\pi(y) = i+1$, and $z$ is between $x$ and $y$ (that is, $x < z < y$ or $y < z < x$), then $\pi(z) < i$ if $i$ is odd and $\pi(z) > i+1$ if $i$ is even.
\end{itemize}
As Chung and her coauthors note, each complete Baxter permutation is uniquely determined by its odd entries.  
Accordingly, for each complete Baxter permutation $\pi$ of length $2n+1$, the associated {\em reduced Baxter permutation} of length $n+1$ is the permutation $\pi$ induces on its odd entries.
For example, 981254367 is a complete Baxter permutation of length nine, whose associated reduced Baxter permutation is 51324.
One can show that a permutation is a reduced Baxter permutation if and only if it avoids the vincular patterns $3\underline{14}2$ and $2\underline{41}3$.  
That is, if $\pi$ has length $n$, then $\pi$ is a reduced Baxter permutation whenever there are no indices $1 \leq i<j<j+1<k<n$ such that $\pi(j)<\pi(k)<\pi(i)<\pi(j+1)$ (for $3\underline{14}2$) or $\pi(j+1)<\pi(i)<\pi(k)<\pi(j)$ (for $2\underline{41}3$).  
For example, the complete Baxter permutation $3\ 2\ 1\ 4\ 13\ 12\ 7\ 8\ 11\ 10\ 9\ 6\ 5$ has reduced Baxter permutation $2174653$, which has no instances of $3\underline{14}2$ or $2\underline{41}3$.  
In contrast, $4613752$ is not a reduced Baxter permutation because the subsequence $6375$ is an instance of $3\underline{14}2$.
We sometimes refer to reduced Baxter permutations as {\em Baxter permutations} for short.

For each complete Baxter permutation $\pi$ of length $2n+1$, the associated {\em anti-Baxter permutation} is the permutation of length $n$ that $\pi$ induces on its even entries.
Although complete Baxter permutations are determined by their associated Baxter permutations, they are not determined by their associated anti-Baxter permutations:  many anti-Baxter permutations appear in several complete Baxter permutations.
For example, 4123 is the anti-Baxter permutation associated with both of the complete Baxter permutations 981254367 and 983214765.
On the other hand, anti-Baxter permutations are characterized by pattern avoidance conditions nearly identical to those which characterize the Baxter permutations:  a permutation $\pi$ is anti-Baxter if and only if it avoids the vincular patterns $3\underline{41}2$ and $2\underline{14}3$.  
That is, if $\pi$ has length $n$, then $\pi$ is anti-Baxter whenever there are no indices $1 \leq i<j<j+1<k<n$ such that $\pi(j+1)<\pi(k)<\pi(i)<\pi(j)$ (for $3\underline{41}2$) or $\pi(j)<\pi(i)<\pi(k)<\pi(j+1)$ (for $2\underline{14}3$).
We say a Baxter permutation $\pi_1$ and an anti-Baxter permutation $\pi_2$ are \emph{compatible} whenever there is a complete Baxter permutation $\pi$ such that $\pi_1$ and $\pi_2$ are the permutations induced on the odd and even entries of $\pi$, respectively.  
One can show that this definition of compatibility is equivalent to the definition of compatibility given by \cite{Comps}.   

Baxter first introduced the permutations that now bear his name in connection with a problem involving fixed points of commuting continuous functions on the closed interval $[0,1]$, but they have also appeared in a variety of other settings.
One such setting involves the {\em Aztec diamond} of order $n$, which is an array of unit squares with $2i$ squares in row $i$  for $1 \leq i \leq n$ and $2(2n-i+1)$ squares in row $i$ for $n < i \leq 2n$, in which the squares are centered in each row.
In Figure \ref{ADexample} we have the Aztec diamond of order three.
\begin{figure}[ht]
\centering
\includegraphics[width=4cm]{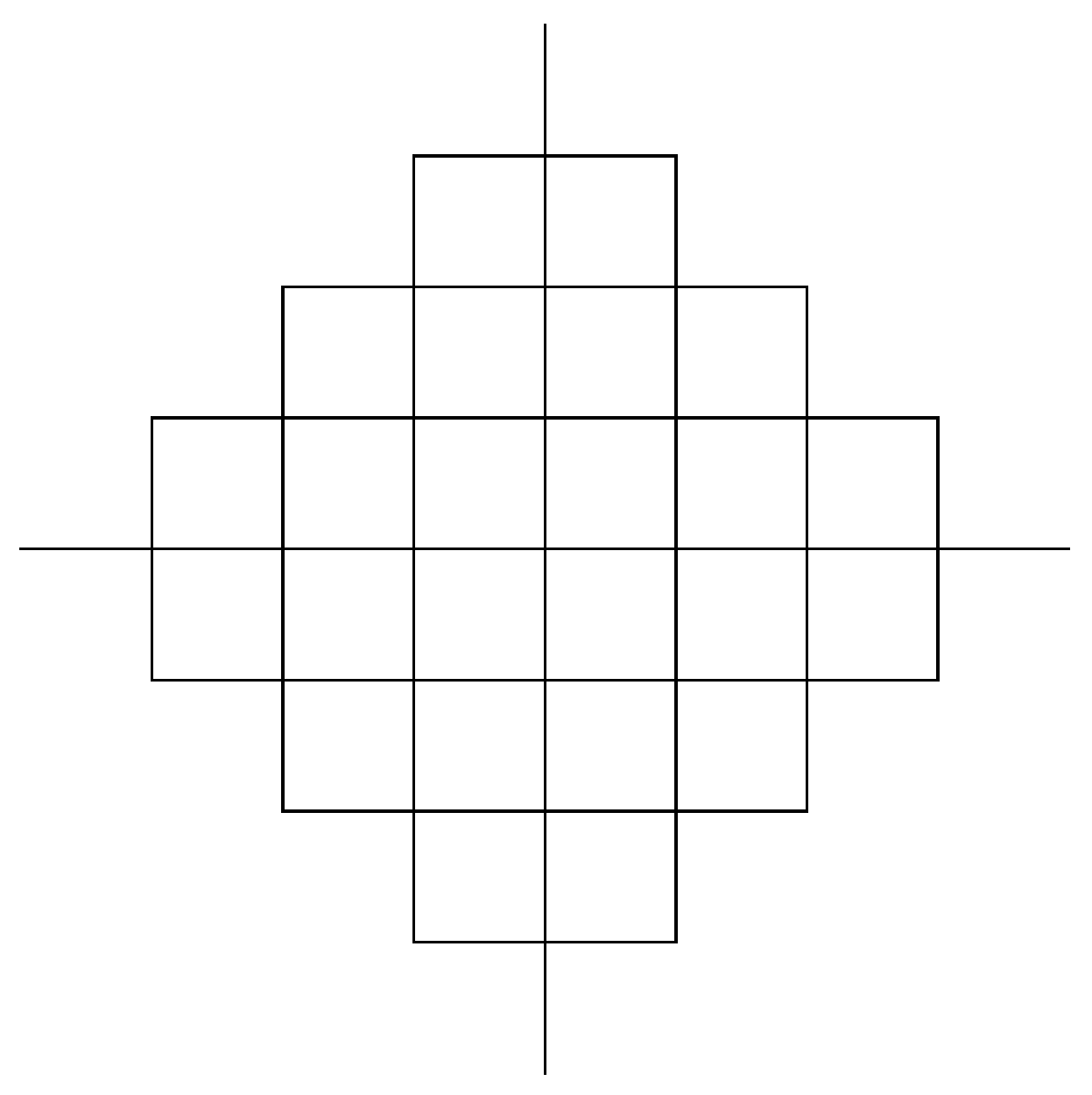}
\caption{The Aztec diamond of order 3.}
\label{ADexample}
\end{figure}
The entries of our complete Baxter permutations will be at the vertices of an Aztec diamond.
We prefer to arrange these vertices in rows and columns, so we will orient all of our Aztec diamonds as in Figure \ref{fig:ADrotated}.
\begin{figure}[ht]
\centering
\setlength{\unitlength}{.15in}
\begin{picture}(8,8)
\put(0,0){\line(1,1){8}}
\put(0,2){\line(1,1){6}}
\put(0,4){\line(1,1){4}}
\put(0,6){\line(1,1){2}}
\put(2,0){\line(1,1){6}}
\put(4,0){\line(1,1){4}}
\put(6,0){\line(1,1){2}}
\put(2,0){\line(-1,1){2}}
\put(4,0){\line(-1,1){4}}
\put(6,0){\line(-1,1){6}}
\put(8,0){\line(-1,1){8}}
\put(8,2){\line(-1,1){6}}
\put(8,4){\line(-1,1){4}}
\put(8,6){\line(-1,1){2}}
\end{picture}
\caption{The Aztec diamond of order 3, reoriented.}
\label{fig:ADrotated}
\end{figure}
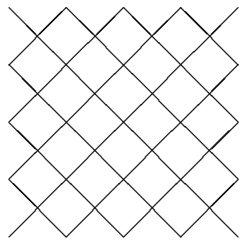

Aztec diamonds can be tiled by dominos, and \cite{EKLP} have described how to construct, for each such tiling of the Aztec diamond of order $n$, a pair of matrices $\sasm(T)$ and $\lasm(T)$ of sizes $n\times n$ and $(n+1) \times (n+1)$, respectively.
Each of these matrices is an {\em alternating-sign matrix} (ASM), which is a matrix with entries in $\{0,1,-1\}$ whose nonzero entries in each row and in each column alternate in sign and sum to one.
(Note that SASM and LASM above are short for small ASM and large ASM, respectively.
For an introduction to ASMs and a variety of related combinatorial objects, see the work of \cite{RobbinsStory}, \cite{Bressoud}, and \cite{ProppASM}.)
To carry out this construction, first note that a tiling of an Aztec diamond with dominos gives rise to a graph whose vertices are lattice points, as in Figure \ref{onedot}.
\begin{figure}[ht]
\centering
\includegraphics[width=3.8cm]{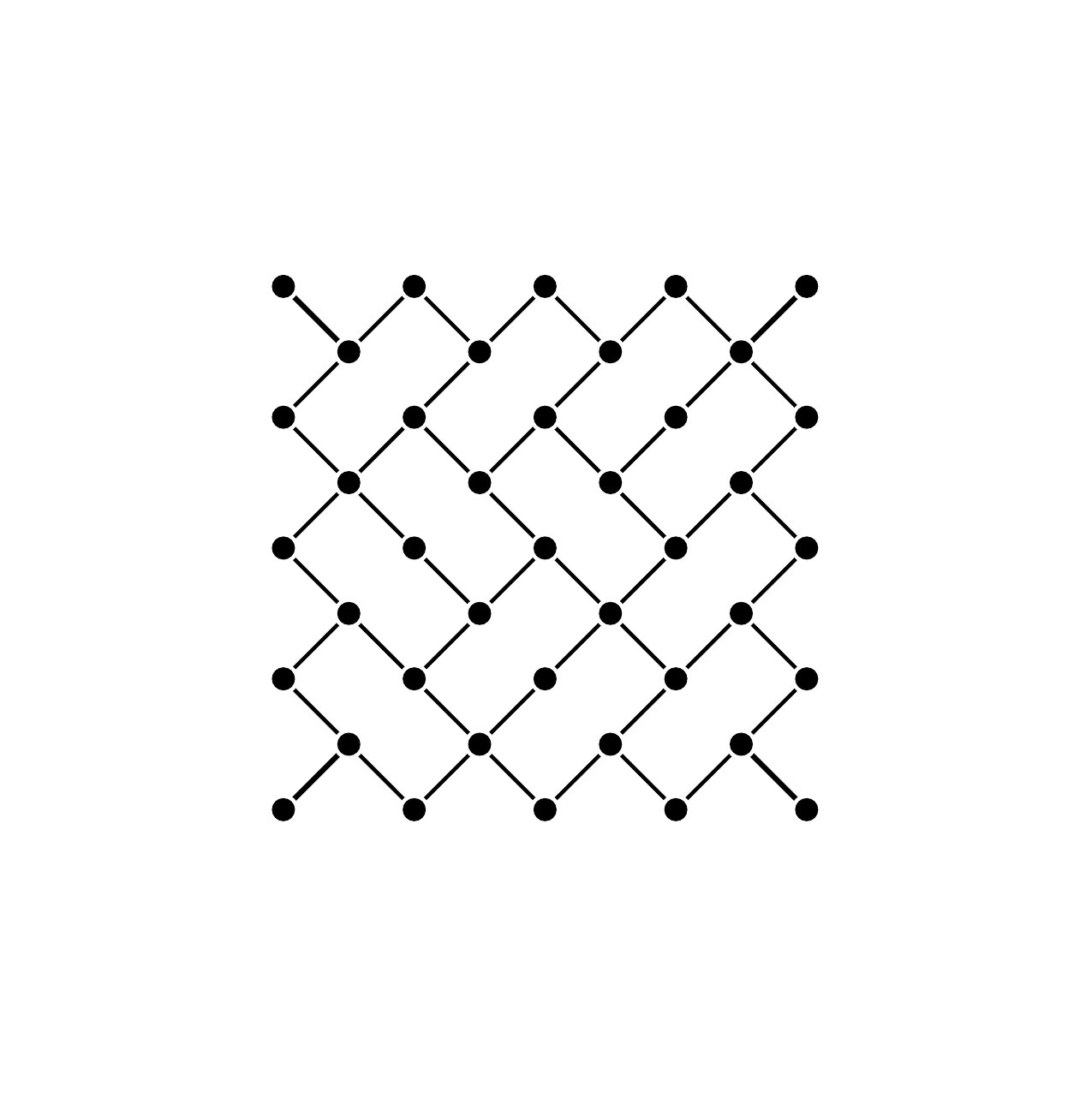}
\caption{The graph of a domino tiling of the Aztec diamond of order 3.}
\label{onedot}
\end{figure}
Each exterior vertex of this graph has the same degree in all domino tilings, so we disregard these vertices.
The remaining vertices fall naturally into two sets as in Figure \ref{twodots}: the black vertices form an $(n+1) \times (n+1)$ matrix while the white vertices form an $n \times n$ matrix. 
\begin{figure}[ht]
\centering
\includegraphics[width=3.8cm]{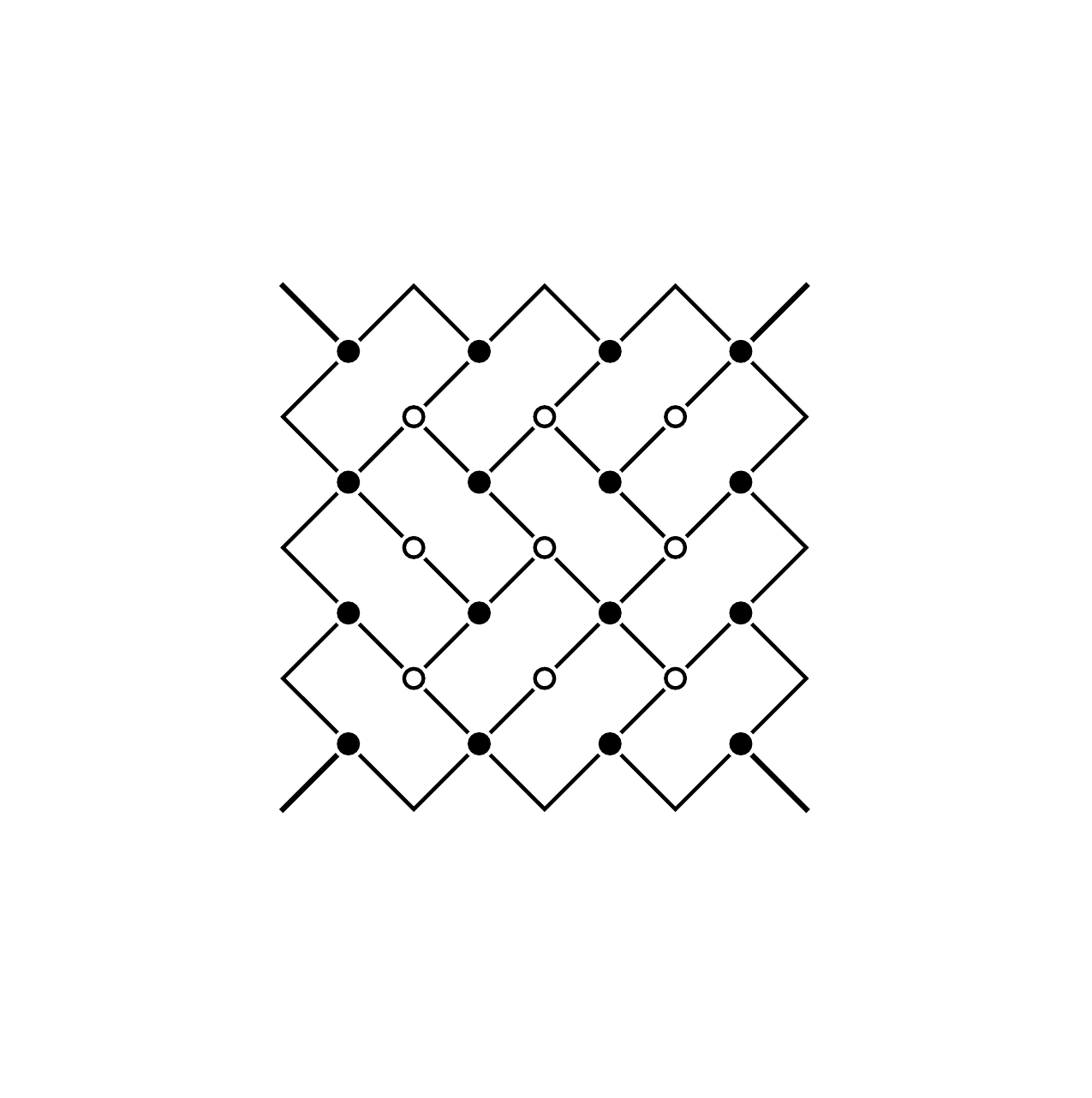}
\caption{The graph of a domino tiling of the Aztec diamond of order 3 with outer vertices removed.}
\label{twodots}
\end{figure}
We construct $\lasm(T)$ on the black vertices by labeling each vertex of degree four with a $1$, labeling each vertex of degree three with a $0$, and labeling each vertex of degree two with a $-1$. 
We construct $\sasm(T)$ on the white vertices by labeling each vertex of degree four with a $-1$, labeling each vertex of degree three with a $0$, and labeling each vertex of degree two with a $1$.  
For example, the tiling $T$ in Figure \ref{twodots} has
$$\lasm(T)= \begin{bmatrix}0 & 0 & 0 & 1 \\ 1 & 0 & 0 & 0 \\ 0 & 0 & 1 & 0 \\ 0 & 1 & 0 & 0\end{bmatrix}$$
and
$$\sasm(T) = \begin{bmatrix}0 & 0 & 1 \\ 1 & 0 & 0 \\ 0 & 1 & 0\end{bmatrix}.$$
\cite{Canary} has shown that $\lasm(T)$ is a permutation matrix if and only if it is the matrix for a Baxter permutation.
In this case, if we now introduce gray dots inside the dominos as in Figure \ref{threedots}, 
\begin{figure}[ht]
\centering
\includegraphics[width=3.8cm]{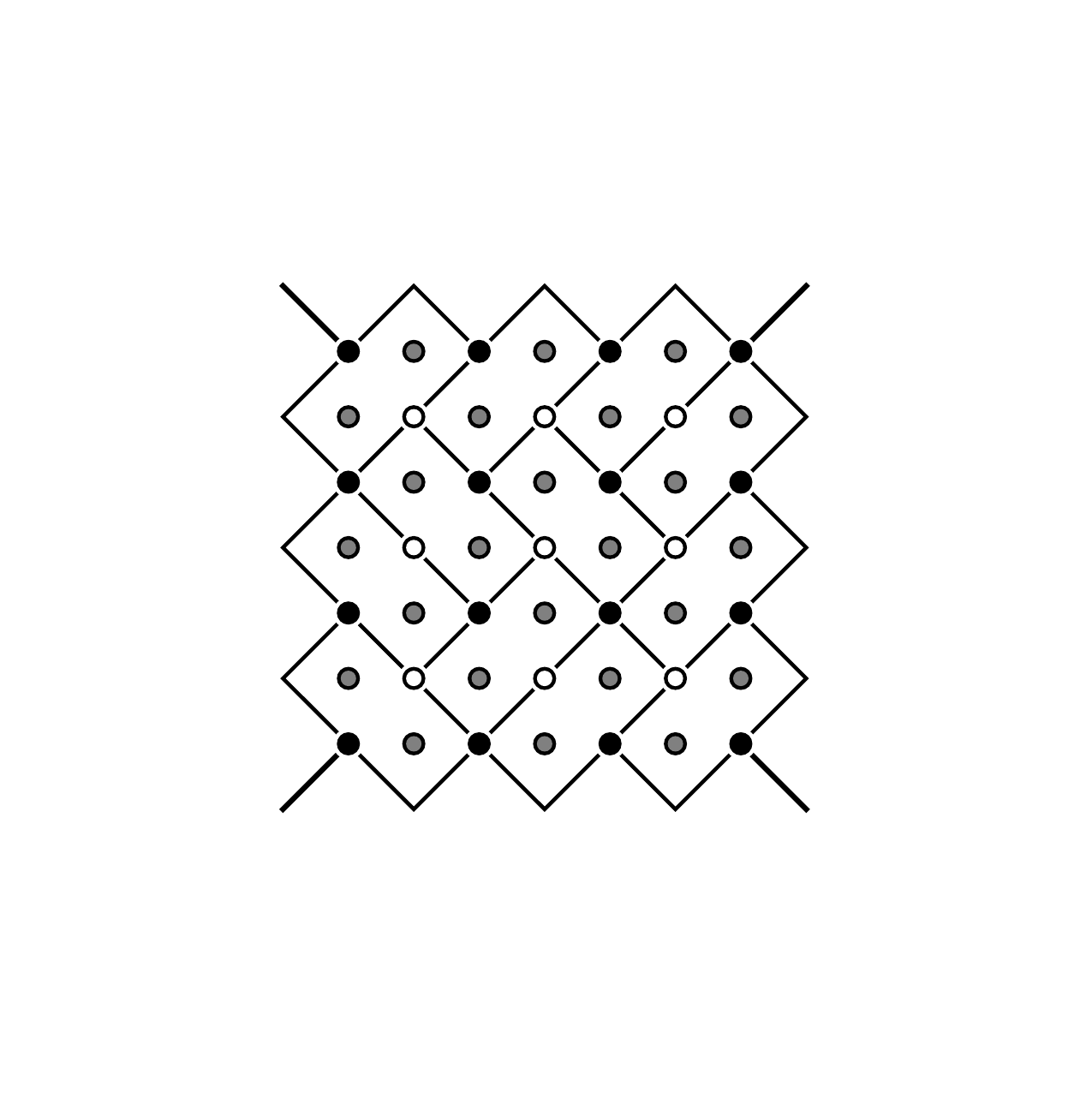}
\caption{The entries of the associated complete Baxter permutation are at the black, white, and gray dots.}
\label{threedots}
\end{figure}
and assign 0s to each of them, then the black, white, and gray dots together form the matrix for a complete Baxter permutation.

While Baxter permutations have been studied in several other combinatorial incarnations (see, for example, the papers of \cite{Floorplans} and \cite{DilksInv}), we focus on a certain subset of the Baxter permutations, and their compatible anti-Baxter permutations.
Specifically, an {\em alternating permutation}, or an {\em up-down permutation}, is a permutation which begins with an ascent, and in which ascents and descents alternate.
Similarly, a {\em doubly alternating permutation} is an alternating permutation whose inverse is also alternating.
\cite{DABPCatalan} have shown that the number of doubly alternating Baxter permutations of length $2n$ is the Catalan number $C_n = \frac{1}{n+1} \binom{2n}{n}$, as is the number of doubly alternating Baxter permutations of length $2n+1$.
Building on this, \cite{Comps} make the following definition.
\begin{definition}
\label{defn:snowleopardpermutation}
A {\em snow leopard permutation} is an anti-Baxter permutation which is compatible with a doubly alternating Baxter permutation.
\end{definition}
\cite{Comps} show that compatibility is a bijection between the set of doubly alternating Baxter permutations of length $n$ and the set of snow leopard permutations of length $n-1$.
They also give a simple bijection between the set of snow leopard permutations of length $2n$ and the set of snow leopard permutations of length $2n-1$ (see Theorem \ref{thm:slpdecom}), so we focus our attention on the snow leopard permutations of odd length.
We write $SL_n$ to denote the set of snow leopard permutations of length $2n-1$;  in Table \ref{table:sln} we list the snow leopard permutations of lengths one, three, and five.
\begin{table}[ht]
\centering
\begin{tabular}{c|c}
$n$ & $SL_n$ \\
\hline
1 & 1 \\
\hline
3 & 123,321 \\
\hline
5 & 12345,14325,34521,54123,54321
\end{tabular}
\caption{The snow leopard permutations of lengths one, three, and five.}
\label{table:sln}
\end{table}

Like the complete Baxter permutations, and as \cite{Comps} show, the snow leopard permutations preserve parity.
In this paper, we study the permutations the snow leopard permutations induce on their odd and even entries;  we call these induced permutations {\em odd threads} and {\em even threads}, respectively.  
In Section \ref{sec:tools} we recall some useful permutation tools.
In Section \ref{sec:evenodd} we give recursive decompositions of the even and odd threads, and we explore the extent to which these decompositions are unique.
In Section \ref{sec:eopaths} we use our decompositions to give recursive bijections between the set of even threads of length $n$ and the set of Catalan paths of length $n+1$ with no ascent of length two, and between the set of odd threads of length $n$ and the set of Catalan paths of length $n$ with no four consecutive steps $NEEN$.
In Section \ref{sec:entangled} we begin to describe which even and odd threads are induced by the same snow leopard permutation.
In particular, we show that the even threads which can be paired with the increasing odd thread to form a snow leopard permutation are exactly the $3412$-avoiding involutions, we show that the odd threads which can be paired with the decreasing even thread to form a snow leopard permutation are exactly the complements of the $3412$-avoiding involutions, and we extend these results in a natural way to layered even and odd threads.
Finally, in Section \ref{sec:janusthreads} we give a constructive bijection between the set of permutations of length $n$ which are both even threads and odd threads and the set of peakless Motzkin paths of length $n+1$.

\section{Permutation Tools}
\label{sec:tools}

Throughout we write $S_n$ denote the set of all permutations of length $n$, written in one-line notation, and for any permutation $\pi$ we write $|\pi|$ to denote the length of $\pi$. 
The complement operation on permutations will be useful for us, so we recall it next.

\begin{definition}
Following \cite{Kitaev}, for any permutation $\pi \in S_n$, we write $c(\pi)$ to denote the complement of $\pi$, which is the permutation in $S_n$ with
$$c(\pi)(j) = n+1-\pi(j)$$
for all $j,$ $1\leq j \leq n$.  
\end{definition}

We will also make extensive use of the following two ways of combining two permutations.

\begin{definition}
For permutations $\pi \in S_n$ and $\sigma \in S_m$ we write $\pi \oplus \sigma$ to denote the permutation in $S_{n+m}$ with
$$
(\pi \oplus \sigma)(j) = \begin{cases}
\pi(j) & if\ 1 \leq j \leq n \\ 
n+\sigma(j-n) & if \ n+1 \leq j \leq n+m
\end{cases}
$$
for all $j$, $1\leq j \leq n $, and we write  $\pi \ominus \sigma$ to denote the permutation in $S_{n+m}$ with 
$$
(\pi \ominus \sigma)(j) = \begin{cases}
m + \pi(j) &if\ 1 \leq j \leq n \\
\sigma(j-n) & if\ n+1 \leq j \leq n+m
\end{cases}
$$
for all $j$, $1 \leq j \leq n$.
\end{definition}

\begin{example}
If $\pi = 14235$ and $\sigma = 312$ then $c(\pi) = 52431, \ \pi \oplus \sigma = 14235867$, and $\pi \ominus \sigma = 47568312$.
\end{example}

At times we will be working with permutations of length $0$ or $-1$.  
For these, we use the following convention.

\begin{definition}
We write $\emptyset$ to denote the empty permutation, which is the unique permutation of length $0$, and we write $@$ to denote the \emph{antipermutation}, which is the unique permutation of length $-1$.  We have $c(@) = @$ and $1 \oplus @ = @ \oplus 1 = 1 \ominus @ = @ \ominus 1 = \emptyset.$
\end{definition}

Note that $1$ is a complete Baxter permutation, whose corresponding (reduced) Baxter permutation is also $1$.
Since $1$ is also doubly alternating, $\emptyset$ (its compatible anti-Baxter permutation) is a snow leopard permutation.
Similarly, $\emptyset$ is a complete Baxter permutation, whose corresponding Baxter permutation is also $\emptyset$.
Since $\emptyset$ is also doubly alternating and has length $0$, its compatible anti-Baxter permutation is $@$.
In particular, $@$ is a snow leopard permutation.

\section{Even Threads, Odd Threads, and their Decompositions}
\label{sec:evenodd}

Our starting point, and one of the main results of \cite{Comps}, is the following recursive decomposition of the snow leopard permutations of odd positive length.

\begin{theorem}
\label{thm:slpdecom}
\cite[Thm.~2.20 and Prop.~2.22]{Comps}
For any permutation $\pi$ of odd positive length, $\pi$ is a snow leopard permutation if and only if there exist snow leopard permutations $\pi_1$ and $\pi_2$ of odd length such that $\pi = (1 \oplus c(\pi_1) \oplus 1)\ominus 1 \ominus \pi_2$.  For any permutation $\sigma$ of even length, $\sigma$ is a snow leopard permutation if and only if $\sigma = 1 \oplus c(\sigma_1)$ for some snow leopard permutation $\sigma_1$.
In addition, these decompositions are uniquely determined by $\pi$ and $\sigma$, respectively.
\end{theorem}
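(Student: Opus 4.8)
The plan is to prove both equivalences by strong induction on the length, passing through the complete Baxter permutation that witnesses compatibility. By definition a snow leopard permutation of length $m$ is the anti-Baxter permutation carried by the even entries of a complete Baxter permutation of length $2m+1$ whose odd entries carry a doubly alternating Baxter permutation $\tau$ of length $m+1$. I would first record, as a lemma, how the ``doubly alternating'' hypothesis on $\tau$ together with the two defining conditions of a complete Baxter permutation pins down the extreme entries: where the value $1$ and the largest value of $\tau$ must sit, and hence, after the parity-interleaving that builds the complete Baxter permutation, where the extreme even values land in the anti-Baxter part. Throughout I would use the parity preservation of snow leopard permutations (already established by \cite{Comps}), so that a value of a given parity always occupies a position of the same parity.

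For the even-length statement I would prove the sharper fact that a snow leopard permutation $\sigma$ of length $2n$ satisfies $\sigma(1)=1$. Granting this, $\sigma = 1 \oplus \rho$ where $\rho$ is supported on the values $2,\dots,2n$; reducing $\rho$ and complementing yields a permutation $\sigma_1$, and I would check that $\sigma_1$ is again the even thread of a complete Baxter permutation with doubly alternating odd part, i.e.\ a snow leopard permutation, so that $\sigma = 1 \oplus c(\sigma_1)$. The complement appears precisely because detaching the leading minimum reverses the order that the remaining block inherits from $\tau$. The converse, that every $1 \oplus c(\sigma_1)$ is a snow leopard permutation, is obtained by running this reduction backwards and exhibiting the enlarged doubly alternating Baxter partner.

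For the odd-length statement the organizing object is the global maximum. By parity preservation the value $2n-1$ of a snow leopard permutation $\pi$ of length $2n-1$ occupies an odd position $2a+1$, and I would show that the top $2a+1$ values occupy the first $2a+1$ positions, so that $\pi$ skew-decomposes at this maximum. Reading off the three resulting blocks, the upper-left block is $1 \oplus c(\pi_1) \oplus 1$ (its least and greatest entries forced into the two corners), the singleton immediately to the right of the maximum is the $\ominus 1$, and the remaining lower-right block is $\pi_2$; the inductive hypothesis then identifies $\pi_1$ and $\pi_2$ as snow leopard permutations of odd length, with the conventions $c(@)=@$ and $1 \oplus @ = \emptyset$ absorbing the degenerate blocks when $a=0$ or $b=0$. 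The converse again reverses the construction, assembling the complete Baxter permutation block by block and checking the two Baxter conditions and the alternation of $\tau$ directly.

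Uniqueness is then immediate in both cases, because the split point is intrinsic: it is the leading position for $\sigma$ and the (odd) position of the global maximum for $\pi$, and $\sigma_1,\pi_1,\pi_2$ are recovered from the blocks by reduction and complementation. The step I expect to be the main obstacle is the transfer in the opening lemma: showing that the forced placement of the extreme \emph{odd} values of $\tau$ becomes, after interleaving into the complete Baxter permutation, exactly the advertised placement of the extreme \emph{even} values of the anti-Baxter part. Tracking this carefully --- especially explaining why the central block emerges complemented rather than in its original order --- is where the alternation hypotheses and the second Baxter condition must be used with the most care.
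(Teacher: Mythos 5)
You should first note what you are comparing against: the paper does not prove this statement at all. It is imported verbatim from \cite{Comps} (their Theorem 2.20 and Proposition 2.22), and the present paper treats it as a black box from which everything else (Corollary \ref{cor:slpblocks}, Theorem \ref{thm:decomp}, etc.) is derived. So your attempt has to be judged as a free-standing reconstruction of the cited result, and as such it is a plan rather than a proof, with two genuine gaps. First, a circularity risk: you take parity preservation as ``already established by \cite{Comps},'' but in that paper parity preservation is Corollary 2.24, i.e.\ it sits \emph{downstream} of the very decomposition (Thm.~2.20/Prop.~2.22) you are proving. It is also not a formal consequence of the parity condition on complete Baxter permutations alone: if $\pi$ is complete Baxter of length $2m+1$, its even part is $\sigma(j)=\pi(2j)/2$, and parity preservation of $\sigma$ amounts to $\pi(2j)\equiv 0 \pmod 4$ exactly when $j$ is even, which is special to the doubly alternating case. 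You would need to prove this independently or carry it along inside your induction; as written, your starting point may presuppose your conclusion.

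Second, the entire content of the theorem is concentrated in the step you explicitly defer. That the maximum of $\pi$ sits at an odd position $2a+1$, that the top $2a+1$ values occupy the \emph{first} $2a+1$ positions (so that $\pi$ skew-decomposes there), that the corners of the upper block are forced to be its minimum and maximum, that the interior emerges as a \emph{complement} $c(\pi_1)$, and that position $2a+2$ carries the connector --- none of this follows from the anti-Baxter vincular conditions plus parity alone (most parity-preserving anti-Baxter permutations are not snow leopard permutations, as the paper's Section on future directions emphasizes), so all of it must be extracted from the compatible doubly alternating Baxter partner $\tau$ through the interleaving. Your ``opening lemma'' is exactly this transfer, and you flag it yourself as the main obstacle without supplying an argument; likewise the converse direction (reassembling the complete Baxter permutation block by block and verifying both vincular avoidance conditions across the block boundaries, and double alternation of $\tau$ and $\tau^{-1}$) is asserted but not carried out. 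Your uniqueness argument is fine and essentially trivial once the structure is in hand, and your block anatomy does match the true decomposition --- but the proposal identifies where the work lies rather than doing it, so it cannot yet be accepted as a proof of the cited theorem.
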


We use the term \emph{connector} to refer to the entry in $\pi$ corresponding to the final $1$ in the decomposition of $\pi$.  
From the recursive decomposition of $\pi$ in Theorem \ref{thm:slpdecom}, we see that the connector of $\pi$ will always equal $\pi(1)-1$ if $\pi(1) > 1$.  When $\pi(1)=1$, the permutation will not have a connector.  
Note that when a snow leopard permutation $\pi$ does have a connector, the corresponding entry in $c(\pi)$ will be a left-to-right maximum and a fixed point in $c(\pi)$.  

\begin{example}
Let $\pi = 587694321 = (1 \oplus c(123) \oplus 1) \ominus 1 \ominus 321$.  
Since both $123$ and $321$ are snow leopard permutations, $\pi$ must also be a snow leopard permutation, with connector $\pi(6) = 4$.  
Here, $c(\pi) = 523416789$ so $c(\pi)(6) = 6$ is both a left-to-right maximum and a fixed point in $c(\pi)$.
\end{example}

Note that for snow leopard permutations of lengths one and three we have
$$1 = 1 \oplus c(@) \oplus 1,$$
$$123 = 1 \oplus c(1) \oplus 1,$$
and
$$1 = (1 \oplus c(@) \oplus 1) \ominus 1 \ominus (1 \oplus c(@) \oplus 1),$$
so by using Theorem \ref{thm:slpdecom} inductively we get the following natural ``block decomposition'' for snow leopard permutations of odd positive length.

\begin{corollary}
\label{cor:slpblocks}
For any permutation $\pi$ of odd positive length, $\pi$ is a snow leopard permutation if and only if there is a sequence $\pi_1,\ldots,\pi_k$ of snow leopard permutations of odd length such that
$$\pi = (1 \oplus c(\pi_1) \oplus 1) \ominus 1 \ominus (1 \oplus c(\pi_2) \oplus 1) \ominus 1 \ominus \cdots \ominus 1 \ominus (1 \oplus c(\pi_k) \oplus 1).$$
In addition, the sequence $\pi_1,\ldots, \pi_k$ is uniquely determined by $\pi$.
\end{corollary}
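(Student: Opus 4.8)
The plan is to derive Corollary~\ref{cor:slpblocks} from Theorem~\ref{thm:slpdecom} by induction on the (odd) length $2n-1$ of $\pi$, unwinding the recursive decomposition repeatedly until the recursion terminates. First I would set up the induction with base cases handled by the computations displayed just before the statement: the length-one permutation $1 = 1 \oplus c(@) \oplus 1$ gives the $k=1$ decomposition with $\pi_1 = @$ (a snow leopard permutation by the convention established in Section~\ref{sec:tools}), and the two length-three permutations are covered similarly. These base cases verify that the claimed block form is attainable when the recursion bottoms out.

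For the inductive step, I would take a snow leopard permutation $\pi$ of odd length at least three and apply Theorem~\ref{thm:slpdecom} once to write $\pi = (1 \oplus c(\pi_1) \oplus 1) \ominus 1 \ominus \pi_2$, where $\pi_1$ and $\pi_2$ are snow leopard permutations of strictly smaller odd length. By the induction hypothesis, $\pi_2$ itself admits a block decomposition $\pi_2 = (1 \oplus c(\sigma_1) \oplus 1) \ominus 1 \ominus \cdots \ominus 1 \ominus (1 \oplus c(\sigma_k) \oplus 1)$ for some sequence of snow leopard permutations $\sigma_1,\ldots,\sigma_k$ of odd length. Substituting this expression for $\pi_2$ into the one-step decomposition and relabeling $\pi_1$ as the new first block yields exactly the desired form with $k+1$ blocks. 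The key algebraic point is that $\ominus$ is associative, so that prepending $(1 \oplus c(\pi_1) \oplus 1) \ominus 1 \ominus$ to the block form of $\pi_2$ simply produces a longer chain of the same shape; I would note this associativity explicitly (it follows directly from the definition of $\ominus$) since it is what lets the blocks concatenate cleanly. The forward direction—that every such block-form permutation \emph{is} a snow leopard permutation—follows from the same induction run in reverse: grouping off the first block exhibits $\pi$ in the one-step form of Theorem~\ref{thm:slpdecom}, whose converse guarantees $\pi$ is a snow leopard permutation.

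For uniqueness of the sequence $\pi_1,\ldots,\pi_k$, I would again induct, leaning on the uniqueness clause of Theorem~\ref{thm:slpdecom}. Given two block decompositions of the same $\pi$, the one-step decomposition is unique, so the first blocks must agree (identifying $\pi_1$ and the remainder $\pi_2$ uniquely); the induction hypothesis applied to $\pi_2$ then forces the remaining blocks to coincide. Here the role of the connector discussion is useful: the position $\pi(1)-1$ pins down where the first block ends, reinforcing that the split is forced rather than a matter of choice.

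I expect the main obstacle to be purely bookkeeping rather than conceptual—namely, carefully tracking how the length parameter and the $\oplus/\ominus$ offsets behave under repeated substitution, and confirming that the terminal block corresponds to the correct base case (so that no spurious extra $\ominus 1$ or missing $1 \oplus \cdots \oplus 1$ appears at the end of the chain). A secondary subtlety is making sure the empty/antipermutation conventions are invoked correctly at the bottom of the recursion, since the innermost block can be $1 \oplus c(@) \oplus 1 = 1$ rather than something of positive length; verifying that these degenerate blocks slot into the general formula without special-casing is the detail I would check most carefully.
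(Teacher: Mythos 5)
Your proof is correct and takes essentially the same approach as the paper, which derives the corollary precisely by applying Theorem~\ref{thm:slpdecom} inductively, with the identities $1 = 1 \oplus c(@) \oplus 1$ and their length-three analogues serving as the base cases you describe, and the theorem's uniqueness clause propagating block-by-block exactly as in your uniqueness argument. Your flagged subtlety about the antipermutation convention at the terminal block (taking $\pi_2 = @$ so that $(1 \oplus c(\pi_1) \oplus 1) \ominus 1 \ominus @ = 1 \oplus c(\pi_1) \oplus 1$, yielding the $k=1$ case with no spurious $\ominus 1$) is exactly the point on which the induction closes.
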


Theorem \ref{thm:slpdecom} tells us that the set of snow leopard permutations of odd length is closed under a complicated operation, but we might hope it is also closed under some simpler operation.
Unfortunately, this set is not closed under $\oplus$ or $\ominus$:  neither $321 \ominus 123 = 654123$ nor $321 \oplus 123 = 321456$ are snow leopard permutations, even though $321$ and $123$ are.
However, we do have the following result.

\begin{theorem}
\label{thm:ominusslp}
Suppose $\pi$ and $\sigma$ are snow leopard permutations of odd length.  Then $\pi \ominus 1 \ominus \sigma$ is also a snow leopard permutation.
\end{theorem}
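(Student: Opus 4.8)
The plan is to reduce the statement entirely to the block decomposition of Corollary~\ref{cor:slpblocks}, avoiding any induction. First I would record the basic (and easily verified) fact that the skew sum $\ominus$ is associative: from the definition, $\rho \ominus \tau$ simply places the pattern of $\rho$ in the upper-left block and the pattern of $\tau$ in the lower-right block, so any parenthesization of a chain $\rho_1 \ominus \rho_2 \ominus \cdots \ominus \rho_r$ produces the same permutation. This is what lets me treat the expression $\pi \ominus 1 \ominus \sigma$ unambiguously and, more to the point, concatenate two block decompositions freely.

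Next, since $\pi$ and $\sigma$ are snow leopard permutations of odd length, Corollary~\ref{cor:slpblocks} supplies sequences of odd-length snow leopard permutations $\pi_1,\ldots,\pi_k$ and $\sigma_1,\ldots,\sigma_\ell$ with
$$\pi = (1 \oplus c(\pi_1) \oplus 1) \ominus 1 \ominus \cdots \ominus 1 \ominus (1 \oplus c(\pi_k) \oplus 1)$$
and
$$\sigma = (1 \oplus c(\sigma_1) \oplus 1) \ominus 1 \ominus \cdots \ominus 1 \ominus (1 \oplus c(\sigma_\ell) \oplus 1).$$
Substituting these into $\pi \ominus 1 \ominus \sigma$ and using associativity, the single $1$ sitting between $\pi$ and $\sigma$ becomes exactly the connector joining the final block $(1 \oplus c(\pi_k) \oplus 1)$ of $\pi$ to the first block $(1 \oplus c(\sigma_1) \oplus 1)$ of $\sigma$. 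Thus
$$\pi \ominus 1 \ominus \sigma = (1 \oplus c(\pi_1) \oplus 1) \ominus 1 \ominus \cdots \ominus 1 \ominus (1 \oplus c(\pi_k) \oplus 1) \ominus 1 \ominus (1 \oplus c(\sigma_1) \oplus 1) \ominus 1 \ominus \cdots \ominus 1 \ominus (1 \oplus c(\sigma_\ell) \oplus 1),$$
which is precisely a block decomposition of the form in Corollary~\ref{cor:slpblocks} with block sequence $\pi_1,\ldots,\pi_k,\sigma_1,\ldots,\sigma_\ell$. Every term in this sequence is an odd-length snow leopard permutation, so the ``if'' direction of Corollary~\ref{cor:slpblocks} immediately yields that $\pi \ominus 1 \ominus \sigma$ is a snow leopard permutation. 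A quick length check confirms consistency with the hypothesis of the corollary, since $|\pi| + 1 + |\sigma|$ is odd.

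I expect the only point requiring genuine care to be the bookkeeping in the substitution step, namely verifying that the lone $1$ in $\pi \ominus 1 \ominus \sigma$ lines up with a single honest connector between the two block chains, rather than accidentally merging with, or splitting, an adjacent block. This is exactly where associativity of $\ominus$ does the work, and it is worth writing the substitution out once explicitly to be certain that no extra or missing $1$ appears at the seam. The degenerate cases in which $\pi$ or $\sigma$ equals the antipermutation $@$ can be handled separately using the conventions $@ \ominus 1 = \emptyset$ and $1 \ominus @ = \emptyset$, which collapse $\pi \ominus 1 \ominus \sigma$ to $\sigma$ or to $\pi$, respectively; both are snow leopard permutations by hypothesis, so these cases are immediate.
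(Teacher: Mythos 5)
Your proof is correct, and it takes a genuinely different route from the paper's. The paper argues by induction on $n$ where $|\pi| = 2n-1$, using only the one-step decomposition of Theorem~\ref{thm:slpdecom}: after the base cases $\pi = @$ and $\pi = 1$, it writes $\pi = (1 \oplus c(\pi_1) \oplus 1) \ominus 1 \ominus \pi_2$, regroups $\pi \ominus 1 \ominus \sigma$ as $(1 \oplus c(\pi_1) \oplus 1) \ominus 1 \ominus (\pi_2 \ominus 1 \ominus \sigma)$, and applies the inductive hypothesis to the shorter $\pi_2 \ominus 1 \ominus \sigma$. You instead invoke the full block decomposition of Corollary~\ref{cor:slpblocks} once for each of $\pi$ and $\sigma$ and observe that the block sequence of $\pi \ominus 1 \ominus \sigma$ is literally the concatenation $\pi_1,\ldots,\pi_k,\sigma_1,\ldots,\sigma_\ell$; this is legitimate and non-circular, since the corollary precedes the theorem in the paper and is derived directly from Theorem~\ref{thm:slpdecom}, not from Theorem~\ref{thm:ominusslp}. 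In effect the induction has not disappeared from your argument --- it is outsourced to the proof of Corollary~\ref{cor:slpblocks} --- but your packaging makes the structural reason for the theorem transparent in a way the paper's proof does not: the $\ominus 1 \ominus$ seam is exactly one more connector in a longer block chain. What the paper's route buys in exchange is that it never needs associativity of $\ominus$ as an explicit ingredient, and it absorbs the degenerate cases into the induction's base cases rather than treating them separately. Your separate treatment of $\pi = @$ and $\sigma = @$ is not optional bookkeeping but genuinely necessary, and you handle it correctly: Corollary~\ref{cor:slpblocks} applies only to permutations of odd \emph{positive} length, and the conventions $@ \ominus 1 = 1 \ominus @ = \emptyset$ mean associativity cannot be safely invoked in expressions involving $@$. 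One small point you could make explicit: the case $|\pi| = 1$ needs no special treatment in your argument because $1 = 1 \oplus c(@) \oplus 1$ gives it a block decomposition with $k = 1$, which the paper records just before stating the corollary.
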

\begin{proof}
Suppose $\pi$ and $\sigma$ are snow leopard permutations of odd length with $|\pi| = 2n-1$. 
We argue by  induction on $n$. 

When $n = 0$ we have $\pi = @$ and $\pi \ominus 1 \ominus \sigma = @ \ominus 1 \ominus \sigma = \sigma$, which is a snow leopard permutation.
When $n = 1$ we have $\pi = 1$ and 
\begin{eqnarray*}
\pi \ominus 1 \ominus \sigma &=& 1 \ominus 1 \ominus \sigma \\
&=& (1 \oplus @ \oplus 1) \ominus 1 \ominus \sigma,
\end{eqnarray*}
which is a snow leopard permutation by Theorem \ref{thm:slpdecom}.

Now suppose $\pi \ominus 1 \ominus \sigma$ is a snow leopard permutation whenever $|\pi|< 2k-1$ and let $|\pi| = 2k-1$.
Since $\pi$ is a snow leopard permutation of odd positive length, by Theorem \ref{thm:slpdecom} there are snow leopard permutations $\pi_1$ and $\pi_2$ with
\begin{eqnarray*}
\pi \ominus 1 \ominus \sigma &=& ((1 \oplus c(\pi_1) \oplus 1) \ominus 1 \ominus \pi_2) \ominus 1 \ominus \sigma \\
&=& (1 \oplus c(\pi_1) \oplus 1) \ominus 1 \ominus (\pi_2 \ominus 1 \ominus \sigma).
\end{eqnarray*}
Since $|\pi| = 2k-1$ and $|(1\oplus c(\pi_1) \oplus 1) \ominus 1| \geq 2,$ we must have $|\pi_2| <2k-1$. 
Now, since $\pi_2$ and $\sigma$ are snow leopard permutations with $|\pi_2| < 2k-1$, $\pi_2 \ominus 1 \ominus \sigma$ is a snow leopard permutation by induction.  
Thus, $\pi \ominus 1 \ominus \sigma = (1 \oplus c(\pi_1) \oplus 1) \ominus 1 \ominus \pi_3$, where $\pi_3$ is a snow leopard permutation, so $\pi \ominus 1 \ominus \sigma$ is a snow leopard permutation by Theorem \ref{thm:slpdecom}.
\end{proof}

\cite{Comps} show in their Corollary 2.24 that snow leopard permutations preserve parity.  
That is, for any snow leopard permutation $\pi$ and for all $j$ with $1 \leq j \leq |\pi|$, we know that $\pi(j)$ is even if and only if $j$ is even.  
We can separate any parity-preserving permutation into two smaller permutations as follows.

\begin{definition}
For any permutation $\pi$ which preserves parity, we write $\pi^e$ to denote the permutation $\pi$ induces on its even entries and we write $\pi^o$ to denote the permutation $\pi$ induces on its odd entries.  
Note that if $|\pi| = 2n+1$ then $|\pi^e| = n$ and $|\pi^o| = n+1$.
\end{definition}

Although in general we will only consider snow leopard permutations of positive odd length, we include one special case.  
If $\pi = @$, then since $|\pi| = -1$, we must have $|\pi^o| = 0$ and $|\pi^e| = -1$.  
This gives us $\pi^o = \emptyset$ and $\pi^e = @$.

\begin{definition}
We say a permutation $\sigma$ is an \emph{even thread} (resp.~\emph{odd thread}) whenever there is a snow leopard permutation $\pi$ of odd length such that $\pi^e = \sigma$ (resp.~$\pi^o = \sigma$), and we say an even thread $\alpha$ and an odd thread $\beta$ are {\em entangled} whenever there is a snow leopard permutation $\pi$ with $\pi_e = \alpha$ and $\pi^o = \beta$.
We write $ET_n$ (resp.~$OT_n$) to denote the set of even (resp.~odd) threads of length $n$. 
\end{definition}

\begin{example}
The permutation $\pi = 587694321$ is a snow leopard permutation with $\pi^o = 34521$ and $\pi^e = 4321$, so $34521$ is an odd thread, $4321$ is an even thread, and these threads are entangled with each other.
\end{example}

The recursive decomposition for snow leopard permutations that we gave in Theorem \ref{thm:slpdecom} induces a similar decomposition on the odd and even threads.

\begin{theorem}
\label{thm:decomp}
A permutation $\alpha$ is an even thread of nonnegative length if and only if there is an even thread $\alpha_1$ and an odd thread $\beta_1$ with
\begin{equation}
\label{eqn:edecomp}
\alpha = c(\beta_1)  \ominus 1 \ominus \alpha_1.
\end{equation}
Similarly, a permutation $\beta$ is an odd thread of positive length if and only if there is an even thread $\alpha_2$ and an odd thread $\beta_2$ with
\begin{equation}
\label{eqn:odecomp}
\beta = (1 \oplus c(\alpha_2) \oplus 1) \ominus \beta_2.
\end{equation}
\end{theorem}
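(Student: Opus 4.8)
The plan is to deduce both equivalences from the snow leopard decomposition of Theorem~\ref{thm:slpdecom} by computing how the maps $\pi \mapsto \pi^e$ and $\pi \mapsto \pi^o$ interact with that decomposition. Concretely, I would first prove the two identities
\begin{equation*}
\pi^e = c(\pi_1^o) \ominus 1 \ominus \pi_2^e
\qquad\text{and}\qquad
\pi^o = (1 \oplus c(\pi_1^e) \oplus 1) \ominus \pi_2^o,
\end{equation*}
valid whenever $\pi = (1 \oplus c(\pi_1) \oplus 1) \ominus 1 \ominus \pi_2$ is the decomposition of Theorem~\ref{thm:slpdecom} of a snow leopard permutation $\pi$ into snow leopard permutations $\pi_1$ and $\pi_2$ of odd length. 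Granting these, the theorem is immediate: given an even thread $\alpha = \pi^e$, decomposing $\pi$ exhibits $\alpha = c(\pi_1^o) \ominus 1 \ominus \pi_2^e$ with $\pi_1^o$ an odd thread and $\pi_2^e$ an even thread; conversely, given an odd thread $\beta_1$ and an even thread $\alpha_1$, I would choose snow leopard permutations $\pi_1, \pi_2$ of odd length with $\pi_1^o = \beta_1$ and $\pi_2^e = \alpha_1$, assemble the snow leopard permutation $\pi = (1 \oplus c(\pi_1) \oplus 1) \ominus 1 \ominus \pi_2$ via Theorem~\ref{thm:slpdecom}, and read off $\pi^e = c(\beta_1) \ominus 1 \ominus \alpha_1$. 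The odd-thread statement follows from the second identity in the same way.

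Two ingredients feed the identities. First I would record the auxiliary fact that complementation commutes with thread extraction on parity-preserving permutations of odd length: if $\rho$ preserves parity and $|\rho|$ is odd, then $(c(\rho))^e = c(\rho^e)$ and $(c(\rho))^o = c(\rho^o)$. This holds because $c$ then adds an even constant to each value, preserving parity, and reverses the ranks within the odd (resp.~even) values exactly as complementation does on the induced subpermutation; this is a short rank computation. Since $\pi_1$ has odd length, $c(\pi_1)$ is of this type, so $(c(\pi_1))^o = c(\pi_1^o)$ and $(c(\pi_1))^e = c(\pi_1^e)$.

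The heart of the argument is the second ingredient. Writing $|\pi_1| = 2a-1$ and $|\pi_2| = 2b-1$, I would expand $\pi = (1 \oplus c(\pi_1) \oplus 1) \ominus 1 \ominus \pi_2$ position by position, tracking the additive shifts that $\oplus$ and $\ominus$ introduce; one finds $\pi(1) = 2b+1$, $\pi(j) = 2b+1+c(\pi_1)(j-1)$ for $2 \le j \le 2a$, $\pi(2a+1) = 2a+2b+1$, $\pi(2a+2) = 2b$, and $\pi(2a+2+k) = \pi_2(k)$. Using that $c(\pi_1)$ and $\pi_2$ preserve parity, I would then sort the even positions into three value-bands (the shifted odd entries of $c(\pi_1)$ on top, the lone connector value $2b$ in the middle, and the even entries of $\pi_2$ on the bottom) and check that the relative orders within the bands are $(c(\pi_1))^o$, $1$, and $\pi_2^e$, giving the first identity; for the odd thread I would check that the top $a+1$ odd entries assemble into $1 \oplus (c(\pi_1))^e \oplus 1$ while the remaining entries give $\pi_2^o$. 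Combining this with the commutation fact yields the two displayed identities.

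I expect this band bookkeeping to be the main obstacle: one must verify simultaneously that $\pi$ is parity-preserving, that each band consists of values of a single parity, and that the bands occupy the correct value ranges so that they assemble under $\ominus$ (and, for the odd thread, under the outer $1 \oplus (\cdot) \oplus 1$). Finally I would dispatch the degenerate cases, checking directly from the conventions of Section~\ref{sec:tools} that $\pi_1 = @$ or $\pi_2 = @$, together with the resulting threads $\alpha = \emptyset$ or $\beta$ of minimal positive length, still satisfy the identities; this is where the hypotheses ``nonnegative length'' and ``positive length'' are needed, since they exactly exclude the lengths $-1$ and $0$ that the right-hand sides cannot produce.
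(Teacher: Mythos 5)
Your proposal is correct and takes essentially the same route as the paper: both directions are reduced to Theorem~\ref{thm:slpdecom} via the identities $\pi^e = c(\pi_1^o) \ominus 1 \ominus \pi_2^e$ and $\pi^o = (1 \oplus c(\pi_1^e) \oplus 1) \ominus \pi_2^o$, which the paper simply asserts (``we see that'') and you verify explicitly. Your positional bookkeeping, the commutation fact $(c(\rho))^e = c(\rho^e)$, $(c(\rho))^o = c(\rho^o)$, and the degenerate cases with $@$ and $\emptyset$ are all sound and merely fill in details the paper leaves to the reader.
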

\begin{proof}
($\Rightarrow$) 
If $\alpha$ is an even thread of length $n$, then by definition there is a snow leopard permutation $\pi$ of length $2n+1$ such that $\pi_e = \alpha$.
Since $\pi$ is a snow leopard permutation of odd positive length, by Theorem \ref{thm:slpdecom} there are snow leopard permutations $\pi_1$ and $\pi_2$ of odd length such that $\pi = (1 \oplus c(\pi_1) \oplus 1) \ominus 1 \ominus \pi_2$. 
We see that $\pi^e = c(\pi_1^o) \ominus 1 \ominus \pi_2^e$.

If $\beta$ is an odd thread of length $n+1$, then by definition there is a snow leopard permutation $\pi$ of length $2n+1$ such that $\pi^o = \beta.$  
Again, we can write $\pi = (1 \oplus c(\pi_1) \oplus 1) \ominus 1 \ominus \pi_2.$  
Here we see that $\pi^o = (1 \oplus c(\pi_1^e) \oplus 1) \ominus \pi_2^o$.

($\Leftarrow$) 
Now suppose $\alpha$ is a permutation of length $n$ and there are permutations $\beta_1 \in OT_{k}$ and $\alpha_1 \in ET_{n-k-1}$ such that $\alpha = c(\beta_1) \ominus 1 \ominus \alpha_1$.  
There must be snow leopard permutations $\pi_1$ and $\pi_2$ of lengths $2k-1$ and $2n-2k-1$, respectively, such that $\pi_1^o = \beta_1$ and $\pi_2^e = \alpha_1$.  
By Theorem~\ref{thm:slpdecom}, $\pi = (1 \oplus c(\pi_1) \oplus 1) \ominus 1 \ominus \pi_2$ is a snow leopard permutation of length $2n+1$.  
Here, $\pi^e = c(\pi_1^o) \ominus 1 \ominus \pi_2^e = c(\beta_1) \ominus 1 \ominus \alpha_1 = \alpha$.

Finally, suppose $\beta$ is a permutation of length $n+1$ and there are permutations $\alpha_2 \in ET_{k}$ and $\beta_2 \in OT_{n-k-1}$ such that $\beta = (1 \oplus c(\alpha_2) \oplus 1) \ominus \beta_2$.  
Since $\alpha_2$ is an even thread of length $k$, there is a snow leopard permutation $\pi_1$ of length $2k+1$ such that $\pi_1^e = \alpha_2$.  
Similarly, since $\beta_2$ is an odd thread of length $n-k-1$, there is a snow leopard permutation $\pi_2$ of length $2n-2k-3$ such that $\pi_2^o = \beta_2$.
By Theorem~\ref{thm:slpdecom}, $\pi = (1 \oplus c(\pi_1) \oplus 1) \ominus 1 \ominus \pi_2$ is a snow leopard permutation of length $2n+1$.  
Here, $\pi^o = (1 \oplus c(\pi_1^e) \oplus 1) \ominus \pi_2^o = (1 \oplus c(\alpha_2) \oplus 1) \ominus \beta_2 = \beta.$
\end{proof}

Note that the decomposition of an odd thread given in Theorem~\ref{thm:decomp} is unique, while the decomposition of an even thread is not.  
That is, for a given odd thread $\beta$ there is a unique even thread $\alpha_1$ and a unique odd thread $\beta_1$ for which \eqref{eqn:odecomp} holds.  
The first entry of $\beta$ corresponds to the first $1$ in the decomposition, the largest entry of $\beta$ corresponds to the second $1$, and then $\alpha_1$ and $\beta_1$ are determined. 
But for a given even thread there can be more than one pair $(\alpha_2, \beta_2)$, consisting of an even thread and an odd thread, for which \eqref{eqn:edecomp} holds.

\begin{example}
The odd threads $4657312$ and $7243561$ can be decomposed as $(1 \oplus c(12) \oplus 1) \ominus 312$ and $(1 \oplus @ \oplus 1) \ominus 243561$, respectively.
\end{example}

\begin{example}
\label{ex:evendecomp}
The even thread $\alpha = 653421$ can be decomposed as $c(\emptyset) \ominus 1 \ominus 53421, c(1) \ominus 1 \ominus 3421$ or $c(12435) \ominus 1 \ominus \emptyset$.
\end{example}

In each of the three decompositions of $\alpha$ in Example~\ref{ex:evendecomp}, we see that the middle $1$ corresponds to a left-to-right maximum and fixed point in $c(\alpha)$.  
It's not difficult to see that this will hold for all even threads.  
In particular, suppose $\alpha$ is an even thread, and $\alpha_1$ and $\beta_1$ are an even thread and an odd thread, respectively, for which $\alpha = c(\beta_1) \ominus 1 \ominus \alpha_1$.
Then $c(\alpha) = \beta_1 \oplus 1 \oplus c(\alpha_1)$, and there are exactly $k = |\beta_1|$ entries of $c(\alpha)$ preceding $c(\alpha)(k+1)$, all of which are less than $c(\alpha)(k+1)$.
It follows that $c(\alpha)(k+1)$ is a left-to-right maximum and fixed point in $c(\alpha)$.  
Since the connector of a snow leopard permutation $\pi$ is both a left-to-right maximum and fixed point in $c(\pi)$, and the $1$ in the decomposition of an even thread corresponds to the connector of a snow leopard permutation, we will refer to an entry of an even thread $\alpha$ as an \emph{eligible connector} whenever it corresponds to an entry of $c(\alpha)$ which is both a left-to-right maximum and a fixed point.

It is worth noting that the converse of the above statement does not hold:  some even threads have eligible connectors which do not arise from a decomposition of the thread.
For example, $\alpha = 354621$ is an even thread with eligible connectors 2 and 1.
If 1 is to correspond to the 1 in a decomposition $c(\beta) \ominus 1 \ominus \alpha_1$, then $c(\beta) = 24351$ and $\beta = 42315$.
However, $42315$ is not an odd thread.  
We will prove that although not every eligible connector in an even thread $\alpha$ corresponds to a $1$ in a decomposition of $\alpha$, the leftmost eligible connector does always correspond to such a $1$.  
We begin with two simple ways of constructing new threads from old threads, which arise from the fact that if $\pi$ and $\sigma$ are snow leopard permutations of odd length then $\pi \ominus 1 \ominus \sigma$ is, too.

\begin{proposition}
\label{prop:ominusoddknot}
Suppose $\beta_1$ and $\beta_2$ are odd threads.
Then $\beta_1 \ominus \beta_2$ is also an odd thread, and the permutations which are entangled with $\beta_1 \ominus \beta_2$ are exactly the permutations of the form $\alpha_1 \ominus 1 \ominus \alpha_2$, where $\alpha_1$ and $\alpha_2$ are entangled with $\beta_1$ and $\beta_2$, respectively.
\end{proposition}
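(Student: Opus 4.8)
The plan is to prove both halves of the statement by reducing everything to the recursive decomposition of odd threads in Theorem~\ref{thm:decomp}, together with the key closure fact in Theorem~\ref{thm:ominusslp} that $\pi \ominus 1 \ominus \sigma$ is a snow leopard permutation whenever $\pi$ and $\sigma$ are. First I would establish that $\beta_1 \ominus \beta_2$ is an odd thread: by definition there are snow leopard permutations $\pi_1, \pi_2$ of odd length with $\pi_1^o = \beta_1$ and $\pi_2^o = \beta_2$. By Theorem~\ref{thm:ominusslp} the permutation $\pi_1 \ominus 1 \ominus \pi_2$ is a snow leopard permutation, and I would check that passing to odd entries gives $(\pi_1 \ominus 1 \ominus \pi_2)^o = \pi_1^o \ominus \pi_2^o = \beta_1 \ominus \beta_2$. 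This last computation is the crux of the first half: because $\pi_1$ has odd length, the inserted middle $1$ sits at an even position and so gets absorbed into the even entries, leaving exactly the direct sum of the odd threads. I expect this bookkeeping about which entries are odd or even — and hence how $\ominus$ on the full permutation descends to $\ominus$ on the odd threads — to be the one place demanding genuine care.

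For the characterization of the entangled permutations, I would argue both inclusions. Suppose first that $\alpha$ is entangled with $\beta_1 \ominus \beta_2$, so there is a snow leopard permutation $\pi$ with $\pi^o = \beta_1 \ominus \beta_2$ and $\pi^e = \alpha$. The plan is to use the uniqueness of the odd-thread decomposition noted after Theorem~\ref{thm:decomp}: since the decomposition $\beta = (1 \oplus c(\alpha_2) \oplus 1) \ominus \beta_2'$ of an odd thread is uniquely read off from the positions of its first and largest entries, I would show that the outermost block structure of $\beta_1 \ominus \beta_2$ forces $\pi$ itself to decompose so that its two odd-thread halves are $\beta_1$ and $\beta_2$. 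Tracking this through Theorem~\ref{thm:decomp}, the corresponding even thread $\alpha = \pi^e$ must split as $\alpha_1 \ominus 1 \ominus \alpha_2$ with $\alpha_1$ entangled with $\beta_1$ and $\alpha_2$ entangled with $\beta_2$.

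Conversely, given $\alpha_1$ entangled with $\beta_1$ and $\alpha_2$ entangled with $\beta_2$, there are snow leopard permutations $\pi_1, \pi_2$ with $\pi_1^o = \beta_1$, $\pi_1^e = \alpha_1$, $\pi_2^o = \beta_2$, $\pi_2^e = \alpha_2$. Again $\pi_1 \ominus 1 \ominus \pi_2$ is a snow leopard permutation by Theorem~\ref{thm:ominusslp}, and I would verify the two parity projections simultaneously:
\begin{eqnarray*}
(\pi_1 \ominus 1 \ominus \pi_2)^o &=& \beta_1 \ominus \beta_2, \\
(\pi_1 \ominus 1 \ominus \pi_2)^e &=& \alpha_1 \ominus 1 \ominus \alpha_2,
\end{eqnarray*}
where the middle $1$ now lands among the even entries precisely because $\pi_1$ has odd length. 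This exhibits $\alpha_1 \ominus 1 \ominus \alpha_2$ as entangled with $\beta_1 \ominus \beta_2$, closing the loop. The main obstacle throughout is the parity accounting in these projections; once one is confident that $\ominus 1 \ominus$ on the snow leopard permutation restricts to $\ominus$ on odd threads and to $\ominus 1 \ominus$ on even threads, both directions follow directly from Theorems~\ref{thm:slpdecom}, \ref{thm:ominusslp}, and \ref{thm:decomp} and the uniqueness of the odd-thread decomposition.
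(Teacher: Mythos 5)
Your first half and the reverse inclusion match the paper's proof exactly: both rest on Theorem~\ref{thm:ominusslp} together with the parity bookkeeping $(\pi_1 \ominus 1 \ominus \pi_2)^o = \pi_1^o \ominus \pi_2^o$ and $(\pi_1 \ominus 1 \ominus \pi_2)^e = \pi_1^e \ominus 1 \ominus \pi_2^e$, which you identify correctly (the middle $1$ lands at even position $|\pi_1|+1$ with even value $|\pi_2|+1$, so it joins the even entries). The gap is in the forward inclusion, i.e., showing that \emph{every} even thread $\alpha$ entangled with $\beta_1 \ominus \beta_2$ has the form $\alpha_1 \ominus 1 \ominus \alpha_2$. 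A single application of the uniqueness of the decomposition \eqref{eqn:odecomp} to $\pi^o = \beta_1 \ominus \beta_2$ identifies only the outermost block $1 \oplus c(\alpha') \oplus 1$, whose second $1$ is the largest entry of $\beta_1 \ominus \beta_2$ and hence sits \emph{inside} the $\beta_1$ segment; it does not, as you assert, ``force $\pi$ itself to decompose so that its two odd-thread halves are $\beta_1$ and $\beta_2$.'' Applying Theorem~\ref{thm:slpdecom} once to $\pi$ splits $\pi^o$ at that outermost block boundary, which is generally strictly inside $\beta_1$, and nothing in your sketch rules out the possibility that the descent between $\beta_1$ and $\beta_2$ falls among the odd entries of some deeper block of $\pi$. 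The paper closes exactly this hole with Corollary~\ref{cor:slpblocks}: it writes $\pi$ in its full block decomposition $(1 \oplus c(\pi_1) \oplus 1) \ominus 1 \ominus \cdots \ominus 1 \ominus (1 \oplus c(\pi_k) \oplus 1)$ and observes that the boundary $1$s of each block occupy odd positions, so each block contributes its smallest odd entry first and its largest odd entry last; consequently no single block's odd entries can straddle the $\beta_1/\beta_2$ split, the split must occur at one of the connectors, and regrouping the blocks yields $\pi = \rho_1 \ominus 1 \ominus \rho_2$ with $\rho_1, \rho_2$ snow leopard permutations (again by Corollary~\ref{cor:slpblocks}), $\rho_i^o = \beta_i$, and $\alpha = \rho_1^e \ominus 1 \ominus \rho_2^e$.

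Your route can in fact be salvaged without Corollary~\ref{cor:slpblocks}, but only by an induction you never set up: writing $\beta_1 = (1 \oplus c(a) \oplus 1) \ominus b$ gives $\beta_1 \ominus \beta_2 = (1 \oplus c(a) \oplus 1) \ominus (b \ominus \beta_2)$, so uniqueness of \eqref{eqn:odecomp} applied to $\pi = (1 \oplus c(\rho) \oplus 1) \ominus 1 \ominus \sigma$ forces $\rho^e = a$ and $\sigma^o = b \ominus \beta_2$, and induction on $|\beta_1|$ splits $\sigma^e$. Even then you owe a nontrivial final step: showing that the accumulated prefix $c(\rho^o) \ominus 1 \ominus (\text{even part for } b)$ is actually \emph{entangled} with $\beta_1$, which requires exhibiting a witnessing snow leopard permutation such as $(1 \oplus c(\rho) \oplus 1) \ominus 1 \ominus \tau$ with $\tau^o = b$. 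As written, the proposal asserts the conclusion of the hard direction where its mechanism --- block decomposition or induction --- should be.
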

\begin{proof}
If $\beta_1$ and $\beta_2$ are odd threads, then there exist snow leopard permutations $\pi_1$ and $\pi_2$ such that $\pi_1^o = \beta_1$ and $\pi_2^o = \beta_2$.  
From Theorem~\ref{thm:ominusslp}, we know that $\pi = \pi_1 \ominus 1 \ominus \pi_2$ is a snow leopard permutation with $\pi^o = \pi_1^o \ominus \pi_2^o$.  
Thus, $\pi_1^o \ominus \pi_2^o = \beta_1 \ominus \beta_2$ is an odd thread.

To prove the rest of the Proposition, first suppose $\alpha_1$ and $\alpha_2$ are even threads which are entangled with the odd threads $\beta_1$ and $\beta_2$, respectively.
Then there are snow leopard permutations $\pi_1$ and $\pi_2$ such that $\pi_1^e = \alpha_1$, $\pi_1^o = \beta_1$, $\pi_2^e = \alpha_2$, and $\pi_2^o = \beta_2$.
By Theorem \ref{thm:ominusslp}, $\pi = \pi_1 \ominus 1 \ominus \pi_2$ is a snow leopard permutation, and we find $\pi^e = \alpha_1 \ominus 1 \ominus \alpha_2$ and $\pi^o = \beta_1 \ominus \beta_2$.
Therefore, every permutation of the form $\alpha_1 \ominus 1 \ominus \alpha_2$, where $\alpha_1$ and $\alpha_2$ are entangled with $\beta_1$ and $\beta_2$, respectively, is entangled with $\beta_1 \ominus \beta_2$.

To prove the reverse inclusion, suppose $\alpha$ is an even thread entangled with $\beta_1 \ominus \beta_2$, and $\pi$ is a snow leopard permutation with $\pi^e = \alpha$ and $\pi^o = \beta_1 \ominus \beta_2$.
By Corollary \ref{cor:slpblocks}, there are snow leopard permutations $\pi_1, \ldots, \pi_k$ of odd length such that
$$\pi = (1 \oplus c(\pi_1) \oplus 1) \ominus 1 \ominus (1 \oplus c(\pi_2) \oplus 1) \ominus 1 \ominus \cdots \ominus 1 \ominus (1 \oplus c(\pi_k) \oplus 1).$$
The 1s in each summand $1 \oplus c(\pi_j) \oplus 1$ are all in odd positions, so there must be some $m$ for which $\beta_1 = \pi_1^o$ and $\beta_2 = \pi_2^o$, where
$$\pi_1 = (1 \oplus c(\pi_1) \oplus 1) \ominus 1 \ominus \cdots \ominus 1 \ominus (1 \oplus c(\pi_m) \oplus 1)$$
and
$$\pi_2 = (1 \oplus c(\pi_{m+1}) \oplus 1) \ominus 1 \ominus \cdots \ominus 1 \ominus (1 \oplus c(\pi_k) \oplus 1).$$
By Corollary \ref{cor:slpblocks}, $\pi_1$ and $\pi_2$ are snow leopard permutations of odd length, so if $\alpha_1 = \pi_1^e$ and $\alpha_2 = \pi_2^e$, then we have $\alpha = \alpha_1 \ominus 1 \ominus \alpha_2$, and $\alpha_1$ and $\alpha_2$ are entangled with $\beta_1$ and $\beta_2$, respectively.
\end{proof}

It is worth noting that even though $@$ is not an odd thread, it behaves like one in some circumstances.
For instance, if $@$ were an odd thread, then for any odd thread $1 \ominus \beta$ we could use Proposition \ref{prop:ominusoddknot} to conclude that $@ \ominus 1 \ominus \beta = \beta$ is also an odd thread.
And if $@$ were an odd thread then we could use a similar argument to show that if $\beta \ominus 1$ is an odd thread then $\beta$ is also an odd thread.
Even though these arguments fail, the results still hold, as we show next.

\begin{proposition}
\label{prop:ominusremoveones}
For any permutation $\beta$ of nonnegative length, if $1 \ominus \beta$ or $\beta \ominus 1$ is an odd thread then $\beta$ is an odd thread.
\end{proposition}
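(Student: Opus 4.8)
The plan is to reduce both implications to the unique decomposition of odd threads recorded in Theorem~\ref{thm:decomp}: every odd thread $\gamma$ of positive length factors uniquely as $\gamma = (1 \oplus c(\alpha_2) \oplus 1) \ominus \beta_2$ with $\alpha_2$ an even thread and $\beta_2$ an odd thread. Writing $m = |\beta|$, in either case the permutation $\gamma$ under consideration, namely $\gamma = 1 \ominus \beta$ or $\gamma = \beta \ominus 1$, has length $m+1 \ge 1$, so I can apply this decomposition and then determine $\alpha_2$ and $\beta_2$ from the position of a single distinguished entry of $\gamma$. The facts I would exploit are that the block $D = 1 \oplus c(\alpha_2) \oplus 1$ has length $|\alpha_2|+2$ and is shifted up by $|\beta_2|$ inside $\gamma$, so that $\gamma(1) = |\beta_2|+1$, the maximum entry of $\gamma$ occupies position $|\alpha_2|+2$, and, when $\beta_2$ is nonempty, the value $1$ of $\gamma$ lies in the $\beta_2$ block.

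For the implication involving $1 \ominus \beta$, I would note that the maximum entry of $\gamma = 1 \ominus \beta$ sits at position $1$, while in the decomposition it sits at position $|\alpha_2|+2$; hence $|\alpha_2| = -1$ and $\alpha_2 = @$. Since $1 \oplus c(@) \oplus 1 = 1$, the factorization collapses to $\gamma = 1 \ominus \beta_2$, and injectivity of $\rho \mapsto 1 \ominus \rho$ gives $\beta_2 = \beta$, which is therefore an odd thread. This case needs no induction; the only point to verify is that $\alpha_2 = @$ is a bona fide even thread, which holds because $@^e = @$.

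For the implication involving $\beta \ominus 1$, I would induct on $m$. The base case $\beta = \emptyset$ is immediate since $\emptyset$ is an odd thread. For $m \ge 1$, I would take the decomposition $\gamma = D \ominus \beta_2$ of $\gamma = \beta \ominus 1$ and use that the value $1$ of $\gamma$ lies at the last position: since $m \ge 1$ this forces $\beta_2$ to be nonempty with its own value $1$ in its last slot, so $\beta_2 = \delta \ominus 1$. By associativity of $\ominus$ and cancellation of the trailing $\ominus 1$, this yields $\beta = (1 \oplus c(\alpha_2) \oplus 1) \ominus \delta$. A length count from $m+1 = |D| + |\beta_2|$ together with $|\alpha_2| \ge -1$ gives $|\delta| \le m-1$, so the inductive hypothesis applied to $\delta \ominus 1 = \beta_2$ shows $\delta$ is an odd thread; then $\beta$ has exactly the shape \eqref{eqn:odecomp} and is an odd thread by Theorem~\ref{thm:decomp}.

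I expect the second case to be the main obstacle. In the first case the maximum entry being first pins the decomposition down at once, whereas knowing only that the minimum of $\beta \ominus 1$ is last determines just that $\beta_2$ ends in $1$, which is why I would strip off a single trailing $\ominus 1$ and recurse instead of solving for $\alpha_2$ and $\beta_2$ directly. The remaining care is purely bookkeeping with the degenerate lengths, checking that $\alpha_2 = @$ and $\delta = \emptyset$ are covered by the conventions of Section~\ref{sec:tools} and that the estimate $|\delta| \le m-1$ genuinely makes the induction well founded.
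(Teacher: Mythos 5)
Your proof is correct and takes essentially the same approach as the paper's: for $1 \ominus \beta$ you compare largest entries in the decomposition \eqref{eqn:odecomp} to force $\alpha_2 = @$ and $\beta_2 = \beta$, and for $\beta \ominus 1$ you induct on length, peel the trailing $\ominus 1$ off $\beta_2$, apply the inductive hypothesis, and reassemble via Theorem~\ref{thm:decomp}. The only difference is cosmetic: you make explicit why $\beta_2$ must be nonempty and end in its smallest entry, a step the paper passes over with ``by construction.''
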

\begin{proof}
Suppose $1 \ominus \beta$ is an odd thread.
Then by \eqref{eqn:odecomp} there is an even thread $\alpha_2$ and an odd thread $\beta_2$ such that $1 \ominus \beta = (1 \oplus c(\alpha_2) \oplus 1) \ominus \beta_2$, and it's not hard to see that this decomposition is unique.
Comparing the largest elements on each side, we find $\alpha_2 = @$ and $\beta_2 = \beta$, so $\beta$ is an odd thread.

Now suppose $\beta \ominus 1$ is an odd thread;  we argue by induction on $|\beta \ominus 1|$.
If $|\beta \ominus 1| = 1$ then $\beta \ominus 1 = 1$ and $\beta = \emptyset$, and the result holds.
If $|\beta \ominus 1| = 2$ then $\beta \ominus 1 = 21$ and $\beta = 1$, and the result holds.
If $|\beta \ominus 1| \ge 3$ then by \eqref{eqn:odecomp} there is an even thread $\alpha_2$ and an odd thread $\beta_2$ such that $\beta \ominus 1 = (1 \oplus c(\alpha_2) \oplus 1) \ominus \beta_2$.
Since $|1 \oplus c(\alpha_2) \oplus 1| \ge 1$, we must have $|\beta_2| < |\beta \ominus 1|$.
By construction there is a permutation $\beta_1$ of nonnegative length such that $\beta_2 = \beta_1 \ominus 1$, and by induction $\beta_1$ is an odd thread.
Therefore, $(1 \oplus c(\alpha_2) \oplus 1) \ominus \beta_1 = \beta$ is also an odd thread, by Theorem \ref{thm:decomp}.
\end{proof}

We now turn our attention to constructing new even threads from old even threads.

\begin{proposition}
\label{prop:eveneven}
Suppose $\alpha_1$ and $\alpha_2$ are even threads.  
Then $\alpha_1 \ominus 1 \ominus \alpha_2$ is also an even thread, and if $\beta_1$ and $\beta_2$ are odd threads entangled with $\alpha_1$ and $\alpha_2$, respectively, then $\beta_1 \ominus \beta_2$ is an odd thread entangled with $\alpha_1 \ominus 1 \ominus \alpha_2$.
\end{proposition}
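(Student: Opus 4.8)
The plan is to lift the proposition to the level of snow leopard permutations themselves, exactly as in the proof of Proposition~\ref{prop:ominusoddknot}, and then read off the induced even and odd threads. The whole statement is really the even-thread companion of Proposition~\ref{prop:ominusoddknot}, so almost all of the work has already been done; I only need to isolate the relevant identities.

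First I would record the two bookkeeping identities
\begin{equation*}
(\pi_1 \ominus 1 \ominus \pi_2)^e = \pi_1^e \ominus 1 \ominus \pi_2^e
\qquad\text{and}\qquad
(\pi_1 \ominus 1 \ominus \pi_2)^o = \pi_1^o \ominus \pi_2^o,
\end{equation*}
valid for any parity-preserving permutations $\pi_1,\pi_2$ of odd length. These follow from a direct check: if $|\pi_1| = 2n_1 - 1$ and $|\pi_2| = 2n_2 - 1$, then in $\pi_1 \ominus 1 \ominus \pi_2$ the block $\pi_1$ is shifted upward by the even amount $2n_2$ (so the parities of its entries are unchanged), the interpolated $1$ occupies the even position $2n_1$ and carries the even value $2n_2$, and the block $\pi_2$ keeps its values. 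Tracking the even positions shows the even entries split as $\pi_1^e$ on top, then the single value coming from the middle $1$, then $\pi_2^e$ on the bottom, which is exactly $\pi_1^e \ominus 1 \ominus \pi_2^e$; tracking the odd positions (which skip the even middle position entirely) shows the odd entries split as $\pi_1^o \ominus \pi_2^o$. These are precisely the computations already invoked in the proof of Proposition~\ref{prop:ominusoddknot}.

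With the identities in hand the proposition is immediate. Since $\alpha_1$ and $\alpha_2$ are even threads, there are snow leopard permutations $\pi_1,\pi_2$ of odd length with $\pi_1^e = \alpha_1$ and $\pi_2^e = \alpha_2$. By Theorem~\ref{thm:ominusslp}, $\pi = \pi_1 \ominus 1 \ominus \pi_2$ is a snow leopard permutation, and by the first identity $\pi^e = \alpha_1 \ominus 1 \ominus \alpha_2$, so $\alpha_1 \ominus 1 \ominus \alpha_2$ is an even thread. For the entanglement statement, suppose in addition that $\beta_1$ and $\beta_2$ are odd threads entangled with $\alpha_1$ and $\alpha_2$; then I would choose the witnessing snow leopard permutations so that moreover $\pi_1^o = \beta_1$ and $\pi_2^o = \beta_2$. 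The same $\pi = \pi_1 \ominus 1 \ominus \pi_2$ now satisfies $\pi^e = \alpha_1 \ominus 1 \ominus \alpha_2$ and, by the second identity, $\pi^o = \beta_1 \ominus \beta_2$, so $\beta_1 \ominus \beta_2$ is an odd thread entangled with $\alpha_1 \ominus 1 \ominus \alpha_2$.

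The only genuine content is the pair of splitting identities, and the main obstacle is simply being careful in that bookkeeping---in particular verifying that the interpolated $1$ lands in an even position and carries an even value, so that it contributes to $\pi^e$ but not to $\pi^o$. Everything else is a direct appeal to Theorem~\ref{thm:ominusslp} and the definitions of even and odd threads. Alternatively, the statement could be deduced from the forward inclusion already established in Proposition~\ref{prop:ominusoddknot}, together with the observation that every even thread $\alpha = \pi^e$ is entangled with at least one odd thread, namely $\pi^o$.
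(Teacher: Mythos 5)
Your proposal is correct and follows essentially the same route as the paper's own proof: both take witnessing snow leopard permutations $\pi_1,\pi_2$ with $\pi_1^e=\alpha_1$, $\pi_2^e=\alpha_2$ (and $\pi_1^o=\beta_1$, $\pi_2^o=\beta_2$ for the entanglement claim), apply Theorem~\ref{thm:ominusslp} to $\pi=\pi_1\ominus 1\ominus\pi_2$, and read off $\pi^e=\alpha_1\ominus 1\ominus\alpha_2$ and $\pi^o=\beta_1\ominus\beta_2$. The only difference is that you verify the splitting identities $(\pi_1\ominus 1\ominus\pi_2)^e=\pi_1^e\ominus 1\ominus\pi_2^e$ and $(\pi_1\ominus 1\ominus\pi_2)^o=\pi_1^o\ominus\pi_2^o$ explicitly, where the paper simply asserts them.
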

\begin{proof}
If $\alpha_1$ and $\alpha_2$ are even threads, then there exist snow leopard permutations $\pi_1$ and $\pi_2$ such that $\pi_1^e = \alpha_1$ and $\pi_2^e = \alpha_2$.  From Theorem~\ref{thm:ominusslp}, we know that $\pi = \pi_1 \ominus 1 \ominus \pi_2$ is a snow leopard permutation, with $\pi^e = \pi_1^e \ominus 1 \ominus \pi_2^e$.  Thus, $\pi_1^e \ominus 1 \ominus \pi_2^e = \alpha_1 \ominus 1 \ominus \alpha_2$ is an even thread.  

To prove the rest of the Proposition, suppose $\beta_1$ and $\beta_2$ are odd threads which are entangled with the even threads $\alpha_1$ and $\alpha_2$, respectively.
Then there are snow leopard permutations $\pi_1$ and $\pi_2$ such that $\pi_1^e = \alpha_1$, $\pi_1^o = \beta_1$, $\pi_2^e = \alpha_2$, and $\pi_2^o = \beta_2$.
By Theorem \ref{thm:ominusslp}, $\pi = \pi_1 \ominus 1 \ominus \pi_2$ is a snow leopard permutation, and we find $\pi^e = \alpha_1 \ominus 1 \ominus \alpha_2$ and $\pi^o = \beta_1 \ominus \beta_2$.
In particular, $\beta_1 \ominus \beta_2$ is an odd thread which is entangled with $\alpha_1 \ominus 1 \ominus \alpha_2$.
\end{proof}

Note that if $\alpha_1$ and $\alpha_2$ are even threads, then there may be an odd thread $\beta$ entangled with $\alpha_1 \ominus 1 \ominus \alpha_2$ which does not have the form $\beta_1 \ominus \beta_2$ for odd threads $\beta_1$ and $\beta_2$ which are entangled with $\alpha_1$ and $\alpha_2$, respectively.
For example, if $\alpha_1 = \emptyset$ and $\alpha_2 = 1$ then $\alpha_1 \ominus 1 \ominus \alpha_2 = 21$, which is entangled with $123$.
But $\alpha_1$ is only entangled with 1, $\alpha_2$ is only entangled with $12$ and $21$, and neither $1 \ominus 12 = 312$ nor $1 \ominus 21 = 321$ is equal to $123$.

The next step in proving that the leftmost eligible connector in an even thread always corresponds to a connector in some snow leopard permutation is to show that if any other eligible connector corresponds to a connector in a snow leopard permutation, then the even thread must begin with its largest element.

\begin{lemma}
\label{lem:startwithn}
Suppose $\alpha$ is an even thread of length $n$, the permutation $\pi$ is a snow leopard permutation with $\pi^e = \alpha$, and the connector of $\pi$ does not correspond with the leftmost eligible connector of $\alpha$.
Then $\alpha(1) = n$.
\end{lemma}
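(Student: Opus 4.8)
The plan is to translate everything through the complement and the induced decomposition of Theorem~\ref{thm:decomp}, reducing the claim to a single structural fact about odd threads. Since $\pi$ is a snow leopard permutation of odd positive length, Theorem~\ref{thm:slpdecom} gives snow leopard permutations $\pi_1$ and $\pi_2$ with $\pi = (1 \oplus c(\pi_1) \oplus 1) \ominus 1 \ominus \pi_2$, uniquely. Setting $\beta_1 = \pi_1^o$ and $\alpha_1 = \pi_2^e$, the computation in the proof of Theorem~\ref{thm:decomp} gives $\alpha = c(\beta_1) \ominus 1 \ominus \alpha_1$, so $c(\alpha) = \beta_1 \oplus 1 \oplus c(\alpha_1)$. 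Writing $k = |\beta_1|$, the connector of $\pi$ corresponds to the middle $1$, which sits at position $k+1$ of $\alpha$, and indeed $c(\alpha)(k+1) = k+1$. Since $\alpha(1) = n$ is equivalent to $c(\alpha)(1) = 1$, and $c(\alpha)(1) = \beta_1(1)$ as soon as $k \ge 1$, the lemma reduces to proving $\beta_1(1) = 1$.

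Next I would identify which eligible connectors of $\alpha$ lie weakly left of position $k+1$. For $j \le k$ we have $c(\alpha)(j) = \beta_1(j)$, and as the entries of the $\beta_1$ block are the smallest entries of $c(\alpha)$, position $j$ is an eligible connector of $\alpha$ exactly when $\beta_1(j) = j$ is a left-to-right maximum of $\beta_1$. The eligible connectors at positions $> k+1$ lie in the $c(\alpha_1)$ block, hence to the right of the connector. So the connector is the leftmost eligible connector if and only if $\beta_1$ has no left-to-right maximum that is also a fixed point. The hypothesis therefore forces $k \ge 1$ and supplies an index $j$ with $\beta_1(j) = j$ a left-to-right maximum of $\beta_1$.

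It then remains to prove the key fact, which is where the real work lies: if an odd thread $\beta$ of positive length has a fixed point that is a left-to-right maximum, then $\beta(1) = 1$. By \eqref{eqn:odecomp} write $\beta = (1 \oplus c(\alpha_2) \oplus 1) \ominus \beta_2$ and set $b = |\beta_2|$; then $\beta(1) = b+1$, the largest entry of $\beta$ sits at position $|\alpha_2|+2$, and the entries $1, \ldots, b$ occupy the final $b$ positions. Suppose toward a contradiction that $b \ge 1$, and let $\beta(j) = j$ be a left-to-right maximum, so $\{\beta(1), \ldots, \beta(j)\} = \{1, \ldots, j\}$. The entry $1$ then lies among the first $j$ positions, but $b \ge 1$ places it strictly after position $|\alpha_2|+2$; hence $j > |\alpha_2|+2$, so the prefix also contains the maximum entry of $\beta$ at position $|\alpha_2|+2$. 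This forces $j = |\beta|$, whence $\beta(|\beta|) = |\beta|$; but the last position holds one of the entries $1, \ldots, b$, a value strictly smaller than $|\beta|$, a contradiction. Therefore $b = 0$ and $\beta(1) = 1$.

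Applying this fact to $\beta_1$ yields $\beta_1(1) = 1$, hence $c(\alpha)(1) = 1$ and $\alpha(1) = n$. I expect the main obstacle to be precisely the structural fact about odd threads in the third paragraph: the manipulations in the first two paragraphs are routine bookkeeping with $\oplus$, $\ominus$, and $c$, but pinning down $\beta_1(1) = 1$ genuinely requires the explicit positions of the maximum entry and of the small entries that are forced by the unique decomposition \eqref{eqn:odecomp}.
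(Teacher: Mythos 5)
Your proof is correct and takes essentially the same route as the paper's: both decompose $\alpha = c(\beta_1) \ominus 1 \ominus \alpha_1$ with the $1$ at the connector, decompose $\beta_1 = (1 \oplus c(\alpha_2) \oplus 1) \ominus \beta_2$ via \eqref{eqn:odecomp}, and force $\beta_2 = \emptyset$ by a positional contradiction between the small entries of the $\beta_2$ block and the prefix of extreme entries that an eligible connector left of the connector would require. Your standalone key fact --- an odd thread with a fixed point that is a left-to-right maximum must begin with $1$ --- is exactly the paper's inline claim $\beta_2 = \emptyset$ restated through the complement (the paper works in $c(\beta_1) = (1 \ominus \alpha_2 \ominus 1) \oplus c(\beta_2)$, you work in $\beta_1$ directly), so the two arguments differ only in packaging.
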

\begin{proof}
Let $\alpha(j)$ be the leftmost eligible connector in $\alpha$.
Since this eligible connector does not correspond to the connector in $\pi$, there must be an eligible connector in $\alpha$ to the right of $\alpha(j)$ which does correspond to the connector in $\pi$.
Let this eligible connector be $\alpha(k)$, and note that $k > j$.
Since $\alpha$ is an even thread, \eqref{eqn:edecomp} there is an even thread $\alpha_1$ and an odd thread $\beta_1$ such that $|\beta_1| = k-1$ and 
$$\alpha = c(\beta_1) \ominus 1 \ominus \alpha_1.$$
In particular, $\alpha(j)$ is among $\beta_1(1), \ldots, \beta_1(k-1)$.
Now by \eqref{eqn:odecomp} there is an even thread $\alpha_2$ and an odd thread $\beta_2$ such that
\begin{equation}
\label{eqn:b1decomp}
\beta_1 = (1 \oplus c(\alpha_2) \oplus 1) \ominus \beta_2.
\end{equation}
We claim that $\beta_2 = \emptyset$.

To prove our claim, first note that $c(\beta_1)(j)$ is an eligible connector in $\alpha$, so it is a left-to-right maximum in $c(\alpha)$, a left-to-right minimum in $\alpha$, and a left-to-right minimum in $c(\beta_1)$.
But $c(\alpha)(j)$ is also a fixed point in $c(\alpha)$, so the entries to its left in $c(\alpha)$ are exactly $1,2,\ldots, j-1$.
This means the entries to the left of $\alpha(j)$ in $\alpha$ are $n, n-1, \ldots, n-j+2$, which are all of the entries of $\alpha$ which are larger than $\alpha(j)$.
Therefore, the entries to the left of $c(\beta_1)(j)$ are $k-1, k-2, \ldots, k-j+1$, the largest entries of $c(\beta_1)$, and $c(\beta_1)(j) = k-j$.
But $c(\beta_1) = (1 \ominus \alpha_2 \ominus 1) \oplus \beta_2$, as in Figure \ref{fig:beta1decomp}.
\begin{figure}[ht]
\centering
\includegraphics[width=0.35\textwidth]{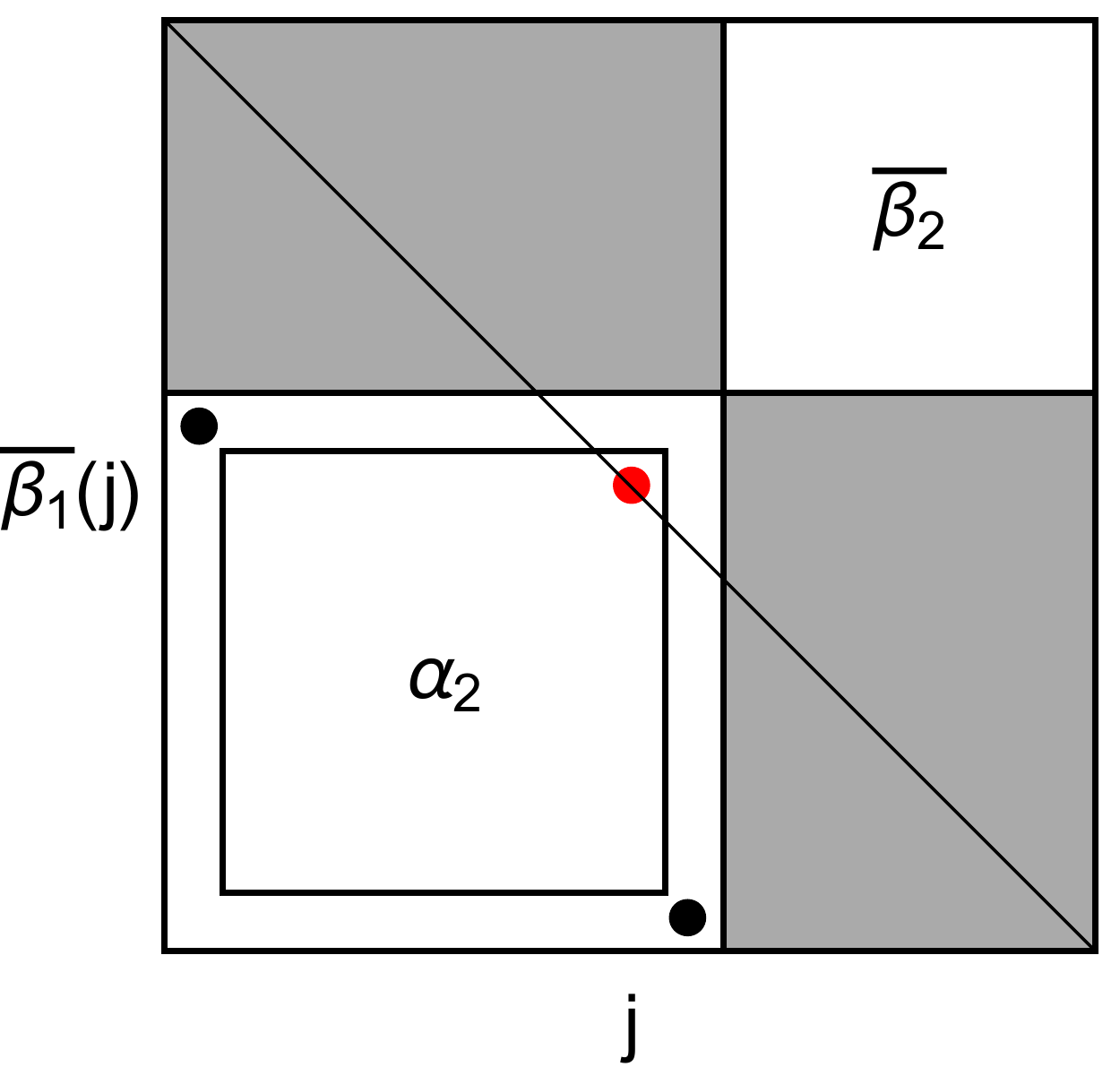}
\caption{A decomposition of $c(\beta_1)$.}
\label{fig:beta1decomp}
\end{figure}
Now if $c(\beta_2) \neq \emptyset$, and $c(\beta_1)(j)$ falls in $1 \ominus \alpha_2 \ominus 1$, then one of $k-1, \ldots, k-j+1$ is to the right of $c(\beta_1)(j)$, which is a contradiction.
On the other hand, if $c(\beta_1)(j)$ falls in $c(\beta_2)$, then there is an entry smaller than $c(\beta_1)(j)$ to the left of $c(\beta_1)(j)$ (since $1 \ominus \alpha_2 \ominus 1$ must be nonempty), which is also a contradiction.
Therefore, $\beta_2 = \emptyset$.

Since $\beta_2 = \emptyset$, by \eqref{eqn:b1decomp} we have $\beta_1 = 1 \oplus c(\alpha_2) \oplus 1$, so $\beta_1(1) = 1$ and $\alpha(1) = n$, as desired.
\end{proof}

Now that we have assembled all of the necessary tools, we are ready to show that the leftmost eligible connector in an even thread always corresponds to a connector in some snow leopard permutation.

\begin{theorem}
\label{thm:ekdecomp}
If $\alpha$ is an even thread of length $n$, then there exists a snow leopard permutation $\pi$ with $\pi^e = \alpha$ whose connector corresponds to the leftmost eligible connector in $\alpha$.  
In particular, if this entry is $c(\alpha)(n-j+1)$, then $\pi(1) = 2j+1$.  
By convention, we say $j = 0$ if no such entry exists.
\end{theorem}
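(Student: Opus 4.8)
The plan is to prove the statement by a case split on whether $\alpha$ begins with its largest entry, using Lemma~\ref{lem:startwithn} to dispose of the generic case and a direct construction built from Proposition~\ref{prop:eveneven} for the exceptional one; no induction is needed, since the earlier lemmas already carry the weight. Throughout I will use Theorem~\ref{thm:decomp}: every even thread $\alpha$ of length $n$ admits a decomposition $\alpha = c(\beta_1)\ominus 1\ominus \alpha_1$ with $\beta_1$ an odd thread and $\alpha_1$ an even thread (allowing the degenerate $\alpha_1 = @$, in which case $\alpha = c(\beta_1)$ and $\pi$ has no connector). The arithmetic claim $\pi(1)=2j+1$ is a one-line computation: if the middle $1$ sits at position $p = n-j+1$ of $\alpha$, then $|\alpha_1| = n-p = j-1$, so the realizing permutation $\pi = (1\oplus c(\pi_1)\oplus 1)\ominus 1\ominus \pi_2$ with $\pi_2^e = \alpha_1$ has $|\pi_2| = 2|\alpha_1|+1 = 2j-1$, whence $\pi(1) = |1\ominus\pi_2|+1 = 2j+1$; the degenerate case gives $\pi(1)=1=2\cdot 0+1$, matching the convention $j=0$.

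First I would treat $\alpha(1)\neq n$. Choose any snow leopard permutation $\pi$ with $\pi^e=\alpha$. If $\pi$ has a connector, then since $\alpha(1)\neq n$ the contrapositive of Lemma~\ref{lem:startwithn} forces that connector to correspond to the leftmost eligible connector, and we are done (with $\pi(1)=2j+1$ by the computation above). If $\pi$ has no connector, then $\pi(1)=1$, so by Theorem~\ref{thm:slpdecom} we have $\pi = 1\oplus c(\pi_1)\oplus 1$ and $\alpha = \pi^e = c(\pi_1^o)$; thus $c(\alpha)=\pi_1^o$ is an odd thread. Writing it in its unique form $c(\alpha)=(1\oplus c(\alpha_2)\oplus 1)\ominus\beta_2$ from \eqref{eqn:odecomp} and taking complements gives $\alpha = (1\ominus\alpha_2\ominus 1)\oplus c(\beta_2)$. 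Here $\alpha(1)\neq n$ forces $\beta_2\neq\emptyset$, so the leading block $1\ominus\alpha_2\ominus 1$ carries the smallest values of $\alpha$ and $c(\beta_2)$ carries the largest. One then checks that this layout admits \emph{no} eligible connector: an eligible connector at position $i$ requires $\{\alpha(1),\dots,\alpha(i)\}$ to be the $i$ largest values of $\alpha$ with $\alpha(i)$ their minimum, and a short case analysis on whether $i$ lands inside the leading block or inside $c(\beta_2)$ rules this out once $c(\beta_2)\neq\emptyset$. Hence $j=0$ and the chosen $\pi$ already realizes the (vacuous) leftmost eligible connector.

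It remains to treat $\alpha(1)=n$, where $c(\alpha)(1)=1$ makes position $1$ an eligible connector, so $p=1$ and $j=n$. Here I would build the realization by hand. Writing $\alpha = 1\ominus\delta$ with $\delta$ the pattern of $\alpha(2)\cdots\alpha(n)$, it suffices to show $\delta$ is an even thread, for then Theorem~\ref{thm:decomp} (with $\beta_1=\emptyset$ and $\alpha_1=\delta$) yields a snow leopard permutation whose connector corresponds to position $1$ of $\alpha$. To see $\delta$ is an even thread, start from any decomposition $\alpha=c(\beta_1)\ominus 1\ominus\alpha_1$; since $\alpha(1)=n$ forces $\beta_1(1)=1$, uniqueness of the odd-thread decomposition gives $\beta_1 = 1\oplus c(\alpha_2)\oplus 1$ for an even thread $\alpha_2$, so $\alpha = 1\ominus\alpha_2\ominus 1\ominus 1\ominus\alpha_1$ and $\delta = \alpha_2\ominus 1\ominus 1\ominus\alpha_1 = \alpha_2\ominus 1\ominus(1\ominus\alpha_1)$. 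Since $\emptyset$, $\alpha_1$, and $\alpha_2$ are even threads, applying Proposition~\ref{prop:eveneven} once shows $1\ominus\alpha_1 = \emptyset\ominus 1\ominus\alpha_1$ is an even thread, and applying it again to $\alpha_2$ and $1\ominus\alpha_1$ shows $\delta$ is an even thread, finishing this case.

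The step I expect to be most delicate is the ``no connector'' subcase of $\alpha(1)\neq n$: there one must rule out \emph{every} eligible connector rather than merely exhibit a decomposition, and this is exactly where the explicit block form $\alpha = (1\ominus\alpha_2\ominus 1)\oplus c(\beta_2)$ does the real work. A secondary nuisance throughout is the bookkeeping of the degenerate objects $\emptyset$ and $@$ (for instance $\beta_1=\emptyset$ or $\alpha_2=@$, where a block such as $1\ominus\alpha_2\ominus 1$ collapses to $1$): each such boundary combination must be verified against the conventions so that the reductions to Proposition~\ref{prop:eveneven} and the identity $\pi(1)=2j+1$ remain valid. These checks are routine but should be stated explicitly to keep the argument airtight.
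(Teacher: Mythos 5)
Your proposal is correct and takes essentially the same route as the paper's proof: Lemma \ref{lem:startwithn} reduces everything to the case $\alpha(1)=n$, where the same chain of deductions ($\beta_2=\emptyset$, hence $\beta_1 = 1\oplus c(\alpha_2)\oplus 1$, followed by two applications of Proposition \ref{prop:eveneven}) shows the tail of $\alpha$ is an even thread, realized by a snow leopard permutation of the form $1\ominus 1\ominus\pi'$. Your explicit handling of the connectorless subcase---showing that $\alpha=(1\ominus\alpha_2\ominus 1)\oplus c(\beta_2)$ with $\beta_2\neq\emptyset$ admits no eligible connector, so the convention $j=0$ applies---is a boundary check the paper's proof leaves implicit rather than a genuine divergence, and your verification of the arithmetic $\pi(1)=2j+1$ is likewise a spelled-out version of what the paper takes as read.
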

\begin{proof}
Suppose $\alpha$ is an even thread of length $n$, and $\pi$ is a snow leopard permutation with $\pi^e = \alpha$.
If the connector of $\pi$ corresponds with the leftmost eligible connector of $\alpha$, then the result holds.
If not, then by Lemma \ref{lem:startwithn} we have $\alpha(1) = n$.
By \eqref{eqn:edecomp} there is an even thread $\alpha$ and an odd thread $\beta$ such that
\begin{equation}
\label{eqn:alphadecomp}
\alpha = c(\beta_1) \ominus 1 \ominus \alpha_1.
\end{equation}
We claim that $\alpha = 1 \ominus \alpha_4$ for some even thread $\alpha_4$.

Our claim follows from \eqref{eqn:alphadecomp} if $\beta_1 = \emptyset$, so suppose $\beta_1 \neq \emptyset$.
By \eqref{eqn:odecomp} there is an odd thread $\beta_2$ and an even thread $\alpha_2$ with
$$\beta_1 = (1 \oplus c(\alpha_2) \oplus 1) \ominus \beta_2,$$
which implies
$$\alpha = ((1 \ominus \alpha_2 \ominus 1) \oplus c(\beta_2)) \ominus 1 \ominus \alpha_1.$$
Since $\alpha(1) = n$, we must have $\beta_2 = \emptyset$.
Furthermore, $1 \ominus \alpha_1 = \emptyset \ominus 1 \ominus \alpha_1$ is an even thread by Proposition \ref{prop:eveneven};  writing $\alpha_3$ to denote this thread, we have
$$\alpha = 1 \ominus \alpha_2 \ominus 1 \ominus \alpha_3.$$
Using Proposition \ref{prop:eveneven} again, we see that $\alpha_2 \ominus 1 \ominus \alpha_3$ is also an even thread, which we denote by $\alpha_4$, and our claim follows.

To complete the proof, note that since $\alpha_4$ is an even thread, there is a snow leopard permutation $\pi$ with $\pi^e = \alpha_4$.
By Theorem \ref{thm:ominusslp}, $1\ominus 1 \ominus \pi$ is a snow leopard permutation with even thread $\alpha$, whose connector corresponds to $\alpha(1)$, the leftmost eligible connector of $\alpha$.
\end{proof}

Theorem \ref{thm:ekdecomp} tells us that while a given even thread $\alpha$ may have several decompositions as in \eqref{eqn:edecomp}, it always has such a decomposition in which the 1 in \eqref{eqn:edecomp} corresponds to the leftmost eligible connector in $\alpha$.
This decomposition is clearly unique, so we will call it the {\em leftmost decomposition} of $\alpha$.

\section{Even Threads, Odd Threads, and Restricted Catalan Paths}
\label{sec:eopaths}

Now that we have recursive decompositions of the even and odd threads, and we have investigated the extent to which these decompositions are unique for a given thread, it's natural to consider the numbers of these threads of each length.
In Table \ref{table:knotnums}
\begin{table}[ht]
\centering
\begin{tabular}{c|c|c|c|c|c|c|c|c}
$n$ & $-1$ & 0 & 1 & 2 & 3 & 4 & 5 & 6 \\
\hline
$|ET_n|$ & 1 & 1 & 1 & 2 & 6 & 17 & 46 & 128 \\
\hline
$|OT_n|$ & 0 & 1 & 1 & 2 & 4 & 9 & 23 & 63 \\
\end{tabular}
\caption{The number of even threads and odd threads of length six or less.}
\label{table:knotnums}
\end{table}
we have the number of even and odd threads of length six or less.

It turns out that these numbers also count some more familiar combinatorial objects.
To describe these objects, recall that a {\em Catalan path} of length $n$ is a lattice path from $(0,0)$ to $(n,n)$ consisting of unit North $(0,1)$ and East $(1,0)$ steps which does not pass below the line $y = x$.
For convenience, we sometimes write a Catalan path as a sequence of $N$s and $E$s, with $N$ denoting a North step and $E$ denoting an East step.
For instance, the five Catalan paths of length three are $NNNEEE$, $NNENEE$, $NNEENE$, $NENNEE$, and $NENENE$.
If $p$ is a Catalan path of length $n$, then we will write $p^r$ to denote the {\em reverse} of $p$, which is the path obtained by reflecting $p$ over the line $x+y=n$.
In terms of $N$s and $E$s, reversing a Catalan path is equivalent to reversing the corresponding string and then exchanging $N$s and $E$s.
For example, if $p = NENNEE$ then $p^r = NNEENE$.

\cite{DyckPaths} show that the number of Catalan paths of length $n \ge 1$ with $k$ occurrences of $NEEN$ (in consecutive entries) is given by 
$$a_{n,k} = \frac{1}{n} \binom{n}{k} \sum_{j=k}^{\lfloor \frac{n-1}{2} \rfloor}(-1)^{j-k}\binom{n-k}{j-k}\binom{2n-3j}{n-j+1}.$$
Notice that when $k = 0$ the first few terms of this sequence (beginning with $n = 1$) are $1,2,4,9$, and $23$, which suggests that $|OT_n| = a_{n,0}$.
On the other hand, the values of $|ET_n|$ appear to match OEIS sequence A102403, whose $n$th term is the number of Catalan paths of length $n$ with no ascent of length exactly two.  
With this in mind, we introduce some notation for the sets of these paths of a given length.

\begin{definition}
For each $n \geq 0$, we write $ENNE_n$ to denote the set of Catalan paths of length $n$ which have no ascent of length exactly two.  
Similarly, we write $NEEN_n$ to denote the set of Catalan paths of length $n$ which do not contain the four consecutive steps $NEEN$. 
\end{definition}

Our data suggest that $|ET_n| = |ENNE_{n+1}|$ and $|OT_n| = |NEEN_n|$ for each nonnegative integer $n$.
To prove these results, we introduce decompositions of the paths in $ENNE_n$ and $NEEN_n$ which mirror our decompositions of the even and odd threads, respectively.

\begin{theorem}
\label{thm:Andecomp}
Suppose $n$ is a positive integer.
For each $p \in ENNE_n$, there are unique nonnegative integers $k$ and $l$, and unique Catalan paths $a \in ENNE_k$ and $b \in NEEN_l$, such that $n=k+l+1$, $p = a N b^r E$, and $b$ does not end with $NE$.  
Conversely, for every $a \in ENNE_k$ and $b \in NEEN_l$ such that $b$ does not end with $NE$, the path $a N b^r E$ is in $ENNE_n$, where $n=k+l+1$.
\end{theorem}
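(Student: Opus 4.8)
The statement is a bijective decomposition for $ENNE_n$, and the natural strategy is to mirror the recursive decomposition of even threads in Theorem~\ref{thm:decomp}, where $\alpha = c(\beta_1) \ominus 1 \ominus \alpha_1$. The plan is to read off, from a path $p \in ENNE_n$, a distinguished East step that plays the role of the middle $1$, split $p$ there, and recognize the two pieces as an $ENNE_k$ path and (the reverse of) a $NEEN_l$ path. Since $p$ ends at $(n,n)$ and stays weakly above $y=x$, the last step must be $E$; I would factor that final $E$ off and then find the $N$ step that matches it, i.e. write $p = a\,N\,c\,E$ where $c$ is the portion strictly between the last $E$ and the $N$ that opens the final ``arch.'' Setting $b = c^r$ gives the claimed form $p = a\,N\,b^r\,E$. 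The condition that $b$ does not end with $NE$ should be the translation of the constraint that this middle $E$ is really the \emph{leftmost} eligible connector (cf.\ the leftmost decomposition from Theorem~\ref{thm:ekdecomp})---it pins down $k$ and $l$ uniquely.

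**The key verifications.** Once the factorization $p = a\,N\,b^r\,E$ is defined, there are three things to check, and I would do them in this order. First, \emph{well-definedness and membership}: show that both $a$ and $b^r$ are themselves lattice paths that stay weakly above the diagonal, so $a$ is a genuine Catalan path of some length $k$ and $b$ a Catalan path of some length $l$ with $n = k+l+1$. This is a height-bookkeeping argument using that the distinguished $N$ returns the path to the diagonal. Second, \emph{the avoidance conditions transfer correctly}: I must verify that $a$ has no ascent of length exactly two (so $a \in ENNE_k$), and that $b$ avoids $NEEN$ (so $b \in NEEN_l$). Because reversal swaps $N \leftrightarrow E$ and reverses order, an occurrence of $NEEN$ in $b$ corresponds to an occurrence of $NEEN$ in $b^r$ read backwards---here I would carefully track how the ``no ascent of length exactly two'' condition on $p$ restricts the interior, especially near the two boundary steps $N$ and $E$ I am peeling off, since those are exactly where a forbidden ascent could be created or destroyed. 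Third, the \emph{converse}: given $a \in ENNE_k$ and $b \in NEEN_l$ with $b$ not ending in $NE$, show $a\,N\,b^r\,E \in ENNE_n$, i.e.\ that reassembling creates no ascent of length exactly two anywhere, including at the three junctions $a\mid N$, $N \mid b^r$, and $b^r \mid E$.

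**The main obstacle.** I expect the real difficulty to lie in matching the boundary condition ``$b$ does not end with $NE$'' to the avoidance constraints at the seams, and in proving uniqueness of $k$ and $l$. The condition ``$b$ does not end with $NE$'' is equivalent (after reversal) to ``$b^r$ does not begin with $NE$,'' which controls the interaction between $b^r$ and the $N$ step immediately preceding it, and it is precisely the hypothesis that prevents an alternative, non-leftmost split. I would argue uniqueness by showing that the distinguished $N$-step is forced: it must be the one opening the final arch that touches the diagonal, and the ``$b$ does not end with $NE$'' clause rules out the spurious factorizations (the path analogue of the eligible connectors that do \emph{not} arise from a decomposition, as in the $354621$ example). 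The most delicate case analysis will be checking the ascent-length condition at the junction $b^r \mid E$: the string ends $\ldots (b^r) E$, and I must confirm that the forbidden pattern ``isolated $NN$ ascent'' cannot straddle this boundary---this is exactly where the hypothesis on the tail of $b$ is consumed. Once these seam arguments are in place, the bijectivity follows formally, since the forward map and the reassembly map are mutually inverse by construction.
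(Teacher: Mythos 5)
Your plan is essentially the paper's proof: split $p$ at its last return to the diagonal before $(n,n)$, so that $p = a\,N\,b^r\,E$ with $a$ the prefix ending at $(k,k)$ and $N b^r E$ the final elevated arch, then transfer the avoidance conditions across the seams, with the converse handled by the same seam analysis in reverse. Two of your diagnoses are off, though neither would derail the argument if you carried it out. First, the clause ``$b$ does not end with $NE$'' plays no role in uniqueness: in any factorization $p = a\,N\,b^r\,E$ with $a$ and $b$ Catalan paths, $a$ necessarily ends on the diagonal at $(k,k)$ and $N b^r E$ stays strictly above it until the end, so the split point is forced to be the last diagonal return --- uniqueness is automatic, and there is no path analogue of the spurious eligible connectors in the $354621$ example. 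The $NE$ condition is instead purely the translation of the pattern constraint at the seam $N \mid b^r$: $b$ ends with $NE$ iff $b^r$ begins with $NE$ iff $N b^r E$ begins $NNE$, which (since $a$, being a nonempty Catalan path, ends in $E$, or else is empty) is exactly an ascent of length two in $p$. Second, and relatedly, your ``most delicate case'' is misplaced: the junction $b^r \mid E$ is harmless, because every nonempty Catalan path ends in $E$, so $b^r$ ends in $E$ and the final step of $p$ merely extends a plateau and can never complete a forbidden ascent; the only seam where the hypothesis on the tail of $b$ is consumed is $N \mid b^r$, which you in fact identify correctly in your second paragraph (``controls the interaction between $b^r$ and the $N$ step immediately preceding it''). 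With those two points straightened out, your remaining checks are the paper's: maximal ascents of $a$ are maximal in $p$ because $a$ ends in $E$, and an occurrence of $NEEN$ in $b$ corresponds to an occurrence of $ENNE$ in $b^r$, i.e., an interior ascent of length exactly two in $N b^r E$.
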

\begin{proof}
($\Rightarrow$) Suppose $p \in ENNE_n$.  
Since $p$ is a Catalan path which begins at $(0,0)$ and ends at $(n,n)$, it must return to the diagonal $y=x$ at least once.  
Suppose the last time $p$ touches the diagonal before $(n,n)$ is at $(k,k)$.  
Let $a$ denote the subpath of $p$ from $(0,0)$ to $(k,k)$ and let $b$ denote the subpath of $p$ from $(k, k+1)$ to $(n-1, n)$.  
Then $a$ and $b$ are Catalan paths, and $|a| = k$ and $|b| = n-k-1$, so $|a| + |b| = n-1$.  
Furthermore, if $c$ and $d$ are any Catalan paths with $|c| = k$ and $|d| = n-k-1$ then the Catalan path $c N d^r E$ has its last return to the diagonal before $(n,n)$ at $(k,k)$.
Therefore, $a$, $b$, $k$, and $l = n-k-1$ are uniquely determined by $p$, and it remains only to show that $a \in ENNE_k$ and $b^r \in NEEN_{n-k-1}$.

Since $a$ is a subpath of $p$ and $p$ contains no ascent of length exactly two, neither does $a$.   
Thus $a \in ENNE_k$.  

To show $b^r \in NEEN_{n-k-1}$, first notice that since $NbE \in ENNE_{n-k}$, if $b$ contains an ascent of length exactly two then this ascent must start the path.
Equivalently, if $b^r$ contains a plateau of length exactly two, then this plateau must end the path.
Thus, $b^r$ does not contain $NEEN$, so $b^r \in NEEN_{n-k-1}.$
In addition, $b^r$ cannot start with $NE$, for otherwise the $N b$ portion of $p$ would contain an ascent of length exactly two.  
Thus, $b^r$ does not end with an $NE$.

($\Leftarrow$) 
Suppose $a \in ENNE_k$ and $b \in NEEN_l$, and $b$ does not end with $NE$.  
We want to show that the path $a N b^r E$ does not contain an ascent of length exactly two.  
Since $a \in ENNE_k$, if $a N b^r E$ did contain such an ascent, it would have to occur in the $N b^r E$ portion of the path.  
Additionally, $b$ contains no $NEEN$, so $b^r$ contains no $ENNE$.  
Thus, the problematic ascent would have to begin with the $N$ of $N b^r E$.  
However, we assumed that $b$ does not end with $NE$, so $b^r$ does not begin with $NE$.  
Thus, there is no problematic ascent in $a N b^r E$, and this path of length $n=k+l+1$ will be in $ENNE_n$.
\end{proof}

\begin{theorem}
\label{thm:Bndecomp}
Suppose $n$ is a positive integer.
For each $p \in NEEN_n$ there are unique nonnegative integers $k$ and $l$, and unique Catalan paths $a \in ENNE_k$ and $b \in NEEN_l$, such that $n=k+l+1$ and $p = a^r N b E$.
Conversely, for every $a \in ENNE_k$ and $b \in NEEN_l$, the path $a^r N b E$ is in $NEEN_n$, where $n=k+l+1$.
\end{theorem}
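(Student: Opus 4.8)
The plan is to mirror the proof of Theorem~\ref{thm:Andecomp}, decomposing a $NEEN$-avoiding path according to its \emph{first} return to the diagonal rather than its last. First I would take $p \in NEEN_n$ and locate the first point $(k,k)$, with $k \ge 1$, at which $p$ touches the diagonal after leaving the origin. Since $p$ begins with $N$ and ends with $E$, the portion of $p$ from $(0,0)$ to $(k,k)$ has the form $N c E$ for some Catalan path $c$ of length $k-1$ that stays on or above the diagonal, and the portion from $(k,k)$ to $(n,n)$ is itself a Catalan path. I would set $b$ equal to this latter subpath (so $|b| = n-k =: l$) and set $a = c^r$, so that $a$ has length $k-1$; relabeling $k-1$ as $k$ gives the relation $n = k + l + 1$ and the factorization $p = a^r N b E$, since $N c E = N (a^r) E$. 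The integers $k$, $l$ and the paths $a$, $b$ are clearly determined by the location of the first return.

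The crux is verifying the avoidance membership on each factor, and this is where the structure of the first-return decomposition does the work. For the reverse direction, suppose $a \in ENNE_k$ and $b \in NEEN_l$ and form $q = a^r N b E$. Any occurrence of $NEEN$ in $q$ must be checked against the three junctions: entirely inside $a^r$, entirely inside $b$, or straddling one of the boundaries. An $NEEN$ inside $a^r$ would correspond to an $ENNE$ inside $a$ (since reversal swaps $N \leftrightarrow E$ and reverses order, sending $NEEN$ to $NEEN$ read backwards, i.e.\ an ascent of length two in $a$), contradicting $a \in ENNE_k$; here I must be careful to state the reversal correspondence precisely, namely that $a^r$ contains $NEEN$ iff $a$ contains $ENNE$, which is the same observation used in Theorem~\ref{thm:Andecomp}. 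An $NEEN$ inside $b$ is excluded since $b \in NEEN_l$. The straddling cases are handled by noting that the single $N$ inserted before $b$ is immediately preceded by the trailing steps of $a^r$ and the single trailing $E$ is at the very end, so no new four-step window of the form $NEEN$ can be completed across the inserted $N$ or the final $E$; in particular the final $E$ cannot start an $NEEN$ pattern, and the inserted $N$ together with the start of $b$ (which begins with $N$, as $b$ is Catalan of positive length, or is empty) cannot produce $NEEN$ either.

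The main obstacle I anticipate is bookkeeping at the boundary steps in both directions, especially the degenerate cases where $a$ or $b$ is empty (length $0$) or where the first return occurs immediately (so $c = \emptyset$ and $a = \emptyset$), and ensuring the four-step window arguments account for windows that span the inserted $N$ and $E$. Unlike Theorem~\ref{thm:Andecomp}, no extra tail condition (such as ``$b$ does not end with $NE$'') is needed here, which I would confirm by checking that the inserted $N$ never creates a forbidden $NEEN$ regardless of how $a^r$ ends; this asymmetry with the previous theorem is worth flagging explicitly so the reader is not surprised by the absence of a side condition. Once the avoidance checks and the uniqueness of the first return are in place, both inclusions follow, establishing the bijective correspondence $p \leftrightarrow (a,b)$ claimed in the statement.
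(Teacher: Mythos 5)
There is a genuine error at the very start of your forward direction: the factorization $p = a^r N b E$ places the arch $NbE$ at the \emph{end} of the path, so it corresponds to the \emph{last} return to the diagonal, exactly as in Theorem~\ref{thm:Andecomp} --- not to the first return. Your first-return decomposition writes $p = (NcE)\,b'$ where $NcE$ is the first arch and $b'$ is the remainder, i.e.\ $p = N\,a^r\,E\,b'$ after setting $a = c^r$; the identity $NcE = N(a^r)E$ is true but does not let you permute the factors into $a^r N b' E$. Concretely, take $p = NENNEE \in NEEN_3$: the first return is at $(1,1)$, so your recipe gives $a = \emptyset$ and $b = NNEE$, but then $a^r N b E = NNNEEE \neq p$. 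The correct decomposition (the paper's) cuts at the last return before $(n,n)$: here $a^r = NE$ is the prefix up to $(1,1)$ and $b = NE$ is the interior of the final arch, giving $a^r N b E = NE\,N\,NE\,E = p$. Because your split is in the wrong place, your uniqueness claim (``determined by the location of the first return'') and your forward-direction membership checks are attached to the wrong pieces; moreover you never actually verify $a \in ENNE_k$ in the forward direction, which requires the argument that an ascent of length exactly two in $a^r$ could only occur at its beginning (since $a^r$ avoids $ENNE$), forcing $a$, hence $p$, to contain $NEEN$ --- a contradiction.

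A secondary problem is your claim, in the converse direction, that the inserted $N$ ``never creates a forbidden $NEEN$ regardless of how $a^r$ ends.'' That is false as stated: if $a^r$ ends with $NEE$, appending the inserted $N$ completes an occurrence of $NEEN$ spanning the junction. What rules this out is precisely the hypothesis $a \in ENNE_k$: since $a$ has no ascent of length exactly two, $a^r$ has no plateau of length exactly two, and in particular cannot end in $NEE$, so $a^r N$ is $NEEN$-free. So the absence of a side condition in Theorem~\ref{thm:Bndecomp} (unlike the ``$b$ does not end with $NE$'' condition in Theorem~\ref{thm:Andecomp}) is not automatic; it is bought by the $ENNE$-avoidance of $a$, and your write-up should say so rather than suggest the junction is safe unconditionally. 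Your treatment of the other straddling windows and of occurrences interior to $a^r$ or $b$ is fine and matches the paper's reasoning.
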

\begin{proof}
($\Rightarrow$) 
Suppose $p \in NEEN_n$.  
As in the proof of Theorem~\ref{thm:Andecomp}, let $a$ denote the subpath of $p$ from $(0,0)$ to $(k,k)$ and let $b$ denote the subpath of $p$ from $(k, k+1)$ to $(n-1,1)$, where $(k,k)$ is the point at which $p$ last touches the diagonal before $(n,n)$.  
As before, for any Catalan paths $c$ and $d$ with $|c| = k$ and $|d| = n-k-1$ the Catalan path $c^r N d E$ has its last return to the diagonal at $(k,k)$, so $a$, $b$, $k$, and $l = n-k-1$ are uniquely determined by $p$.

Since $NbE$ is a path in $NEEN_{n-k}$, we have $b \in NEEN_{n-k-1}$.  
Similarly, since $a$ does not contain $NEEN$, the path $a^r$ does not contain $ENNE$.
Therefore, if $a^r$ contains an ascent of length exactly two, then this ascent must come at the beginning of the path, in which case $a^r$ begins with $NNE$.  
In this case, $a$ would end with $NEE$, so $p$ would contain $NEEN$. 
This contradiction shows $a^r \in ENNE_k$.
 
($\Leftarrow$) 
Suppose $a \in ENNE_k$ and $b \in NEEN_l$, and let $p = a^r N b E$. 
We want to show that $p$ does not contain $NEEN$.  
Since there is no $NEEN$ in $b$, if $p$ contains $NEEN$, then it must fall in the $a^r N$ portion of $p$.  
However, $a \in ENNE_k$ so $a$ has no ascent of length exactly two, meaning $a^r$ has no plateau of length exactly two.  
Thus, $a^r N$ also cannot contain $NEEN$, so $p \in NEEN_n$, where $n = |a| + |b| + 1 = k+l+1$.
\end{proof}

Now that we can decompose threads and Catalan paths in similar ways, we can recursively define functions from  $ENNE_{n+1}$ to $ET_n$, and from $NEEN_n$ to $OT_n$;  these functions will turn out to be bijections.

\begin{theorem}
\label{thm:HJ}
For each nonnegative integer $n$, there exist unique functions
$$H : ENNE_{n+1} \rightarrow ET_n$$
$$J : NEEN_n \rightarrow OT_n$$
such that $H(\emptyset) = @$, $J(\emptyset) = \emptyset$, if $p = a N b^r E$ for Catalan paths $a$ and $b$ then 
\begin{equation}
\label{eqn:Hrecurrence}
H(p) = c(J(b)) \ominus 1 \ominus H(a),
\end{equation}
and if $p = a^r N b E$ for Catalan paths $a$ and $b$ then 
\begin{equation}
\label{eqn:Jrecurrence}
J(p) = (1 \oplus c(H(a)) \oplus1) \ominus J(b).
\end{equation}
\end{theorem}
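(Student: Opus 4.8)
The plan is to define $H$ and $J$ together by strong induction on the length of the path, using the unique decompositions supplied by Theorems~\ref{thm:Andecomp} and~\ref{thm:Bndecomp}, and to read off the codomain claim and uniqueness directly from that recursion. The base cases are the empty path: the hypotheses force $H(\emptyset) = @ \in ET_{-1}$ and $J(\emptyset) = \emptyset \in OT_0$. For a nonempty path the key preliminary observation is that the decomposition named in each recurrence is \emph{the} decomposition of Theorem~\ref{thm:Andecomp} or~\ref{thm:Bndecomp}: by the last-return characterization in the proofs of those theorems, a Catalan path $p$ can be written as $a N b^r E$ (resp.\ $a^r N b E$) with $a,b$ Catalan paths in only one way, since cutting at any earlier return to the diagonal would force the middle subpath below its shifted diagonal. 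Hence each of $H(p)$ and $J(p)$ is assigned a value by exactly one instance of the recurrence, so the definitions are unambiguous; moreover the two theorems produce precisely one $ENNE$ factor and one $NEEN$ factor, so the mutual recursion of $H$ on $J$ and of $J$ on $H$ stays within the correct path families.

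Next I would verify the length bookkeeping and that the outputs are genuine threads. If $p \in ENNE_{n+1}$ decomposes as $a N b^r E$ with $a \in ENNE_k$ and $b \in NEEN_l$, then $k+l = n$; by the inductive hypothesis $H(a) \in ET_{k-1}$ and $J(b) \in OT_l$, so $c(J(b)) \ominus 1 \ominus H(a)$ has length $l + 1 + (k-1) = n$. This is exactly the shape $c(\beta_1) \ominus 1 \ominus \alpha_1$ appearing in the even-thread decomposition of Theorem~\ref{thm:decomp}, with $\beta_1 = J(b)$ an odd thread and $\alpha_1 = H(a)$ an even thread, so the $\Leftarrow$ direction of Theorem~\ref{thm:decomp} gives $H(p) \in ET_n$. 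Symmetrically, if $p \in NEEN_n$ decomposes as $a^r N b E$ with $a \in ENNE_k$, $b \in NEEN_l$ and $k+l+1 = n$, then $H(a) \in ET_{k-1}$ and $J(b) \in OT_l$, so $(1 \oplus c(H(a)) \oplus 1) \ominus J(b)$ has length $(k+1) + l = n$ and is of the form $(1 \oplus c(\alpha_2) \oplus 1) \ominus \beta_2$, whence Theorem~\ref{thm:decomp} gives $J(p) \in OT_n$. In both recurrences $a$ and $b$ are strictly shorter than $p$, so the simultaneous induction is well-founded.

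Uniqueness then comes for free: any pair of functions satisfying the stated properties agrees with $(H,J)$ on the empty path by the base conditions, and on each nonempty path its value is forced by the relevant recurrence applied to the unique decomposition, so a routine induction on length shows any such pair coincides with $(H,J)$.

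I expect the only genuinely delicate point to be the well-definedness remark in the first paragraph — confirming that the representation of a nonempty path in the form demanded by each recurrence is unique, so that the recursive clauses do not conflict. Once that is settled, the remainder is careful but routine tracking of lengths and of thread types against Theorem~\ref{thm:decomp}.
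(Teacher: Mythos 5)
Your proposal is correct and takes essentially the same approach as the paper: both arguments define $H$ and $J$ by a simultaneous induction on path length, using the uniqueness of the last-return decompositions $aNb^rE$ and $a^rNbE$ (Theorems \ref{thm:Andecomp} and \ref{thm:Bndecomp}) for well-definedness and uniqueness, and then applying the reverse direction of Theorem \ref{thm:decomp} to conclude that the outputs land in $ET_n$ and $OT_n$. The paper's version is merely terser, dismissing the well-definedness and length bookkeeping as ``simple computations'' that you have spelled out explicitly.
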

\begin{proof}
The result is clear for $n = 0$, so suppose $n \ge 1$.

Since every Catalan path $p$ of positive length has unique decompositions into the forms $a N b^r E$ and $a^r N b E$, some simple computations with lengths shows that there exist unique functions $H : ENNE_{n+1} \rightarrow S_n$ and $J : NEEN_n \rightarrow S_n$ satisfying \eqref{eqn:Hrecurrence} and \eqref{eqn:Jrecurrence}.
So it's sufficient to show that if $p \in ENNE_{n+1}$ then $H(p) \in ET_n$ and if $p \in NEEN_n$ then $J(p) \in OT_n$.

The result is true by construction for $n = 0$, and it's routine to check that $H(NE) = \emptyset \in ET_0$ and $J(NE) = 1 \in OT_1$, so suppose $n \ge 2$ and the result holds for all paths of length $n-1$ or less.
If $p \in ENNE_n$ then by Theorem \ref{thm:Andecomp} there are unique paths $a \in ENNE_k$ and $b \in NEEN_l$, such that $n=k+l+1$, $p = a N b^r E$, and $b$ does not end with $NE$.
By induction $H(a) \in ET_{k-1}$ and $J(b) \in OT_l$, so by \eqref{eqn:Hrecurrence} and \eqref{eqn:edecomp} the permutation $H(p)$ is in $ET_{n-1}$.
The proof that if $p \in NEEN_n$ then $J(p) \in OT_n$ is similar, using Theorem \ref{thm:Bndecomp}, along with \eqref{eqn:Jrecurrence} and \eqref{eqn:odecomp}.
\end{proof}

In Table \ref{table:HandJvalues}
\begin{table}[ht]
\centering
\begin{tabular}{c|c}
path $a$ & $H(a)$ \\
\hline
$\emptyset$ & $@$ \\
\hline
$NE$ & $\emptyset$ \\
\hline
$NENE$ & $1$ \\
\hline
$NENENE$ & $21$ \\
$NNNEEE$ & $12$ \\
\hline
$NNNNEEEE$ & $123$ \\
$NNNENEEE$ & $132$ \\
$NNNEENEE$ & $213$ \\
$NNNEEENE$ & $312$ \\
$NENNNEEE$ & $231$ \\
$NENENENE$ & $321$ \\
\end{tabular}
\hspace{70pt}
\begin{tabular}{c|c}
path $a$ & $J(a)$ \\
\hline
$\emptyset$ & $\emptyset$ \\
\hline
$NE$ & $1$ \\
\hline
$NENE$ & $12$ \\
$NNEE$ & $21$ \\
\hline
$NNNEEE$ & $321$ \\
$NNENEE$ & $312$ \\
$NENNEE$ & $231$ \\
$NENENE$ & $123$ \\
\end{tabular}
\caption{Values of $H(a)$ and $J(a)$ for small $|a|$.}
\label{table:HandJvalues}
\end{table}
we have the values of $H$ and $J$ on paths of length three or less.
This data suggests $H$ and $J$ are bijections;  to prove this, we construct their inverses.

\begin{theorem}
\label{thm:FG}
For each nonnegative integer $n$, there exist unique functions 
$$F : ET_n \to ENNE_{n+1}$$
$$G : OT_n \to NEEN_n$$
such that $F(@) = \emptyset$, $G(\emptyset) = \emptyset$, if $\pi = c(\beta) \ominus 1 \ominus \alpha$ for an even thread $\alpha$ and an odd thread $\beta$ is the leftmost decomposition of the even thread $\pi$, then
\begin{equation}
\label{eqn:Frecurrence}
F(\pi) = F(\alpha) N G(\beta)^r E,
\end{equation}
and if $\pi = (1 \oplus c(\alpha) \oplus 1) \ominus \beta$ for an even thread $\alpha$ and an odd thread $\beta$ then
\begin{equation}
\label{eqn:Grecurrence}
G(\pi) = F(\alpha)^r N G(\beta) E.
\end{equation}
\end{theorem}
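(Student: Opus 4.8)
The plan is to define $F$ and $G$ simultaneously by a single well-founded recursion and then check that their outputs land in the desired path sets. First I would observe that the two recurrences do determine $F$ and $G$ unambiguously: by the remark following Theorem~\ref{thm:decomp} every odd thread of positive length has a \emph{unique} decomposition $(1\oplus c(\alpha)\oplus 1)\ominus\beta$, and by Theorem~\ref{thm:ekdecomp} (and the remark after it) every even thread of nonnegative length has a unique \emph{leftmost} decomposition $c(\beta)\ominus 1\ominus\alpha$. Together with the base values $F(@)=\emptyset$ and $G(\emptyset)=\emptyset$, these yield unique functions into the words in the letters $N$ and $E$, provided the recursion terminates. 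To see that it does, I would index the induction by the length of the output word rather than by $n$: for $\pi\in ET_n$ the word $F(\pi)$ has length $n+1$, while for $\pi\in OT_n$ the word $G(\pi)$ has length $n$. A short length count on each decomposition then shows every call on the right of \eqref{eqn:Frecurrence} or \eqref{eqn:Grecurrence} produces a strictly shorter word; the only delicate instance is the degenerate leftmost decomposition with $\alpha=@$, where $F$ on an even thread of length $n$ calls $G$ on an odd thread of length $n$, yet even there the output word shrinks from $n+1$ to $n$. Hence $F$ and $G$ are well defined and unique, and it remains to prove $F(\pi)\in ENNE_{n+1}$ and $G(\pi)\in NEEN_n$.

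For $G$ this is immediate by induction. Writing the unique decomposition $\pi=(1\oplus c(\alpha)\oplus 1)\ominus\beta$ with $\alpha\in ET_k$ and $\beta\in OT_{n-k-2}$, the inductive hypothesis gives $F(\alpha)\in ENNE_{k+1}$ and $G(\beta)\in NEEN_{n-k-2}$, and then the converse direction of Theorem~\ref{thm:Bndecomp}, which carries no side condition, immediately yields $G(\pi)=F(\alpha)^r N G(\beta)E\in NEEN_n$.

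The map $F$ is the crux, since the converse half of Theorem~\ref{thm:Andecomp} applies to $F(\pi)=F(\alpha)N G(\beta)^rE$ only when the block $G(\beta)$ \emph{does not end in} $NE$; this side condition is exactly where the leftmost decomposition is needed. I would first record the elementary fact that $G(\beta)$ ends in $NE$ if and only if $\beta$ has the special form $1\oplus c(\gamma)\oplus1$: in the decomposition $\beta=(1\oplus c(\gamma)\oplus1)\ominus\beta'$ the recurrence gives $G(\beta)=F(\gamma)^r N G(\beta')E$, and since every Catalan path of positive length ends in $E$, the word $G(\beta')E$ ends in $EE$ whenever $\beta'\neq\emptyset$, so $G(\beta)$ ends in $NE$ precisely when $\beta'=\emptyset$ (the case $\beta=\emptyset$ being vacuous). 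Thus it suffices to show that in the leftmost decomposition $\pi=c(\beta)\ominus1\ominus\alpha$ the odd thread $\beta$ is never of this special form. Suppose it were, with $\beta$ nonempty; then $\beta(1)=1$, and since $c(\pi)=\beta\oplus1\oplus c(\alpha)$ (with $c(\pi)=\beta$ in the degenerate case $\alpha=@$), we get $c(\pi)(1)=\beta(1)=1$. Hence position $1$ is both a left-to-right maximum and a fixed point of $c(\pi)$, i.e.\ an eligible connector. As position $1$ is the leftmost possible, Theorem~\ref{thm:ekdecomp} forces the leftmost decomposition to place its $1$ there, making the leftmost $\beta$ empty; this contradicts the uniqueness of the leftmost decomposition. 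Therefore $G(\beta)$ does not end in $NE$, and Theorem~\ref{thm:Andecomp} gives $F(\pi)\in ENNE_{n+1}$, completing the induction.

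I expect this last paragraph to be the main obstacle: verifying that the leftmost decomposition supplies an odd thread $\beta$ whose image $G(\beta)$ avoids a trailing $NE$. Everything else is bookkeeping with lengths and the two decomposition theorems, but this step is what forces the use of the \emph{leftmost} decomposition, since an arbitrary decomposition of the even thread could produce a $\beta$ of the forbidden form $1\oplus c(\gamma)\oplus1$, and it is the reason Theorem~\ref{thm:ekdecomp} is invoked here.
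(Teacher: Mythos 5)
Your proposal is correct, and it follows the blueprint the paper itself intends: the paper's entire proof of Theorem \ref{thm:FG} is the single sentence ``This is similar to the proof of Theorem \ref{thm:HJ},'' meaning uniqueness from the unique decompositions (the unique decomposition \eqref{eqn:odecomp} of an odd thread, and the unique leftmost decomposition of an even thread from Theorem \ref{thm:ekdecomp}), followed by an induction placing the images in $ENNE_{n+1}$ and $NEEN_n$ via the converse halves of Theorems \ref{thm:Andecomp} and \ref{thm:Bndecomp}. What you do differently is to supply, correctly, the two details at which the analogy with Theorem \ref{thm:HJ} actually breaks. First, in Theorem \ref{thm:HJ} every recursive call is on a strictly shorter path, whereas here the degenerate leftmost decomposition with $\alpha = @$ makes $F$ on $ET_n$ call $G$ on $OT_n$; your induction on the length of the output word (equivalently, treating $G$ on $OT_n$ before $F$ on $ET_n$) is genuinely needed, and it parallels the ordering the paper is forced into only later, in the proof of Theorem \ref{thm:inverses} (``if $\alpha_1 = @$ then $|\beta_1| = n$, and by induction and our previous case\dots''). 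Second, the converse half of Theorem \ref{thm:Andecomp} carries the side condition that $b$ not end in $NE$, which in Theorem \ref{thm:HJ} comes for free from the forward decomposition but here must be proved. Your lemma that $G(\beta)$ ends in $NE$ exactly when $\beta = 1 \oplus c(\gamma) \oplus 1$, i.e.\ when $\beta(1) = 1$, combined with the observation that $\beta(1) = 1$ would make position $1$ of $\pi$ an eligible connector, so that by Theorem \ref{thm:ekdecomp} and the uniqueness of the leftmost decomposition one would have $\beta = \emptyset$, is sound in both the generic case ($c(\pi) = \beta \oplus 1 \oplus c(\alpha)$) and the degenerate case ($\alpha = @$, $c(\pi) = \beta$). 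The paper never writes this step out at the site of Theorem \ref{thm:FG}; the equivalent fact surfaces only inside the proof of Theorem \ref{thm:inverses}, in the contrapositive direction, via Lemma \ref{lem:startwithn}. So your route is the paper's route made complete: it isolates, as an explicit lemma, exactly the point where the word ``leftmost'' in the statement is doing its work.
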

\begin{proof}
This is similar to the proof of Theorem \ref{thm:HJ}.
\end{proof}

\begin{theorem}
\label{thm:inverses}
$F$ and $G$ are the inverse functions of $H$ and $J$, respectively.
\end{theorem}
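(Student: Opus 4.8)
The plan is to prove, by simultaneous strong induction on $n$, that all four compositions $H\circ F$, $F\circ H$, $J\circ G$, and $G\circ J$ are identity maps; the two relations $H\circ F=\mathrm{id}$ and $F\circ H=\mathrm{id}$ immediately give $F=H^{-1}$, and likewise $G=J^{-1}$. The base cases (the paths $\emptyset, NE$ and the threads $@, \emptyset, 1$) follow directly from the defining values in Theorems \ref{thm:HJ} and \ref{thm:FG} together with the tabulated small values in Table \ref{table:HandJvalues}. For the inductive step I would peel off one layer of each composition using the parallel recurrences \eqref{eqn:Hrecurrence}--\eqref{eqn:Jrecurrence} and \eqref{eqn:Frecurrence}--\eqref{eqn:Grecurrence}, reduce to strictly shorter threads and paths, and invoke the inductive hypothesis. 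The only substantive point is to check that the decomposition used by the outer map agrees with the one produced by the inner map.

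On the odd-thread/$NEEN$ side this agreement is automatic. Given $\beta\in OT_n$ with its unique decomposition \eqref{eqn:odecomp} $\beta=(1\oplus c(\alpha)\oplus 1)\ominus\beta'$, the path $G(\beta)=F(\alpha)^r\,N\,G(\beta')\,E$ has $F(\alpha)\in ENNE$ and $G(\beta')\in NEEN$ by the codomains of $F,G$, so the converse half of Theorem \ref{thm:Bndecomp} (which carries no side condition) recovers exactly $a=F(\alpha)$ and $b=G(\beta')$ as the last-return decomposition of $G(\beta)$; applying \eqref{eqn:Jrecurrence} and the hypotheses $H\circ F=\mathrm{id}$, $J\circ G=\mathrm{id}$ on shorter inputs returns $\beta$. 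In the reverse direction, for $p=a^rNbE\in NEEN_n$ the odd thread $J(p)=(1\oplus c(H(a))\oplus1)\ominus J(b)$ is already displayed in the form \eqref{eqn:odecomp}, and the uniqueness of that form forces $G$ to use this very decomposition, so $G(J(p))=p$ by induction.

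The even-thread/$ENNE$ side is where the one genuine obstacle lives, since an even thread admits many decompositions \eqref{eqn:edecomp} whereas $F$ uses only the \emph{leftmost} one (Theorem \ref{thm:ekdecomp}), and Theorem \ref{thm:Andecomp} imposes the side condition that $b$ not end in $NE$. The bridge I would establish is the equivalence that, for an odd thread $\beta$, the path $G(\beta)$ ends in $NE$ if and only if $\beta=1\oplus c(\alpha_2)\oplus1$ for some even thread $\alpha_2$ (equivalently $\beta_2=\emptyset$ in \eqref{eqn:odecomp}); this is a one-line consequence of \eqref{eqn:Grecurrence}, as the trailing pattern of $F(\alpha_2)^r N G(\beta_2) E$ is $NE$ exactly when $G(\beta_2)$ is empty. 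The complementary fact is that a valid decomposition $c(\beta_1)\ominus1\ominus\alpha_1$ of an even thread is the leftmost one if and only if $\beta_1$ is \emph{not} of the form $1\oplus c(\alpha_2)\oplus1$: the ``only if'' follows from the case analysis on whether $\alpha(1)$ is the largest entry (a $\beta_1$ of that shape makes $\pi(1)$ maximal, so position $1$ becomes an eligible connector and the given connector cannot be leftmost), while the ``if'' is precisely the content of the proof of Lemma \ref{lem:startwithn}, which shows that any non-leftmost connector forces $\beta_1=1\oplus c(\alpha_2)\oplus1$.

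With this bridge in hand the induction closes on the even side just as on the odd side. For $\pi\in ET_n$ with leftmost decomposition $\pi=c(\beta_1)\ominus1\ominus\alpha_1$, the bridge shows $G(\beta_1)$ does not end in $NE$, so the converse half of Theorem \ref{thm:Andecomp} guarantees that $F(\pi)=F(\alpha_1)\,N\,G(\beta_1)^r\,E$ decomposes back into $a=F(\alpha_1)$ and $b=G(\beta_1)$; then \eqref{eqn:Hrecurrence} and induction give $H(F(\pi))=c(J(G(\beta_1)))\ominus1\ominus H(F(\alpha_1))=\pi$. Conversely, for $p=aNb^rE\in ENNE_{n+1}$ the side condition says $b=G(J(b))$ does not end in $NE$, so by the bridge $J(b)$ is not of the form $1\oplus c(\alpha_2)\oplus1$, whence $H(p)=c(J(b))\ominus1\ominus H(a)$ is exactly the leftmost decomposition that $F$ uses, yielding $F(H(p))=p$. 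I expect the matching of the leftmost thread decomposition with the last-return path decomposition, packaged in the two equivalences above, to be the crux; the remainder is bookkeeping with lengths and the inductive hypothesis.
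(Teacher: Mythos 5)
Your proposal is correct and follows essentially the same route as the paper: the same simultaneous strong induction on all four compositions, with the odd side immediate from the uniqueness of the decompositions in \eqref{eqn:odecomp} and Theorem \ref{thm:Bndecomp}, and with your two ``bridge'' equivalences being precisely the inline argument the paper runs in its $F\circ H$ case (a non-leftmost decomposition forces $\beta_1 = 1\oplus c(\alpha_2)\oplus 1$ via the proof of Lemma \ref{lem:startwithn}, which in turn forces $b$ to end in $NE$ and $p = aNb^rE$ to contain an ascent of length exactly two, contradicting $p \in ENNE_n$). One bookkeeping caveat: your phrase ``reduce to strictly shorter threads and paths'' overlooks that the leftmost decomposition of $\alpha \in ET_n$ can have $\alpha_1 = @$ and hence $|\beta_1| = n$, so the $H\circ F$ case needs the identity $J\circ G = \mathrm{id}$ at the \emph{same} level $n$ rather than a shorter instance --- your ordering (odd side first within each level) absorbs this, and the paper flags it explicitly with ``by induction and our previous case.''
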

\begin{proof}
It's routine to check that $H(F(@)) = @$, $H(F(\emptyset)) = \emptyset$, $F(H(\emptyset)) = \emptyset$, $J(G(\emptyset)) = \emptyset$, and $G(J(\emptyset)) = \emptyset$, so the map $H\circ F$ is the identity on $ET_{-1}$ and $ET_0$, the map $F \circ H$ is the identity on $ENNE_0$, the map $J \circ G$ is the identity on $OT_0$, and the map $G \circ J$ is the identity on $NEEN_0$.
Now fix $n \ge 1$, and suppose by induction that for all $k < n$ we have that $H \circ F$ is the identity on $ET_k$, $F \circ H$ is the identity on $ENNE_k$, $J \circ G$ is the identity on $OT_k$, and $G \circ J$ is the identity on $NEEN_k$.
It turns out that the four facts we need to prove are not completely independent, so we start with $J \circ G$.

To show $J \circ G$ is the identity on $OT_n$, suppose we have $\beta \in OT_n$, and let $\beta = (1 \oplus c(\alpha_1) \oplus 1) \ominus \beta_1$ be the decomposition of $\beta$ from \eqref{eqn:odecomp}.
By \eqref{eqn:Grecurrence} we have 
$$G(\beta) = F(\alpha_1)^r N G(\beta_1) E.$$
Now when we decompose $G(\beta)$ in the form $a^r N b E$, we find $a = F(\alpha_1)$ and $b = G(\beta_1)$.
Therefore, by \eqref{eqn:Jrecurrence} we have
$$J(G(\beta)) = (1 \oplus c(H(F(\alpha_1))) \oplus 1) \ominus J(G(\beta_1)).$$
Since $|\beta_1| < n$ and $|\alpha_1| < n$, by induction we have
\begin{eqnarray*}
J(G(\beta)) &=& (1 \oplus c(\alpha_1) \oplus 1) \ominus \beta_1 \\
&=& \beta,
\end{eqnarray*}
as desired.

To show $H \circ F$ is the identity on $ET_n$, suppose we have $\alpha \in ET_n$, and let $\alpha = c(\beta_1) \ominus 1 \ominus \alpha_1$ be the leftmost decomposition of $\alpha$.
By \eqref{eqn:Frecurrence} we have
$$F(\alpha) = F(\alpha_1) N G(\beta_1)^r E.$$
Now when we decompose $F(\alpha)$ in the form $a N b^r E$, we find $a = F(\alpha_1)$ and $b = G(\beta_1)$.
Therefore, by \eqref{eqn:Hrecurrence} we have
$$H(F(\alpha)) = c(J(G(\beta_1))) \ominus 1 \ominus H(F(\alpha_1)).$$
Now if $\alpha_1 = @$ then $|\beta_1| = n$, and by induction and our previous case we have
\begin{eqnarray*}
H(F(\alpha)) &=& c(\beta_1) \ominus 1 \ominus \alpha_1 \\
&=& \alpha,
\end{eqnarray*}
as desired.
On the other hand, if $\alpha_1 \neq @$ then $|\alpha_1| < n$ and $|\beta_1| < n$, and the result follows by induction.

To show $F \circ H$ is the identity on $ENNE_n$, suppose we have $p \in ENNE_n$, and let $p = a N b^r E$ for Catalan paths $a$ and $b$.
By \eqref{eqn:Hrecurrence} we have
\begin{equation}
\label{eqn:Hstar}
H(p) = c(J(b)) \ominus 1 \ominus H(a).
\end{equation}
We claim this is the leftmost decomposition of $H(p)$.
To see this, first note that since $J(b)$ is an odd thread and $H(a)$ is an even thread, there are snow leopard permutations $\pi_1$ and $\pi_2$ such that $J(b) = \pi_1^o$ and $H(a) = \pi_2^e$.
Observe that
$$((1 \oplus c(\pi_1) \oplus 1) \ominus 1 \ominus \pi_2)^e = c(J(b)) \ominus 1 \ominus H(a),$$
and the connector in $(1 \oplus c(\pi_1) \oplus 1) \ominus 1 \ominus \pi_2$ corresponds with the 1 in $c(J(b)) \ominus 1 \ominus H(a)$.
Therefore, by Lemma \ref{lem:startwithn}, if $c(J(b)) \ominus 1 \ominus H(a)$ is not the leftmost decomposition of $H(p)$ then it begins with its largest entry.
This implies that $J(b)$ begins with 1.
Now by \eqref{eqn:Jrecurrence}, if $b = a_1^r N b_1 E$ then $J(b) = \emptyset$, so $b_1 = \emptyset$, and $b$ ends with $NE$.
This means $b^r$ begins with $NE$, and $p = a N b^r E$ has an ascent of length exactly two, contradicting the fact that $p \in ENNE_n$.

Since the right side of \eqref{eqn:Hstar} is the leftmost decomposition of $H(p)$, by \eqref{eqn:Frecurrence} we have
\begin{eqnarray*}
F(H(p)) &=& F(H(a)) N G(J(b))^r E \\
&=& a N b^r E \\
&=& p,
\end{eqnarray*}
by induction.

To show $G \circ J$ is the identity on $NEEN_n$, suppose we have $p \in NEEN_n$, and let $p = a^r N b E$ for Catalan paths $a$ and $b$.
By \eqref{eqn:Jrecurrence} we have
$$J(p) = (1 \oplus c(H(a)) \oplus 1) \ominus J(b).$$
Since $H(a)$ is an even thread, $J(b)$ is an odd thread, and the decomposition of an odd thread in \eqref{eqn:odecomp} is unique, by \eqref{eqn:Grecurrence} we have
$$G(J(p)) = F(H(a))^r N G(J(b)) E.$$
Now the result follows by induction.
\end{proof}

\section{Entangled Threads}
\label{sec:entangled}

For each $n \ge -1$, there is a bipartite graph whose vertices are the even threads of length $n$ and the odd threads of length $n+1$, in which two vertices are adjacent whenever they are entangled.
In Section \ref{sec:eopaths} we gave a partial answer to the question of how many vertices this graph has, by giving bijections between the even threads and the Catalan paths with no ascent of length exactly two, and between the odd threads and the Catalan paths with no $NEEN$.
Nevertheless, there are numerous other questions one might ask about this graph.
In this section we start to answer one of these questions, by characterizing the even threads of length $n-1$ which are entangled with the increasing permutation $12\cdots n$, and by characterizing the odd threads of length $n+1$ which are entangled with the decreasing permutation $n \cdots 2 1$.
Throughout we write $\boxslash_n$ (resp.~$\boxbslash_n$) to denote the increasing (resp.~decreasing) permutation $12\cdots n$ (resp.~$n\cdots 21$).

To state and prove our results, we first need some terminology.
Recall that an {\em involution} $\pi$ is a permutation $\pi$ with $\pi = \pi^{-1}$.
As \cite{G} and \cite{Egge3412} have noted, involutions which avoid the classical pattern $3412$ have a simple recursive structure:  a permutation $\pi$ is a $3412$-avoiding involution if and only if one of the following mutually exclusive conditions holds:  (i)  $\pi = \emptyset$, (ii)  $\pi = 1 \oplus \pi_1$ for a $3412$-avoiding involution $\pi_1$, or (iii) $\pi = (1 \ominus \pi_1 \ominus 1) \oplus \pi_2$ for $3412$-avoiding involutions $\pi_1$ and $\pi_2$.
Moreover, when $\pi$ satisfies (ii) or (iii), the $3412$-avoiding involutions $\pi_1$ and $\pi_2$ are uniquely determined.
Using these results, it is routine to show that the number of $3412$-avoiding involutions of length $n$ is the Motzkin number $M_n$, which may be defined by $M_0 = 1$ and $M_n = M_{n-1} + \sum_{k=2}^n M_{k-2} M_{n-k}$ for $n \ge 1$.
We will encounter another family of objects counted by the Motzkin numbers in Section \ref{sec:janusthreads}, and the interested reader can find still more such families of objects in the paper of \cite{DSMotzkin} and Exercise 6.38 of the book of \cite{StanleyVol2}.
As we show next, the even threads entangled with $\boxslash_n$ and the odd threads entangled with $\boxbslash_n$ are easy to describe in terms of $3412$-avoiding involutions.

\begin{theorem}
\label{thm:oddupevendown}
\begin{enumerate}
\item[{\upshape (i)}]
For any integer $n \ge 1$, the even threads entangled with the odd thread $\boxslash_n$ are the $3412$-avoiding involutions of length $n-1$.
\item[{\upshape (ii)}]
For any integer $n \ge 0$, the odd threads entangled with the even thread  $\boxbslash_n$ are the complements of the $3412$-avoiding involutions of length $n+1$.
\end{enumerate}
\end{theorem}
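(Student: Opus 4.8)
The plan is to prove part (ii) first and then obtain part (i) as a corollary, since complementation interchanges the increasing odd thread with the decreasing even thread through the decompositions of Theorem~\ref{thm:decomp}. Write $I_j$ for the set of $3412$-avoiding involutions of length $j$, with the conventions $I_{-1}=\{@\}$ and $I_0=\{\emptyset\}$; let $E_n$ be the set of even threads entangled with $\boxslash_n$ and $O_m$ the set of odd threads entangled with $\boxbslash_m$. Then (i) reads $E_n=I_{n-1}$ and (ii) reads $O_m=c(I_{m+1})$. As a preliminary, a short induction using the backward direction of Theorem~\ref{thm:decomp} confirms that $\boxslash_n$ is an odd thread and $\boxbslash_m$ is an even thread, so these sets have content.

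First I would prove the reduction $E_n=c(O_{n-2})$. Since $\boxslash_n$ is increasing, its unique decomposition \eqref{eqn:odecomp} is forced to be $\boxslash_n=(1\oplus c(\boxbslash_{n-2})\oplus 1)\ominus\emptyset$. Reading this through the entanglement correspondence established in the proof of Theorem~\ref{thm:decomp} --- where a snow leopard permutation with $\pi^o=\boxslash_n$ must factor as $\pi=(1\oplus c(\pi_1)\oplus 1)\ominus 1\ominus\pi_2$ with $\pi_1^e=\boxbslash_{n-2}$ and $\pi_2=@$, whence $\pi^e=c(\pi_1^o)$ --- shows that the even threads entangled with $\boxslash_n$ are exactly the complements of the odd threads entangled with $\boxbslash_{n-2}$. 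Granting (ii) at index $n-2$, this yields $E_n=c(c(I_{n-1}))=I_{n-1}$, which is (i).

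For (ii) I would turn $O_m$ into a recursion. Because $\boxbslash_m$ is decreasing, the only decompositions \eqref{eqn:edecomp} are $\boxbslash_m=c(\boxslash_k)\ominus 1\ominus\boxbslash_{m-k-1}$ for $0\le k\le m$, the endpoint $k=m$ using $\boxbslash_{-1}=@$. Pushing each through the entanglement correspondence gives
$$O_m=\bigcup_{k=0}^{m}\bigl\{(1\oplus c(\alpha)\oplus 1)\ominus\beta : \alpha\in E_k,\ \beta\in O_{m-k-1}\bigr\}.$$
The $k=0$ summand is $\{1\ominus\beta:\beta\in O_{m-1}\}$ (since $E_0=\{@\}$ and $1\oplus c(@)\oplus 1=1$), while substituting $E_k=c(O_{k-2})$ into the summands with $k\ge 1$ makes $c(\alpha)$ range over $O_{k-2}$ and produces the family $\{(1\oplus\gamma\oplus 1)\ominus\beta : \gamma\in O_a,\ \beta\in O_b,\ a+b=m-3\}$ (set $a=k-2$, $b=m-k-1$). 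I would then match this against $c(I_{m+1})$: applying $c$ to the three-case structure of $3412$-avoiding involutions recalled before the theorem sends $1\oplus\pi_1$ to $1\ominus c(\pi_1)$ and $(1\ominus\pi_1\ominus 1)\oplus\pi_2$ to $(1\oplus c(\pi_1)\oplus 1)\ominus c(\pi_2)$, so under the inductive hypothesis $O_j=c(I_{j+1})$ for $j<m$ the two families above coincide termwise with the two families describing $c(I_{m+1})$. An induction on $m$ with base cases $O_{-1}=\{\emptyset\}$ and $O_0=\{1\}$ then gives (ii).

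The part I expect to require the most care is the bookkeeping at the degenerate ends, not any conceptual difficulty. In particular, the index $k$ must be allowed to reach $k=m$: that summand, with $\boxbslash_{-1}=@$ and $\beta=\emptyset$, produces precisely the permutations $1\oplus c(\alpha)\oplus 1$ corresponding to $3412$-avoiding involutions of type (iii) whose second block is empty, and omitting it would drop elements such as $12\in O_1$. The argument must therefore track the antipermutation $@$ and the conventions $1\oplus @\oplus 1=1$, $1\ominus @=\emptyset$, and $c(@)=@$ consistently through the correspondence, and must check that the resulting degenerate snow leopard permutations (those with $\pi_1=@$ or $\pi_2=@$) are genuinely produced by Theorem~\ref{thm:slpdecom}. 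Once these boundary conventions are pinned down, the termwise comparison of the two recursions is routine.
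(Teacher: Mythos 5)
Your proposal is correct and takes essentially the same route as the paper's proof: you force $\pi_2=@$ to get $E_n=c(O_{n-2})$ exactly as the paper does in part (i), decompose snow leopard permutations over $\boxbslash_m=\boxbslash_k\ominus 1\ominus\boxbslash_{m-k-1}$ exactly as in part (ii), and match against the same three-case recursive structure of $3412$-avoiding involutions. The only difference is organizational---you decouple (i) as a standalone reduction and run the induction purely on the sets $O_m$, where the paper runs a simultaneous induction on both parts---and your bookkeeping at the degenerate ends ($k=m$, $\boxbslash_{-1}=@$, $1\ominus @=\emptyset$) correctly mirrors the paper's conventions.
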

\begin{proof}
The odd thread $\boxslash_1 = 1$ is only entangled with the even thread $\emptyset$, which is the only $3412$-avoiding involution of length 0.
Similarly, the even thread $\boxbslash_1 = 1$ is entangled with both $12$ and $21$, which are the complements of the $3412$-avoiding involutions of length $2$.
Therefore, the results holds for $n = 1$.
Now fix $n \ge 2$, and suppose the result holds for all $k < n$;  we argue by induction on $n$.

Suppose $\alpha$ is an even thread entangled with $\boxslash_n$.
Then there is a snow leopard permutation $\pi$ such that $\pi^e = \alpha$ and $\pi^o = \boxslash_n$.
By Theorem \ref{thm:slpdecom}, there are snow leopard permutations $\pi_1$ and $\pi_2$ such that $\pi = (1 \oplus c(\pi_1) \oplus 1) \ominus 1 \ominus \pi_2$.
Since $\pi^o = \boxslash_n$, we see that $\pi(1) = 1$, which means $\pi_2 = @$ and $\pi = 1 \oplus c(\pi_1) \oplus 1$.
Now $\pi_1^e = \boxbslash_{n-2}$, so by induction $c(\pi_1^o)$ is a $3412$-avoiding involution.
On the other hand, $\alpha = \pi^e = c(\pi_1^o)$, so $\alpha$ is also a $3412$-avoiding involution.

For the reverse inclusion, suppose $\alpha$ is a $3412$-avoiding involution of length $n-1$.
By induction $c(\alpha)$ is an odd thread entangled with $\boxbslash_{n-2}$, so there is a snow leopard permutation $\pi_1$ with $\pi_1^o = c(\alpha)$ and $\pi_1^e = \boxbslash_{n-2}$.
By Theorem \ref{thm:slpdecom}, the permutation $1 \oplus c(\pi_1) \oplus 1$ is a snow leopard permutation with $\pi^o = \boxslash_n$ and $\pi^e = \alpha$, so $\alpha$ is an even thread entangled with $\boxslash_n$.

Suppose $\beta$ is an odd thread entangled with $\boxbslash_n$.
Then there is a snow leopard permutation $\pi$ such that $\pi^o = \beta$ and $\pi^e = \boxbslash_n$.
By Theorem \ref{thm:slpdecom}, there are snow leopard permutations $\pi_1$ and $\pi_2$ such that $\pi = (1 \oplus c(\pi_1) \oplus 1) \ominus 1 \ominus \pi_2$.
In addition,
$$\beta = \pi^o = (1 \oplus c(\pi_1^e) \oplus 1) \ominus \pi_2^o$$
and
$$\boxbslash_n = \pi^e = c(\pi_1^o) \ominus 1 \ominus \pi_2^e,$$
which implies $\pi_1^o = \boxslash_k$ and $\pi_2^e = \boxbslash_{n-k-1}$ for some $k$ with $-1 \le k \le n-1$.
By induction and a previous case, $\pi_1^e$ and $c(\pi_2^o)$ are $3412$-avoiding involutions, so $c(\beta) = (1 \ominus \pi_1^e \ominus 1) \oplus c(\pi_2^o)$ is also a $3412$-avoiding involution.

For the reverse inclusion, suppose $c(\beta)$ is a $3412$-avoiding involution of length $n+1$.
Then there are $3412$-avoiding involutions $\alpha_1$ of length $k$ and $\beta_1$ of length $n-k-1$, where $-1 \le k \le n-1$, such that
$$c(\beta) = (1 \ominus \alpha_1 \ominus 1) \oplus \beta_1.$$
By induction and a previous case, $\alpha_1$ is an even thread entangled with $\boxslash_{k+1}$ and $c(\beta_1)$ is an odd thread entangled with $\boxbslash_{n-k-2}$.
Therefore, there are snow leopard permutations $\pi_1$ and $\pi_2$ with $\pi_1^e = \alpha_1$, $\pi_1^o = \boxslash_{k+1}$, $\pi_2^o = c(\beta_1)$, and $\pi_2^e = \boxbslash_{n-k-2}$.
Now $\pi = (1 \oplus c(\pi_1) \oplus 1) \ominus 1 \ominus \pi_2$ is a snow leopard permutation by Theorem \ref{thm:slpdecom}, and it has
\begin{eqnarray*}
\pi^o &=& (1 \oplus c(\pi_1^e) \oplus 1) \ominus \pi_2^o \\
&=& (1 \oplus c(\alpha_1) \oplus 1) \ominus c(\beta_1)
\end{eqnarray*}
and
\begin{eqnarray*}
\pi^e &=& c(\pi_1^o) \ominus 1 \ominus \pi_2^e \\
&=& \boxbslash_{k+1} \ominus 1 \ominus \boxbslash_{n-k-2} \\
&=& \boxbslash_n.
\end{eqnarray*}
Since $c(\pi^o) = (1 \ominus \alpha_1 \ominus 1) \oplus \beta_1 = c(\beta)$, we see that $\beta$ is an odd thread entangled with $\boxbslash_n$.
\end{proof}

By combining Theorem \ref{thm:oddupevendown} with Proposition \ref{prop:ominusoddknot}, we can characterize the even threads entangled with a much larger class of odd threads.

\begin{definition}
\label{defn:uplayered}
For $m \ge 1$ and any positive integers $l_1,\ldots, l_m$, we write $\boxslash_{l_1,\ldots, l_m}$ to denote the permutation $\boxslash_{l_1} \ominus \boxslash_{l_2} \ominus \cdots \ominus \boxslash_{l_m}$.
We sometimes call a permutation of the form $\boxslash_{l_1,\ldots, l_m}$ {\em up-layered}.
\end{definition}

\begin{corollary}
\label{cor:evenforlayeredodd}
For $m \ge 1$ and any positive integers $l_1,\ldots, l_m$, the permutation $\boxslash_{l_1,\ldots,l_m}$ is an odd thread, and the even threads entangled with it are the permutations of the form $\pi_1 \ominus 1 \ominus \pi_2 \ominus 1 \ominus \cdots \ominus 1 \ominus \pi_m$, where $\pi_j$ is a $3412$-avoiding involution of length $l_j-1$ for $1 \le j \le m$.
\end{corollary}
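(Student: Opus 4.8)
The plan is to prove both assertions at once by induction on $m$, leaning on the fact that Theorem~\ref{thm:oddupevendown}(i) and Proposition~\ref{prop:ominusoddknot} are each exact (``if and only if'') characterizations; this way the claim that $\boxslash_{l_1,\ldots,l_m}$ is an odd thread and the precise description of its entangled even threads come out together. First I would record the easy but necessary observation that $\ominus$ is associative, so that the iterated expression $\boxslash_{l_1} \ominus \cdots \ominus \boxslash_{l_m}$ is unambiguous and may be regrouped freely.

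For the base case $m = 1$, the permutation $\boxslash_{l_1}$ is an odd thread and, by Theorem~\ref{thm:oddupevendown}(i), the even threads entangled with it are exactly the $3412$-avoiding involutions of length $l_1 - 1$. Taking $\pi_1$ to be such an involution, this is precisely the claimed single-block form. The edge case $l_1 = 1$ corresponds to $\pi_1 = \emptyset$, which is a $3412$-avoiding involution, so nothing special is needed.

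For the inductive step I would write $\boxslash_{l_1,\ldots,l_m} = \boxslash_{l_1} \ominus \boxslash_{l_2,\ldots,l_m}$. By the base case $\boxslash_{l_1}$ is an odd thread, and by the induction hypothesis $\boxslash_{l_2,\ldots,l_m}$ is an odd thread; hence Proposition~\ref{prop:ominusoddknot} shows their $\ominus$-sum is an odd thread, settling the first assertion. The same proposition tells us that the even threads entangled with $\boxslash_{l_1} \ominus \boxslash_{l_2,\ldots,l_m}$ are exactly the permutations $\alpha_1 \ominus 1 \ominus \alpha_2$ with $\alpha_1$ entangled with $\boxslash_{l_1}$ and $\alpha_2$ entangled with $\boxslash_{l_2,\ldots,l_m}$. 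By Theorem~\ref{thm:oddupevendown}(i), $\alpha_1$ ranges over the $3412$-avoiding involutions of length $l_1 - 1$, and by the induction hypothesis $\alpha_2$ ranges over the permutations $\pi_2 \ominus 1 \ominus \cdots \ominus 1 \ominus \pi_m$ with each $\pi_j$ a $3412$-avoiding involution of length $l_j - 1$. Setting $\pi_1 = \alpha_1$ then yields exactly the permutations of the claimed form, completing the induction.

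Since the substantive work is already contained in the two cited results, I do not expect a genuine obstacle here. The only points requiring care are bookkeeping ones: confirming that $\ominus$ is associative so the regrouping $\boxslash_{l_1} \ominus \boxslash_{l_2,\ldots,l_m}$ is legitimate, and checking that the forward and reverse inclusions supplied by Proposition~\ref{prop:ominusoddknot} match up cleanly with the inductive description, so that no entangled even threads are missed or spuriously added.
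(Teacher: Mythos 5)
Your proof is correct and follows essentially the same route as the paper, whose proof of this corollary is exactly an induction on $m$ combining Theorem \ref{thm:oddupevendown}(i) (base case) with Proposition \ref{prop:ominusoddknot} (inductive step). The additional bookkeeping you flag---associativity of $\ominus$ and the exactness of the entanglement characterization in Proposition \ref{prop:ominusoddknot}---is sound and merely makes explicit what the paper leaves implicit.
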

\begin{proof}
This follows from Theorem \ref{thm:oddupevendown}(i) and Proposition \ref{prop:ominusoddknot} by induction on $m$.
\end{proof}

\begin{corollary}
\label{cor:layeredoddMotzkinproduct}
For any $m \ge 1$ and any positive integers $l_1,\ldots,l_m$, there are exactly $M_{l_1-1} \cdots M_{l_m-1}$ even threads entangled with the odd thread $\boxslash_{l_1,\ldots,l_m}$.
\end{corollary}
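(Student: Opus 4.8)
The plan is to count directly the even threads that Corollary \ref{cor:evenforlayeredodd} has already described explicitly. That result identifies the even threads entangled with $\boxslash_{l_1,\ldots,l_m}$ as exactly the permutations of the form $\pi_1 \ominus 1 \ominus \pi_2 \ominus 1 \ominus \cdots \ominus 1 \ominus \pi_m$, where each $\pi_j$ is a $3412$-avoiding involution of length $l_j-1$. So I would first invoke the fact, recorded in the discussion preceding Theorem \ref{thm:oddupevendown}, that the number of $3412$-avoiding involutions of length $k$ is the Motzkin number $M_k$ (this also handles the degenerate case $l_j = 1$, where $\pi_j = \emptyset$ and $M_0 = 1$). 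This gives $M_{l_j-1}$ independent choices for each factor $\pi_j$, hence $\prod_{j=1}^m M_{l_j-1}$ tuples $(\pi_1,\ldots,\pi_m)$ in total.

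It then remains to verify that distinct tuples produce distinct even threads, that is, that the assignment $(\pi_1,\ldots,\pi_m) \mapsto \pi_1 \ominus 1 \ominus \cdots \ominus 1 \ominus \pi_m$ is injective. I would argue this straight from the definition of $\ominus$. Because the lengths $l_1,\ldots,l_m$ are fixed, the separating $1$'s sit at the predetermined positions $l_1,\ l_1+l_2,\ \ldots,\ l_1+\cdots+l_{m-1}$, and the block $\pi_j$ occupies precisely the positions strictly between the $(j-1)$st and $j$th of these. Since $\ominus$ stacks its summands so that every entry of an earlier summand exceeds every entry of a later one, each block also occupies a predetermined contiguous range of values. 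Consequently each $\pi_j$ can be read off from $\alpha = \pi_1 \ominus 1 \ominus \cdots \ominus 1 \ominus \pi_m$ by restricting to its block of positions, so the map is injective, and in fact is a bijection from the tuples onto the set of entangled even threads.

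Combining the two steps, the number of even threads entangled with $\boxslash_{l_1,\ldots,l_m}$ equals the number of such tuples, namely $M_{l_1-1}\cdots M_{l_m-1}$, as claimed. The only point needing any attention is the uniqueness of the decomposition underlying the injectivity claim; but since the block boundaries are dictated by the known lengths $l_j$, this is immediate bookkeeping rather than a genuine obstacle.
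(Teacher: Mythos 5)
Your proof is correct and takes essentially the same approach as the paper, which likewise deduces the count immediately from Corollary \ref{cor:evenforlayeredodd} together with the fact that the number of $3412$-avoiding involutions of length $l_j-1$ is $M_{l_j-1}$. Your explicit check that distinct tuples $(\pi_1,\ldots,\pi_m)$ yield distinct permutations $\pi_1 \ominus 1 \ominus \cdots \ominus 1 \ominus \pi_m$ merely spells out a bookkeeping step the paper treats as immediate.
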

\begin{proof}
This is immediate from Corollary \ref{cor:evenforlayeredodd}, since the number of $3412$-avoiding involutions of length $l_j-1$ is $M_{l_j-1}$ for $1 \le j \le m$.
\end{proof}

It is natural at this point to try to use Theorem \ref{thm:oddupevendown}(ii) and Proposition \ref{prop:eveneven} to prove a analogues of Corollaries \ref{cor:evenforlayeredodd} and \ref{cor:layeredoddMotzkinproduct} for even threads which are layered in the opposite direction.
Unfortunately, doing this only allows us to recover Theorem \ref{thm:oddupevendown}(ii).
Nevertheless, we can use Corollary \ref{cor:evenforlayeredodd} to prove the analogues we seek.
We begin by making the phrase ``layered in the opposite direction'' precise.

\begin{definition}
\label{defn:downlayered}
For $m \ge 1$ and any positive integers $l_1,\ldots, l_m$, we write $\boxbslash_{l_1,\ldots, l_m}$ to denote the permutation $\boxbslash_{l_1} \oplus \boxbslash_{l_2} \oplus \cdots \oplus \boxbslash_{l_m}$.
We sometimes call a permutation of the form $\boxbslash_{l_1,\ldots, l_m}$ {\em down-layered}.
\end{definition}

We now have a natural analogue of Corollary \ref{cor:evenforlayeredodd} for down-layered even threads.

\begin{theorem}
\label{thm:oddforlayeredeven}
For $m \ge 1$ and any positive integers $l_1,\ldots,l_m$, the permutation $\boxbslash_{l_1,\ldots,l_m}$ is an even thread, and the odd threads entangled with it are the permutations of the form $1 \oplus \pi_1 \oplus 1 \oplus \pi_2 \oplus 1 \oplus \cdots \oplus 1 \oplus \pi_m \oplus 1$, where $\pi_j$ is the complement of a $3412$-avoiding involution of length $l_j-1$ for $1 \le j \le m$.
\end{theorem}
\begin{proof}
First note that the result is Theorem \ref{thm:oddupevendown}(ii) when $m = 1$, so we may assume $m \ge 2$.

Since $@$ is an even thread and $\boxslash_{l_1,\ldots,l_m}$ is an odd thread, by Theorem \ref{thm:decomp} we see that $\boxbslash_{l_1,\ldots,l_m} = c(\boxslash_{l_1,\ldots,l_m} \ominus 1 \ominus @$ is an even thread.

To prove the rest of the result, suppose $\pi$ is a snow leopard permutation with $\pi^e = \boxbslash_{l_1,\ldots,l_m}$.
By Theorem \ref{thm:slpdecom}, there are snow leopard permutations $\sigma_1$ and $\sigma_2$ such that $\pi = (1 \ominus c(\sigma_1)\ominus 1) \ominus 1 \ominus \sigma_2$.
If $\sigma_2 \neq @$ then there is an odd thread $\beta_1$ and a nonempty even thread $\alpha_1$ such that $\pi^e = c(\beta_1) \ominus 1 \ominus \alpha_1$.
In $c(\beta_1) \ominus 1 \ominus \alpha_1$, the largest entry appears to the left of the smallest entry, but in $\pi^e = \boxbslash_{l_1,\ldots,l_m}$ the largest entry appears to the right of the smallest entry (since $m \ge 2$).
Therefore, $\sigma_2 = @$ and $\pi = 1 \oplus c(\sigma_1) \oplus 1$.

Since $\pi^e = \boxbslash_{l_1,\ldots,l_m}$, we have $\sigma_1^o = c(\boxbslash_{l_1,\ldots,l_m}) = \boxslash_{l_1,\ldots,l_m}$.
Now by Corollary \ref{cor:evenforlayeredodd}, the permutation $\sigma_1^e$ has the form $\pi_1 \ominus 1 \ominus \cdots \ominus 1 \ominus \pi_{l_m}$, where $\pi_j$ is a $3412$-avoiding involution of length $l_j-1$ for $1 \le j \le m$.
This means $\pi^o = 1 \oplus c(\pi_1) \oplus 1 \oplus \cdots \oplus 1 \oplus c(\pi_{l_m}) \oplus 1$, and the result follows.
\end{proof}

\begin{corollary}
\label{cor:layeredevenMotzkinproduct}
For $m \ge 1$ and any positive integers $l_1,\ldots,l_m$, there are exactly $M_{l_1-1} \cdots M_{l_m-1}$ odd threads entangled with the even thread $\boxbslash_{l_1,\ldots,l_m}$.
\end{corollary}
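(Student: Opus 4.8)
The plan is to read this off directly from Theorem~\ref{thm:oddforlayeredeven}, which already does all the structural work; the only remaining task is to count the parametrizing data, exactly as Corollary~\ref{cor:layeredoddMotzkinproduct} was deduced from Corollary~\ref{cor:evenforlayeredodd}. Theorem~\ref{thm:oddforlayeredeven} tells us that the odd threads entangled with $\boxbslash_{l_1,\ldots,l_m}$ are precisely the permutations of the form $1 \oplus \pi_1 \oplus 1 \oplus \pi_2 \oplus 1 \oplus \cdots \oplus 1 \oplus \pi_m \oplus 1$, where each $\pi_j$ is the complement of a $3412$-avoiding involution of length $l_j-1$. So I would introduce the map sending an $m$-tuple $(\pi_1,\ldots,\pi_m)$ of admissible blocks to the permutation displayed above, and argue that it is a bijection onto the set of odd threads entangled with $\boxbslash_{l_1,\ldots,l_m}$.

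Surjectivity of this map is exactly the content of Theorem~\ref{thm:oddforlayeredeven}. For injectivity, I would observe that since the lengths $l_1,\ldots,l_m$ are fixed, the block sizes $|\pi_j| = l_j-1$ and the positions of the interspersed $1$s are completely determined, so the decomposition $1 \oplus \pi_1 \oplus 1 \oplus \cdots \oplus 1 \oplus \pi_m \oplus 1$ recovers each $\pi_j$ uniquely from the resulting permutation. Hence distinct tuples yield distinct odd threads, and the number of entangled odd threads equals the number of admissible tuples.

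Finally I would count the tuples. For each fixed $j$, the block $\pi_j$ ranges over the complements of the $3412$-avoiding involutions of length $l_j-1$. Since complementation $c$ is a bijection on $S_{l_j-1}$, it restricts to a bijection from the set of $3412$-avoiding involutions of length $l_j-1$ onto the set of their complements, so there are as many choices for $\pi_j$ as there are $3412$-avoiding involutions of length $l_j-1$, namely $M_{l_j-1}$. Multiplying over $j$ gives $M_{l_1-1}\cdots M_{l_m-1}$ admissible tuples, and therefore that many entangled odd threads.

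Because the substantive input (Theorem~\ref{thm:oddforlayeredeven}) is already established, I do not expect any genuine obstacle; the one point meriting a moment's care is the injectivity of the parametrization, and even that is routine once one notes that fixing the $l_j$ pins down exactly where each block and each separating $1$ sits.
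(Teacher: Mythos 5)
Your proof is correct and takes essentially the same route as the paper, whose proof of this corollary is a one-line deduction from Theorem~\ref{thm:oddforlayeredeven} combined with the fact that there are $M_{l_j-1}$ $3412$-avoiding involutions of length $l_j-1$. The only difference is that you make explicit the injectivity of the parametrization and the use of complementation as a bijection, both of which the paper treats as immediate.
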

\begin{proof}
This is immediate from Theorem \ref{thm:oddforlayeredeven}, since the number of $3412$-avoiding involutions of length $l_j-1$ is $M_{l_j-1}$ for $1 \le j \le m$.
\end{proof}

Inspired by Corollaries \ref{cor:layeredoddMotzkinproduct} and \ref{cor:layeredevenMotzkinproduct}, along with some numerical data, we make the following conjecture.

\begin{conjecture}
\label{conj:Motzkinproduct}
For any even (resp.~odd) thread $\alpha$ (resp.~$\beta$), the number of odd (resp.~even) threads entangled with $\alpha$ (resp.~$\beta$) is a product of Motzkin numbers.
\end{conjecture}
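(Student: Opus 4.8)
The plan is to prove the stronger statement that for every even thread $\alpha$ the number $e(\alpha)$ of odd threads entangled with $\alpha$ is a product of Motzkin numbers, and simultaneously that for every odd thread $\beta$ the number $o(\beta)$ of even threads entangled with $\beta$ is a product of Motzkin numbers, arguing by simultaneous strong induction on length. I would first dispose of the odd threads. Given an odd thread $\beta$, Theorem~\ref{thm:decomp} gives the \emph{unique} decomposition $\beta = (1 \oplus c(\alpha_2) \oplus 1) \ominus \beta_2$. If $\pi$ is a snow leopard permutation with $\pi^o = \beta$, then its unique decomposition from Theorem~\ref{thm:slpdecom} forces $\pi_1^e = \alpha_2$ and $\pi_2^o = \beta_2$, while $\pi^e = c(\pi_1^o) \ominus 1 \ominus \pi_2^e$. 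Here $\pi_1^o$ ranges over the odd threads entangled with $\alpha_2$ and $\pi_2^e$ ranges over the even threads entangled with $\beta_2$; since $|c(\pi_1^o)| = |\alpha_2|+1$ is fixed, the middle $1$ always sits in the same position and the map $(\pi_1^o,\pi_2^e)\mapsto c(\pi_1^o)\ominus 1 \ominus \pi_2^e$ is injective, yielding the clean product recurrence
\begin{equation*}
o(\beta) = e(\alpha_2)\, o(\beta_2).
\end{equation*}
Iterating this along the tail reduces $o(\beta)$ to a product of numbers $e(\gamma)$ over even threads $\gamma$ of smaller length, so the entire conjecture reduces to showing that $e(\alpha)$ is always a product of Motzkin numbers.

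For $e(\alpha)$ I would classify the snow leopard permutations $\pi$ with $\pi^e = \alpha$ by their first entry. Those with $\pi(1)=1$ are exactly the permutations of the form $1 \oplus c(\pi_1) \oplus 1$; they satisfy $\pi^e = c(\pi_1^o)$, so they contribute $o(c(\alpha))$, interpreted as $0$ when $c(\alpha)$ is not an odd thread. Those with $\pi(1)>1$ carry a connector, which by the discussion after Theorem~\ref{thm:ekdecomp} corresponds to an eligible connector of $\alpha$; grouping them by the connector used and reading off the induced decomposition $\alpha = c(\beta_1)\ominus 1 \ominus \alpha_1$ shows that an eligible connector at a fixed position contributes $o(\beta_1)\,e(\alpha_1)$. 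Thus
\begin{equation*}
e(\alpha) = o(c(\alpha)) + \sum_{p} o\big(\beta_1^{(p)}\big)\, e\big(\alpha_1^{(p)}\big),
\end{equation*}
the sum ranging over those eligible connectors of $\alpha$ that actually arise from a snow leopard permutation. When $\alpha(1)\neq n$, Lemma~\ref{lem:startwithn} forces every connector to be the leftmost eligible one, so at most one term of the sum survives; after checking that this term and the $o(c(\alpha))$ term are never simultaneously nonzero, $e(\alpha)$ is a single product and induction closes this case.

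The hard case, and the main obstacle, is $\alpha(1)=n$, where $\alpha = 1 \ominus \alpha'$ and $c(\alpha) = 1 \oplus c(\alpha')$. Here Lemma~\ref{lem:startwithn} permits \emph{many} eligible connectors to be valid, so $e(\alpha)$ is a genuine sum of products of Motzkin numbers and one must show that this sum collapses to a single product. The governing prototype is $\alpha = \boxbslash_n = 1 \ominus \boxbslash_{n-1}$: there every position is an eligible connector, the contributions are $o(\boxslash_{p-1}) = M_{p-2}$ and $e(\boxbslash_{n-p}) = M_{n-p+1}$, the $\pi(1)=1$ term is $o(\boxslash_n)=M_{n-1}$, and the resulting identity $e(\boxbslash_n)=M_{n+1}$ is exactly the Motzkin recurrence $M_{n+1} = M_n + \sum_i M_i M_{n-1-i}$. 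My plan for general $\alpha(1)=n$ is therefore to identify the valid eligible connectors in terms of the recursive structure of $\alpha'$ and to match the resulting sum against a weighted Motzkin convolution. I expect this to require strengthening the induction hypothesis so that it records not merely that $e(\alpha)$ and $o(\beta)$ are products of Motzkin numbers but \emph{which} product they are (equivalently, a generating-function identity in a suitable ``Motzkin algebra''), since only such bookkeeping makes the convolution identity provable. Pinning down precisely which eligible connectors are valid when $\alpha$ begins with its largest entry, and proving that the associated sum always telescopes into a product, is the crux on which the whole argument depends.
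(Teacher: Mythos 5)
You should know at the outset that the statement you are proving is Conjecture~\ref{conj:Motzkinproduct}: the paper offers no proof of it, explicitly lists proving it as an open problem in its final section, and has only verified it computationally (through even threads of length eleven) and established the special cases of layered threads (Theorem~\ref{thm:oddupevendown} and Corollaries~\ref{cor:layeredoddMotzkinproduct} and~\ref{cor:layeredevenMotzkinproduct}). So there is no proof in the paper to compare against, and your proposal must be judged on its own — and by your own admission it is a program, not a proof. That said, the parts you do carry out are sound and go beyond what the paper states. The recurrence $o(\beta) = e(\alpha_2)\,o(\beta_2)$ is correct: the decomposition \eqref{eqn:odecomp} is unique, Theorem~\ref{thm:slpdecom} forces $\pi_1^e = \alpha_2$ and $\pi_2^o = \beta_2$ in any snow leopard permutation realizing $\beta$, and injectivity of $(\pi_1^o,\pi_2^e) \mapsto c(\pi_1^o) \ominus 1 \ominus \pi_2^e$ holds because the block lengths are fixed. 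Your sum formula for $e(\alpha)$ is also genuinely disjoint, since an odd thread $(1 \oplus c(\pi_1^e) \oplus 1) \ominus \pi_2^o$ arising from a connector at position $k+1$ has its largest entry at position $k+1 \le n$, while the $\pi(1)=1$ case puts it at position $n+1$; and your prototype computation $e(\boxbslash_n) = M_{n-1} + M_n + \sum_{k=0}^{n-2} M_k M_{n-1-k} = M_{n+1}$ correctly matches Theorem~\ref{thm:oddupevendown}(ii).

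The genuine gap is exactly where you flag it, but it is worth saying why it is not a technicality: determining \emph{which} eligible connectors of an even thread $\alpha$ with $\alpha(1)=n$ actually arise from snow leopard permutations is itself unsolved — the paper proves only that the leftmost one always arises (Theorem~\ref{thm:ekdecomp}) and that any other forces $\alpha(1)=n$ (Lemma~\ref{lem:startwithn}), and it exhibits eligible connectors that arise from no decomposition at all (e.g.\ the connector $1$ in $354621$). Without a characterization of the valid connectors, the sum $e(\alpha) = o(c(\alpha)) + \sum_p o(\beta_1^{(p)})\,e(\alpha_1^{(p)})$ has an unknown index set, and the claim that it always telescopes to a single Motzkin product is precisely the content of the conjecture; your proposed "Motzkin algebra" bookkeeping is a plausible framing but contains no mechanism for the collapse. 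Two smaller unproved assertions also need attention. First, in the case $\alpha(1) \neq n$ you assert that the $o(c(\alpha))$ term and the surviving connector term are "never simultaneously nonzero"; this is stated without argument, and nothing in the paper rules out an even thread with $\alpha(1) \ne n$ realized both by some $\pi = 1 \oplus c(\pi_1) \oplus 1$ and by some $\pi$ with a connector (note $\pi(1)=1$ does not force $\alpha(1)=n$: the permutation $1432567$ has even thread $213$). Second, your induction as stated is circular on its face, since $e(\alpha)$ at length $n$ invokes $o(c(\alpha))$ at the same length $n$; this is repairable — order the induction so that at each length $o$ is resolved before $e$, using the fact that $o$ at length $n$ depends only on threads of length at most $n-1$ — but the repair should be made explicit. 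In sum: a correct reduction with accurate groundwork, but the conjecture remains open after it.
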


We have verified Conjecture \ref{conj:Motzkinproduct} for all even threads of length eleven or less and all odd threads of length twelve or less.

\section{Janus Threads}
\label{sec:janusthreads}

Having investigated the entanglement relation between even and odd threads, we now turn our attention to those threads which are both even and odd.

\begin{definition}
We say a permutation is a {\em Janus thread} whenever it is both an even thread and an odd thread.
For convenience, we also regard the antipermutation $@$ as a Janus thread (of length $-1$).
For each $n \ge -1$, we write $JT_n$ to denote the set of Janus threads of length $n$.
\end{definition}

Remarkably, for small $n$ nearly every odd thread is also a Janus thread:  the smallest odd thread that is not also an even thread is $3412$.
Nevertheless, it's natural to ask how many Janus threads of length $n$ there are.
In Table \ref{table:janusthreadcount} we have the number of Janus threads of length nine or less.
\begin{table}[ht]
\centering
\begin{tabular}{c|c|c|c|c|c|c|c|c|c|c|c}
$n$ & $-1$ & 0 & 1 & 2 & 3 & 4 & 5 & 6 & 7 & 8 & 9 \\
\hline
$|JT_n|$ & 1 & 1 & 1 & 2 & 4 & 8 & 17 & 37 & 82 & 185 & 423 
\end{tabular}
\caption{The number of Janus threads of length nine or less.}
\label{table:janusthreadcount}
\end{table}

As was the case for even and odd threads, Janus threads are related to a certain kind of lattice path.
To describe these lattice paths, first recall that a {\em Motzkin path} of length $n$ is a lattice path from $(0,0)$ to $(n,0)$ consisting of unit Up $(1,1)$, Down $(1,-1)$, and Level $(1,0)$ steps which does not pass below the $x$-axis.
As their name suggests, the Motzkin paths of length $n$ are counted by the Motzkin number $M_n$, which we met in Section \ref{sec:entangled}. 
A {\em peak} in a Motzkin path is a pair of consecutive steps in which the first step is an Up step and the second is a Down step.
We say a Motzkin path is {\em peakless} whenever it has no peaks, and we write $UD_n$ to denote the set of peakless Motzkin paths of length $n$.
For example, $UD_5$ consists of eight Motzkin paths:  $LLLLL$, $LLULD$, $LULDL$, $ULDLL$, $LULLD$, $ULLDL$, $ULLLD$, and $ULLLD$.
In Table \ref{table:peaklessmotzkin} we have the number of peakless Motzkin paths of length ten or less.
\begin{table}[ht]
\centering
\begin{tabular}{c|c|c|c|c|c|c|c|c|c|c|c}
$n$ & 0 & 1 & 2 & 3 & 4 & 5 & 6 & 7 & 8 & 9 & 10 \\
\hline
$|UD_n|$ & 1 & 1 & 1 & 2 & 4 & 8 & 17 & 37 & 82 & 185 & 423
\end{tabular}
\caption{The number of peakless Motzkin paths of length ten or less.}
\label{table:peaklessmotzkin}
\end{table}
These paths are counted by a certain sequence of generalized Catalan numbers, which is sequence A004148 in the OEIS;  its terms satisfy $a_n = a_{n-1}  + \sum_{k=1}^{n-2} a_k a_{n-2-k}$ for $n \ge 0$.

As a comparison of Tables \ref{table:janusthreadcount} and \ref{table:peaklessmotzkin} suggests, Janus threads of length $n$ are in bijection with peakless Motzkin paths of length $n+1$.
As a first step in constructing a bijection between these two sets, we describe how to construct peakless Motzkin paths recursively.

\begin{theorem}
\label{thm:peaklessdecomp}
Suppose $n$ is a positive integer.
For each $p \in UD_n$, exactly one of (i) and (ii) below holds.
\begin{enumerate}
\item[{\upshape (i)}]
There is a unique Motzkin path $a \in UD_{n-1}$ such that $p = L a$.
\item[{\upshape (ii)}]
There are unique integers $k \ge 1$ and $l \ge 0$ and unique Motzkin paths $a \in UD_k$ and $b \in UD_l$ such that $n = k+l+2$ and $p = U a D b$.
\end{enumerate}
Conversely, both of the following hold.
\begin{enumerate}
\item[{\upshape (iii)}]
For every $a \in UD_{n-1}$, we have $L a \in UD_n$.
\item[{\upshape (iv)}]
For any integers $k \ge 1$ and $l \ge 0$ with $n = k+l+2$, and any $a \in UD_k$ and $b \in UD_l$, we have $U a D b \in UD_n$.
\end{enumerate}
\end{theorem}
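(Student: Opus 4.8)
The plan is to decompose a peakless Motzkin path according to its first step, mirroring the classical first-return decomposition of Motzkin paths while carefully tracking the peakless condition. A nonempty Motzkin path cannot begin with a Down step (it would immediately drop below the $x$-axis), so its first step is either Level or Up. These two alternatives are mutually exclusive and will correspond to cases (i) and (ii), respectively, which settles the ``exactly one of (i) and (ii)'' claim immediately.

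For the forward direction I would argue as follows. If $p \in UD_n$ begins with $L$, write $p = La$, where $a$ is the remaining length-$(n-1)$ subpath; since $a$ is a contiguous subpath of $p$, it is again a Motzkin path with no $UD$, so $a \in UD_{n-1}$, and $a$ is clearly unique. If instead $p$ begins with $U$, let the matching Down step be the one at which $p$ first returns to height $0$ after the initial $U$; this position is uniquely determined, and the step there is necessarily a $D$. Writing $p = UaDb$, the subpath $a$ lies entirely at height $\ge 1$ and so, shifted down by one, is a Motzkin path, while $b$ is the suffix lying at height $0$; both are peakless as contiguous subpaths, giving $a \in UD_k$ and $b \in UD_l$ with $n = k+l+2$. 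The point specific to peakless paths is that $k \ge 1$: if $k = 0$ then $p$ would begin with $UD$, a peak, contradicting $p \in UD_n$. Uniqueness of $k$, $l$, $a$, and $b$ follows because the location of the first return fixes everything.

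For the converse, (iii) is immediate: prepending $L$ to $a \in UD_{n-1}$ keeps the path weakly above the axis and creates only the junction $L$-(first step of $a$), which is never a peak, so $La \in UD_n$. For (iv), I would first confirm that $UaDb$ is a valid Motzkin path of length $k+l+2 = n$ by tracking heights, and then verify peaklessness at the three new junctions: the $U$ is followed by the first step of $a$ (which exists since $k \ge 1$), and since a Motzkin path cannot begin with $D$, this step is $U$ or $L$, not a peak; the $D$ is preceded by the last step of $a$, which is $L$ or $D$ (it cannot be $U$, as $a$ ends at relative height $0$), again not a peak; and the $D$ is followed by the first step of $b$, which can never form $UD$.

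The main obstacle—really the only subtlety—is the bookkeeping around the peakless condition in case (ii) and in (iv). One must use $k \ge 1$ to forbid the peak $UD$ at the very start, and one must invoke the elementary facts that a Motzkin path neither begins with $D$ nor ends with $U$ in order to exclude peaks at the two junctions flanking the matching $D$ step. Everything else reduces to the classical Motzkin first-return decomposition, which handles existence and uniqueness of the heights and subpaths.
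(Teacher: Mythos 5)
Your proposal is correct and takes essentially the same approach the paper intends: the paper's proof is only the remark that it is ``similar to the proof of Theorem~\ref{thm:Andecomp},'' i.e., a return decomposition of the path with bookkeeping for the forbidden factor, which is precisely the first-step/first-return analysis you carry out. Your junction checks (a Motzkin path neither begins with $D$ nor ends with $U$, and $k \ge 1$ rules out an initial peak $UD$) supply exactly the details the paper leaves implicit.
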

\begin{proof}
This is similar to the proof of Theorem \ref{thm:Andecomp}.
\end{proof}

We can also use our recursive decompositions of the even and odd threads in Theorem \ref{thm:decomp} to describe how to construct Janus threads recursively.

\begin{theorem}
\label{thm:januslargestfirst}
Suppose $\gamma$ is a permutation of length at least one which begins with its largest entry.
Then $\gamma$ is a Janus thread if and only if there is a Janus thread $\gamma_1$ such that $\gamma = 1 \ominus \gamma_1$.
Moreover, when these conditions hold, $\gamma_1$ is determined by $\gamma$.
\end{theorem}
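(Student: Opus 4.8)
The plan is to unwind the definition of \emph{Janus thread} into its two constituent conditions and verify each separately: since $\gamma$ is a Janus thread exactly when it is simultaneously an even thread and an odd thread, it suffices to establish the two equivalences ``$\gamma = 1 \ominus \gamma_1$ is an even thread $\iff$ $\gamma_1$ is an even thread'' and ``$\gamma = 1 \ominus \gamma_1$ is an odd thread $\iff$ $\gamma_1$ is an odd thread'', and then intersect them. First I would dispose of the determination claim. Writing $n = |\gamma|$, the hypothesis $\gamma(1) = n$ means the entries $\gamma(2),\ldots,\gamma(n)$ are exactly a rearrangement of $1,\ldots,n-1$, so the equation $\gamma = 1 \ominus \gamma_1$ forces $\gamma_1$ to be the permutation spelled out by $\gamma(2)\cdots\gamma(n)$. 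Hence $\gamma_1$ is recovered from $\gamma$ by deleting the leading entry, and is uniquely determined.

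Next I would handle the two ``if $\gamma_1$ is a thread then $\gamma$ is'' directions, which are routine. Since $1 \ominus \gamma_1 = \emptyset \ominus 1 \ominus \gamma_1$ and $\emptyset$ is an even thread, Proposition \ref{prop:eveneven} shows $1 \ominus \gamma_1$ is an even thread whenever $\gamma_1$ is. For the odd side I would apply Theorem \ref{thm:decomp}: taking $\alpha_2 = @$ in \eqref{eqn:odecomp} gives $(1 \oplus c(@) \oplus 1) \ominus \gamma_1 = 1 \ominus \gamma_1$, so $1 \ominus \gamma_1$ is an odd thread whenever $\gamma_1$ is. Thus if $\gamma_1$ is a Janus thread, so is $\gamma = 1 \ominus \gamma_1$.

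The odd forward direction is immediate as well: if $\gamma = 1 \ominus \gamma_1$ is an odd thread, then $\gamma_1$ is an odd thread by Proposition \ref{prop:ominusremoveones}. The remaining implication---if $\gamma = 1 \ominus \gamma_1$ is an even thread then $\gamma_1$ is an even thread---is the one I expect to be the main obstacle, because the even-thread decomposition \eqref{eqn:edecomp} is not unique, so I cannot simply read $\gamma_1$ off an arbitrary decomposition. The key observation that resolves this is that the leading entry of $\gamma$ is automatically the leftmost eligible connector: since $\gamma(1) = n$ we have $c(\gamma)(1) = 1$, which is trivially a left-to-right maximum and a fixed point of $c(\gamma)$, so position $1$ is an eligible connector, and being in position $1$ it is necessarily the leftmost one.

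Granting this, I would invoke Theorem \ref{thm:ekdecomp}, which supplies a leftmost decomposition $\gamma = c(\beta_1) \ominus 1 \ominus \alpha_1$ in which the displayed $1$ occupies the leftmost eligible connector. As that connector lies in position $1$, the prefix $c(\beta_1)$ is empty, forcing $\beta_1 = \emptyset$ and $\gamma = 1 \ominus \alpha_1$ with $\alpha_1$ an even thread; by the determination already established, $\gamma_1 = \alpha_1$, so $\gamma_1$ is an even thread. Combining the four implications shows that $\gamma$ is an even thread if and only if $\gamma_1$ is, and an odd thread if and only if $\gamma_1$ is. Intersecting, $\gamma$ is a Janus thread exactly when $\gamma_1$ is, which completes the argument.
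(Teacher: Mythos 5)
Your proof is correct and takes essentially the same route as the paper's: the easy direction via Propositions \ref{prop:ominusoddknot} and \ref{prop:eveneven} (equivalently Theorem \ref{thm:decomp}), the odd forward direction via Proposition \ref{prop:ominusremoveones}, and the even forward direction via the leftmost decomposition supplied by Theorem \ref{thm:ekdecomp}. Your observation that $\gamma(1)=n$ gives $c(\gamma)(1)=1$, making position $1$ the leftmost eligible connector and immediately forcing $\beta_1=\emptyset$, is a slightly cleaner way to reach the same conclusion than the paper's contradiction argument about where the largest entry of $c(\beta_2)\ominus 1\ominus\alpha_2$ can sit, but the key lemma and overall structure are identical.
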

\begin{proof}
($\Rightarrow$)
Suppose $\gamma$ is a Janus thread which begins with its largest entry.
Since $\gamma$ begins with its largest entry, it has the form $1 \ominus \gamma_1$ for a unique permutation $\gamma_1$.
In particular, the last statement of the theorem holds.

To show $\gamma_1$ is an odd thread, first note that since $\gamma$ is an odd thread, by \eqref{eqn:odecomp} there is an even thread $\alpha_1$ and an odd thread $\beta_1$ such that $\gamma = (1 \oplus c(\alpha_1) \oplus 1) \ominus \beta_1$.
Since $\gamma$ begins with its largest entry, we must have $\alpha_1 = @$ and $\beta_1 = \gamma_1$, so $\gamma_1$ is an odd thread.

To show $\gamma_1$ is an even thread, and therefore a Janus thread, first note that since $\gamma$ is an even thread, by \eqref{eqn:edecomp} and Theorem \ref{thm:ekdecomp} there is an even thread $\alpha_2$ and an odd thread $\beta_2$ such that $\gamma = c(\beta_2) \ominus 1 \ominus \alpha_2$, and the $1$ corresponds to the leftmost eligible connector in $\gamma$.
If $c(\beta_2)$ begins with its largest entry, then $\beta_2$ begins with 1, and the first entry of $\gamma$ is an eligible connector.
However, this contradicts the fact that the leftmost eligible connector in $\gamma$ is not in $\beta_2$.
Therefore, $c(\beta_2)$ cannot begin with its largest entry.
Since $\gamma = c(\beta_2) \ominus 1 \ominus \alpha_2$ does begin with its largest entry, we must have $\beta_2 = @$ or $\beta_2 = \emptyset$.
The first of these contradicts the fact that the leftmost eligible entry of $\gamma$ does not occur in $\alpha_2$, so we must have $\gamma = 1 \ominus \alpha_2$.
Therefore, $\gamma_1 = \alpha_2$ is an even thread.

($\Leftarrow$)
This is immediate from Propositions \ref{prop:ominusoddknot} and \ref{prop:eveneven}, the fact that $1$ is an odd thread, and the fact that $\emptyset$ is an even thread. 
\end{proof}

\begin{theorem}
\label{thm:januslargestnotfirst}
Suppose $\gamma$ is a permutation of length at least two which does not begin with its largest entry.
Then $\gamma$ is a Janus thread if and only if there are Janus threads $\gamma_1$ and $\gamma_2$ such that $\gamma_1$ has nonnegative length and $\gamma = (1 \oplus c(\gamma_1) \oplus 1) \ominus 1 \ominus \gamma_2$.
Moreover, when these conditions hold, $\gamma_1$ and $\gamma_2$ are determined by $\gamma$.
\end{theorem}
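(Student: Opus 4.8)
The plan is to prove Theorem~\ref{thm:januslargestnotfirst} by establishing both directions separately, mirroring the structure of Theorem~\ref{thm:januslargestfirst} but now handling the case where the recursive decomposition produces a genuine two-block structure rather than a single prepended maximum. The key tension to navigate is that $\gamma$ must simultaneously be an even thread and an odd thread, so I must show the two decompositions (from \eqref{eqn:edecomp} and \eqref{eqn:odecomp}) collapse to the \emph{same} pair of pieces $\gamma_1, \gamma_2$, and that each piece inherits both properties.

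For the forward direction, suppose $\gamma$ is a Janus thread not beginning with its largest entry. First I would use the fact that $\gamma$ is an odd thread together with the unique odd-thread decomposition \eqref{eqn:odecomp}: there are $\alpha_2 \in ET$ and $\beta_2 \in OT$ with $\gamma = (1 \oplus c(\alpha_2) \oplus 1) \ominus \beta_2$. Since $\gamma$ does not begin with its largest entry, the leading $1$ of this decomposition cannot itself be the largest entry, which forces $\alpha_2$ to be nonempty (or at least forces the second $1$ to appear, giving the $\ominus 1 \ominus$ shape). I then need to extract, from the even-thread structure of $\gamma$ via Theorem~\ref{thm:ekdecomp} and its leftmost decomposition $\gamma = c(\beta_1) \ominus 1 \ominus \alpha_1$, that the two decompositions are compatible. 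Comparing largest and smallest entries, and using that the middle $1$ in the even decomposition corresponds to the leftmost eligible connector, I expect to identify $c(\gamma_1) = c(\beta_2^{\,\prime})$ and deduce $\gamma_1 = \alpha_2$ with $\gamma_2 = \beta_2$ (up to removing the trailing/leading singletons). The main work is checking that $\gamma_1$ and $\gamma_2$ are each simultaneously even and odd threads: $\gamma_2$ being an odd thread is immediate from \eqref{eqn:odecomp}, and I would recover that it is also an even thread by an argument parallel to the one in Theorem~\ref{thm:januslargestfirst}, while for $\gamma_1$ I would read off that $c(\gamma_1)$ is an even thread from the odd decomposition and that $\gamma_1$ is an odd thread from the even decomposition, then invoke the complement-symmetry of the thread definitions implicit in Theorem~\ref{thm:decomp}.

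For the reverse direction, I would assume $\gamma = (1 \oplus c(\gamma_1) \oplus 1) \ominus 1 \ominus \gamma_2$ for Janus threads $\gamma_1, \gamma_2$ and verify $\gamma$ is both an even and an odd thread. That $\gamma$ is an odd thread follows by writing it as $(1 \oplus c(\gamma_1) \oplus 1) \ominus \beta$ where $\beta = 1 \ominus \gamma_2$ is an odd thread (using Theorem~\ref{thm:januslargestfirst} applied to $1 \ominus \gamma_2$, since $\gamma_2$ is Janus, hence odd), and then applying \eqref{eqn:odecomp} through Theorem~\ref{thm:decomp}. That $\gamma$ is an even thread follows by recognizing the shape $(1 \oplus c(\gamma_1) \oplus 1) \ominus 1 \ominus \gamma_2$ as exactly the even-thread half of the snow leopard decomposition in Theorem~\ref{thm:slpdecom}: since $\gamma_1$ and $\gamma_2$ are threads, they are the $e$- or $o$-components of snow leopard permutations, and assembling those via $(1 \oplus c(\pi_1) \oplus 1) \ominus 1 \ominus \pi_2$ produces a snow leopard permutation whose even thread is $\gamma$.

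The hard part will be the forward direction's uniqueness and reconciliation step: I must rule out the degenerate possibilities (analogous to forcing $\beta_2 = @$ or $\emptyset$ in the previous theorem) so that the even decomposition and the odd decomposition refer to the same split point, and then argue that the resulting $\gamma_1, \gamma_2$ are genuinely \emph{Janus} rather than merely one-sided threads. I anticipate the cleanest route is to lift $\gamma$ to an actual snow leopard permutation $\pi$ realizing both $\pi^e$ and $\pi^o$ constraints — but since $\gamma$ is even and odd via possibly \emph{different} snow leopard permutations, I cannot assume a single $\pi$ gives both, so the argument must stay at the level of the thread decompositions and exploit the positional rigidity (largest-entry and smallest-entry locations, eligible connectors) to force agreement. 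Establishing that $\gamma_1$ and $\gamma_2$ are determined by $\gamma$ then follows from the uniqueness of the leftmost decomposition (Theorem~\ref{thm:ekdecomp}) together with the uniqueness of the odd-thread decomposition noted after Theorem~\ref{thm:decomp}.
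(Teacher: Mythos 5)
Your overall architecture matches the paper's (start from the unique odd-thread decomposition \eqref{eqn:odecomp}, bring in the leftmost even-thread decomposition via Theorem~\ref{thm:ekdecomp}, and build the reverse direction from Propositions~\ref{prop:ominusoddknot} and \ref{prop:eveneven}), but the central step of the forward direction is missing rather than merely deferred. Writing $\gamma = (1 \oplus c(\alpha_1) \oplus 1) \ominus \beta_1$ via \eqref{eqn:odecomp}, everything hinges on proving that $\beta_1$ \emph{begins with its largest entry}, so that $\beta_1 = 1 \ominus \gamma_2$ and the odd and even decompositions share a cut point; you only say you ``expect to identify'' the pieces and will ``exploit positional rigidity.'' The paper's proof of exactly this claim is the heart of the theorem and is not a routine comparison of extreme entries: assuming $\beta_1$ does not begin with its largest entry, one iterates \eqref{eqn:odecomp} to get $\gamma = (1 \oplus c(\alpha_1) \oplus 1) \ominus \cdots \ominus (1 \oplus c(\alpha_k) \oplus 1) \ominus 1 \ominus \beta_{k+1}$ with $k \ge 2$, observes that no eligible connector of $\gamma$ can lie inside a block $1 \oplus c(\alpha_j) \oplus 1$, so the leftmost eligible connector is the rightmost $1$; Theorem~\ref{thm:ekdecomp} then forces $(1 \ominus \alpha_1 \ominus 1) \oplus \cdots \oplus (1 \ominus \alpha_k \ominus 1)$ to be an odd thread, and re-applying \eqref{eqn:odecomp} to \emph{that} permutation and matching largest entries yields $\beta = \alpha_k \ominus 1$, whose entries lie on the wrong side of the largest entry --- a contradiction using $k \ge 2$ and $\alpha_1 \neq @$. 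Without an argument of this kind, your identification $\gamma_1 = \alpha_2$, $\gamma_2 = \beta_2$ is unjustified, since a priori the even-thread cut could fall strictly inside $\beta_1$.

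A second, smaller defect: the appeal to ``complement-symmetry of the thread definitions implicit in Theorem~\ref{thm:decomp}'' is unsound, because no such symmetry exists --- complementation does not exchange even and odd threads (indeed $|ET_3| = 6 \neq 4 = |OT_3|$). What the leftmost decomposition actually gives you is that $1 \ominus \gamma_1 \ominus 1$ is an odd thread, and the correct tool for concluding that $\gamma_1$ itself is an odd thread is Proposition~\ref{prop:ominusremoveones}, applied twice to strip the bordering singletons; the same proposition (not \eqref{eqn:odecomp} ``immediately'') is what takes you from $1 \ominus \gamma_2$ odd to $\gamma_2$ odd. Your reverse direction is essentially correct, though to realize $1 \oplus c(\gamma_1) \oplus 1$ as $c(\pi_1^o)$ in the snow leopard decomposition you should note explicitly that $\pi_1^o = 1 \ominus \gamma_1 \ominus 1$ must be an odd thread, which follows from $\gamma_1$ being Janus together with Proposition~\ref{prop:ominusoddknot}.
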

\begin{proof}
($\Rightarrow$)
Suppose $\gamma$ is a Janus thread which does not begin with its largest entry.
Since $\gamma$ is an odd thread, by \eqref{eqn:odecomp} there is an even thread $\alpha_1$ and an odd thread $\beta_1$ such that $\gamma = (1 \oplus c(\alpha_1) \oplus 1) \ominus \beta_1$.
The second 1 in this decomposition corresponds to the largest entry of $\gamma$, so this decomposition is unique, and the last statement of the theorem holds.
Moreover, since $\gamma$ does not begin with its largest entry, $\alpha_1$ has nonnegative length.
Since all odd threads have nonnegative length, $\beta_1$ also has nonnegative length.

We claim $\beta_1$ begins with its largest entry.
To prove this, suppose by way of contradiction that $\beta_1$ does not begin with its largest entry.
Then by \eqref{eqn:odecomp} there is an even thread $\alpha_2$ of nonnegative length and an odd thread $\beta_2$ such that $\beta_1 = (1 \oplus c(\alpha_2) \oplus 1) \ominus \beta_2$.
Repeating this argument as long as $\beta_j$ does not begin with its largest entry, we find there are even threads $\alpha_1, \ldots,\alpha_k$ of nonnegative length and an odd thread $\beta_{k+1}$ such that
\begin{equation}
\label{eqn:gammadecomp}
\gamma = (1 \oplus c(\alpha_1) \oplus 1 ) \ominus (1 \oplus c(\alpha_2) \oplus 1) \ominus \cdots \ominus (1 \oplus c(\alpha_k) \oplus 1) \ominus 1 \ominus \beta_{k+1}.
\end{equation}
Furthermore, because $\gamma$ is an even thread, $c(\gamma)$ must have a fixed point which is a left-to-right maximum.
This cannot occur in any of the summands $1 \oplus c(\alpha_j) \oplus 1$, so the leftmost eligible connector in $\gamma$ must correspond to the rightmost 1 in the decomposition in \eqref{eqn:gammadecomp}.
By Theorem \ref{thm:ekdecomp}, the permutation $(1 \oplus c(\alpha_1) \oplus 1 ) \ominus (1 \oplus c(\alpha_2) \oplus 1) \ominus \cdots \ominus (1 \oplus c(\alpha_k) \oplus 1)$ is the complement of an odd thread and $\beta_{k+1}$ an even thread.
Now it follows that $(1 \ominus \alpha_1 \ominus 1) \oplus \cdots \oplus (1 \ominus \alpha_k \ominus 1)$ is an odd thread, and by \eqref{eqn:odecomp} there is an even thread $\alpha$ and an odd thread $\beta$ such that $(1 \ominus \alpha_1 \ominus 1) \oplus \cdots \oplus (1 \ominus \alpha_k \ominus 1) = (1 \oplus c(\alpha) \oplus 1) \ominus \beta$.
Matching largest entries on each side of this equation, we find $\beta = \alpha_k \ominus 1$.
But all of the entries of $\beta$ in $(1 \oplus c(\alpha) \oplus 1) \ominus \beta$ are less than every entry to the left of the largest entry, while all of the entries of $\alpha_k \ominus 1$ in $(1 \ominus \alpha_1 \ominus 1) \oplus \cdots \oplus (1 \ominus \alpha_k \ominus 1)$ are greater than all of the entries to the left of the largest entry.
This contradicts the facts that $k \ge 2$ and $\alpha_1 \neq @$.

Since $\beta_1$ begins with its largest entry, we now know that we have $\gamma = (1 \oplus c(\gamma_1) \oplus 1) \ominus 1 \ominus \gamma_2$, where $\gamma_1$ has nonnegative length, $\gamma_1$ is an even thread, $1 \ominus \gamma_2$ is an odd thread, and $1 \oplus c(\gamma_1) \oplus 1$ is the complement of an odd thread.
Furthermore, the rightmost 1 in this decomposition is the leftmost eligible connector in $\gamma$, so by Theorem \ref{thm:ekdecomp} we see that $\gamma_2$ is an even thread.
On the other hand, since $1 \ominus \gamma_2$ is an odd thread, by Proposition \ref{prop:ominusremoveones}, we have that $\gamma_2$ is also an odd thread.
Therefore, $\gamma_2$ is a Janus thread.

To see that $\gamma_1$ is an odd thread, first note that $1 \oplus c(\gamma_1) \oplus 1$ is the complement of an odd thread means $1 \ominus \gamma_1 \ominus 1$ is an odd thread.
Now the result follows from Proposition \ref{prop:ominusremoveones}.

($\Leftarrow$)
Suppose $\gamma_1$ and $\gamma_2$ are Janus threads.
The fact that $(1 \oplus c(\gamma_1) \oplus 1) \ominus 1 \ominus \gamma_2$ is an odd thread is immediate from Proposition \ref{prop:ominusoddknot}, the fact that 1 is an odd thread, and \eqref{eqn:odecomp}.
Similarly, since $\gamma_1$ is an odd thread, $1 \ominus \gamma_1 \ominus 1$ is also an odd thread by Proposition \ref{prop:ominusoddknot}.
Now the fact that $(1 \oplus c(\gamma_1) \oplus 1) \ominus 1 \ominus \gamma_2$ is an even thread follows from \eqref{eqn:edecomp}, since $\gamma_2$ is an even thread.
\end{proof}

The fact that our Janus thread decompositions exactly match our peakless Motzkin path decompositions allows us to construct a recursive bijection between these two sets.

\begin{theorem}
\label{thm:janusrecursivemap}
For each integer $n \ge -1$, there is a unique bijection 
$$K : JT_n \rightarrow UD_{n+1}$$
such that $K(@) = \emptyset$, $K(\emptyset) = L$, if $\gamma = 1 \ominus \gamma_1$ for a Janus thread $\gamma$ then $K(\gamma) = L K(\gamma_1)$ and if $\gamma = (1 \oplus c(\gamma_1) \oplus 1) \ominus 1 \ominus \gamma_2$ for Janus threads $\gamma_1$ and $\gamma_2$ then $K(\gamma) = U K(\gamma_1) D K(\gamma_2)$.
\end{theorem}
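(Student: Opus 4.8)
The plan is to induct on the length $n$ and to exploit the precise parallel between the recursive decomposition of Janus threads provided by Theorems~\ref{thm:januslargestfirst} and~\ref{thm:januslargestnotfirst} and the decomposition of peakless Motzkin paths provided by Theorem~\ref{thm:peaklessdecomp}. First I would establish that the stated recurrence genuinely defines a unique function. Every Janus thread of length at least one either begins with its largest entry or does not. By Theorem~\ref{thm:januslargestfirst} the former holds exactly when $\gamma = 1 \ominus \gamma_1$ for a uniquely determined Janus thread $\gamma_1$, while by Theorem~\ref{thm:januslargestnotfirst} the latter holds (necessarily with $|\gamma| \ge 2$) exactly when $\gamma = (1 \oplus c(\gamma_1) \oplus 1) \ominus 1 \ominus \gamma_2$ for uniquely determined Janus threads $\gamma_1$ (of nonnegative length) and $\gamma_2$. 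These two cases are mutually exclusive and exhaustive, the summands have strictly smaller length than $\gamma$, and the two exceptional values $K(@)=\emptyset$ and $K(\emptyset)=L$ are prescribed outright; hence strong induction on length shows that $K$ exists and is \emph{uniquely} determined by the recurrence.

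Next I would verify that $K$ has the claimed codomain, that is, $K(\gamma) \in UD_{n+1}$ whenever $\gamma \in JT_n$, by a routine induction using parts (iii) and (iv) of Theorem~\ref{thm:peaklessdecomp} together with a length count. In the first case $\gamma_1 \in JT_{n-1}$, so inductively $K(\gamma_1) \in UD_n$ and $K(\gamma) = L\,K(\gamma_1) \in UD_{n+1}$ by part (iii). In the second case, writing $\gamma_1 \in JT_{j_1}$ and $\gamma_2 \in JT_{j_2}$ one has $n = j_1 + j_2 + 3$, so inductively $K(\gamma_1) \in UD_{j_1+1}$ and $K(\gamma_2) \in UD_{j_2+1}$; since $\gamma_1$ has nonnegative length we get $j_1 + 1 \ge 1$, and part (iv) then gives $K(\gamma) = U\,K(\gamma_1)\,D\,K(\gamma_2) \in UD_{(j_1+1)+(j_2+1)+2} = UD_{n+1}$. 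The correspondence of side conditions is exact: the requirement that $\gamma_1$ have nonnegative length matches the hypothesis $k \ge 1$ in part (iv), and the possibility $\gamma_2 = @$, which yields $K(\gamma_2) = \emptyset \in UD_0$, matches the allowance $l \ge 0$.

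For bijectivity I would build the inverse recursively from the path side. Since Theorem~\ref{thm:peaklessdecomp} guarantees that every peakless Motzkin path of positive length is, in exactly one way, of the form $L\,a$ or $U\,a\,D\,b$, I can set $K^{-1}(\emptyset) = @$, $K^{-1}(L) = \emptyset$, $K^{-1}(L\,a) = 1 \ominus K^{-1}(a)$, and $K^{-1}(U\,a\,D\,b) = (1 \oplus c(K^{-1}(a)) \oplus 1) \ominus 1 \ominus K^{-1}(b)$. The same length bookkeeping run backwards, together with the converse directions of Theorems~\ref{thm:januslargestfirst} and~\ref{thm:januslargestnotfirst}, certifies that the outputs are genuine Janus threads of the correct length. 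A double induction then shows that $K \circ K^{-1}$ and $K^{-1} \circ K$ are both the identity: an $L$-path arises precisely from a thread beginning with its largest entry and a $U$-path precisely from one that does not, so the recursive clauses match term by term and collapse to the inductive hypothesis on the strictly shorter pieces.

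The step I expect to be most delicate is not any single computation but the bookkeeping that makes the two recursive schemes line up perfectly, especially at the boundary. One must confirm that the ``begins with its largest entry'' dichotomy for threads corresponds exactly to the ``starts with $L$ versus starts with $U$'' dichotomy for paths, and must track the exceptional objects $@$ and $\emptyset$ carefully, since $@$ has length $-1$ and maps to the empty path while $\emptyset$ maps to $L$. Getting the off-by-one length shift right (a thread of length $n$ goes to a path of length $n+1$) and checking that the constraints $k \ge 1$ and $l \ge 0$ of Theorem~\ref{thm:peaklessdecomp} correspond respectively to ``$\gamma_1$ has nonnegative length'' and ``$\gamma_2$ may equal $@$'' is where the real content lies.
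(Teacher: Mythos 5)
Your proposal is correct and takes essentially the same approach as the paper: the paper's own proof is just the remark that the argument is similar to those of Theorems~\ref{thm:HJ} and~\ref{thm:inverses}, using Theorems~\ref{thm:peaklessdecomp}, \ref{thm:januslargestfirst}, and~\ref{thm:januslargestnotfirst}, and your write-up fills in exactly that outline. Your handling of the boundary objects $@$ and $\emptyset$, the length shift, and the match between ``$\gamma_1$ of nonnegative length'' and the condition $k \ge 1$ is accurate, and the inverse construction here is in fact simpler than in Theorem~\ref{thm:inverses} because both Janus-thread decompositions are unique, so no analogue of the leftmost-decomposition argument is needed.
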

\begin{proof}
This is similar to the proof of Theorems \ref{thm:HJ} and \ref{thm:inverses}, using Theorems \ref{thm:peaklessdecomp}, \ref{thm:januslargestfirst}, and \ref{thm:januslargestnotfirst}.
\end{proof}

Although our description of $K$ above is recursive, it turns out this map has an elegant direct combinatorial description as well.
We close the section with this alternative description of $K$.

\begin{definition}
\label{defn:calK}
For any Janus thread of length $n \ge -1$, we define the lattice path ${\cal K}(\gamma)$ as follows.
For $n = -1$ or $n = 0$ we have ${\cal K}(@) = \emptyset$ and ${\cal K}(\emptyset) = L$.
If $n \ge 1$, then we obtain ${\cal K}(\gamma)$ as follows.
\begin{enumerate}
\item
Write $n+1$, followed by $\gamma$, followed by 0.
\item
For each pair of consecutive entries in this new sequence, if the entries are consecutive integers (in either order) then write $L$ between them.
If the consecutive entries are not consecutive integers, then write $U$ between them if they form an ascent, and $D$ if they form a descent.
\item
In the resulting sequence of $n+1$ $U$s, $L$s, and $D$s, number the subsequence of $U$s and $D$s from left to right, beginning with 1.
Change every odd-numbered $U$ to a $D$ and every odd-numbered $D$ to a $U$.
\end{enumerate}
The resulting sequence of $U$s, $L$s, and $D$s is the lattice path ${\cal K}(\gamma)$.
\end{definition}

\begin{example}
\label{ex:calK}
When $\gamma = 576894312$ we write $10\ 576894312\ 0$, and we obtain the sequence $DULULDLDLD$.
Changing the odd-numbered $U$s to $D$s and the odd-numbered $D$s to $U$s, we find that ${\cal K}(578694312) = UULDLDLULD$.
\end{example}

In Example \ref{ex:calK} we obtain a peakless Motzkin path, and in fact it's not difficult to check that ${\cal K}(576894312) = K(576894312)$.
As we show in our final result, ${\cal K}(\gamma)$ is a peakless Motzkin path for every Janus thread $\gamma$, and ${\cal K}(\gamma) = K(\gamma)$.

\begin{theorem}
\label{thm:KiscalK}
For any Janus thread $\gamma$, we have ${\cal K}(\gamma) = K(\gamma)$.
\end{theorem}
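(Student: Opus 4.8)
The plan is to induct on the length of $\gamma$, showing that ${\cal K}$ obeys exactly the recurrence that defines $K$ in Theorem~\ref{thm:janusrecursivemap}. The base cases $\gamma=@$ and $\gamma=\emptyset$ hold by definition, since ${\cal K}(@)=\emptyset=K(@)$ and ${\cal K}(\emptyset)=L=K(\emptyset)$. For the inductive step I would split into the two cases governed by Theorems~\ref{thm:januslargestfirst} and~\ref{thm:januslargestnotfirst}, according to whether or not $\gamma$ begins with its largest entry, and in each case verify that ${\cal K}$ produces the same block structure that $K$ does. Throughout, the key is to track how the ``write $n+1$ in front and $0$ at the end'' step of Definition~\ref{defn:calK} interacts with the recursive shape of $\gamma$, and then how the parity-based flip in step~3 redistributes over the resulting blocks.

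In the case $\gamma=1\ominus\gamma_1$ (so $\gamma$ begins with its largest entry $n$), the augmented sequence of Definition~\ref{defn:calK} is $n+1,n,\gamma_1(1),\ldots,\gamma_1(n-1),0$. Its first consecutive pair $(n+1,n)$ is a pair of consecutive integers and so contributes an $L$, while the remaining pairs are precisely the pairs of the augmented sequence $n,\gamma_1(1),\ldots,\gamma_1(n-1),0$ used to compute ${\cal K}(\gamma_1)$. Since a leading $L$ does not enter the subsequence of $U$s and $D$s, the numbering and flipping in step~3 are unaffected, so ${\cal K}(\gamma)=L\,{\cal K}(\gamma_1)$. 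By induction this equals $L\,K(\gamma_1)=K(\gamma)$, as desired.

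The substantive case is $\gamma=(1\oplus c(\gamma_1)\oplus 1)\ominus 1\ominus\gamma_2$, where $\gamma$ does not begin with its largest entry. Writing $p=|\gamma_1|\ge 0$ and $q=|\gamma_2|\ge -1$, I would expand $\gamma$ in one-line notation and read off the raw (step~2) label string of ${\cal K}(\gamma)$ as four consecutive pieces. The pair joining the prefixed $n+1$ to the first entry of $\gamma$ is a descent across a gap of size $p+2\ge 2$, giving a raw $D$. The next block comes from the $c(\gamma_1)$ portion; after subtracting the common shift its entries read $0,c(\gamma_1)(1),\ldots,c(\gamma_1)(p),p+1$, which is exactly the value-complement $x\mapsto(p+1)-x$ of the augmented sequence $p+1,\gamma_1(1),\ldots,\gamma_1(p),0$ computing ${\cal K}(\gamma_1)$. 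Since this value-complement preserves consecutiveness and exchanges ascents with descents, this block's raw labels are those of ${\cal K}(\gamma_1)$ with every $U$ and $D$ interchanged. The pair descending from the global maximum $n$ to $q+1$ is again a gap of size $p+2$, giving a second raw $D$, and the final block $q+1,\gamma_2(1),\ldots,\gamma_2(q),0$ is literally the augmented sequence computing ${\cal K}(\gamma_2)$. Thus the raw label string of ${\cal K}(\gamma)$ is $D$, then the $U\!\leftrightarrow\!D$-swapped raw string of ${\cal K}(\gamma_1)$, then $D$, then the raw string of ${\cal K}(\gamma_2)$.

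It remains to run step~3 on this string and check that it yields $U\,{\cal K}(\gamma_1)\,D\,{\cal K}(\gamma_2)$. The crucial input, supplied by the inductive hypothesis ${\cal K}(\gamma_1)=K(\gamma_1)$, is that ${\cal K}(\gamma_1)$ is a genuine Motzkin path and so has equally many $U$ and $D$ steps; hence its number $u_1$ of non-level steps is even. The leading raw $D$ is the first non-level step, so it flips to $U$. Within the swapped ${\cal K}(\gamma_1)$-block each non-level step is pushed one place later in the global numbering, so its flip-parity is reversed; combined with the initial $U\!\leftrightarrow\!D$ swap of the raw labels, the two sign changes cancel and this block reproduces ${\cal K}(\gamma_1)$ exactly (level steps pass through unchanged and in the same positions). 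Because $u_1$ is even, the bridging raw $D$ occupies the even global position $u_1+2$ and is therefore left as a $D$; and for the same parity reason the non-level steps of the final block retain their original parities, so that block reproduces ${\cal K}(\gamma_2)$ verbatim. Assembling the pieces gives ${\cal K}(\gamma)=U\,{\cal K}(\gamma_1)\,D\,{\cal K}(\gamma_2)=U\,K(\gamma_1)\,D\,K(\gamma_2)=K(\gamma)$. I expect the parity bookkeeping of this last step to be the main obstacle: the evenness of $u_1$ (obtained for free from the inductive identification of ${\cal K}(\gamma_1)$ with a balanced Motzkin path) is precisely what makes the swap in the first block and the survival of the bridging $D$ work out, and one must also check the degenerate sub-cases $\gamma_1=\emptyset$ and $\gamma_2=@$, where a block is empty but the argument goes through unchanged.
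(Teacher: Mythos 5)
Your proof is correct and follows essentially the same route as the paper: induction on $|\gamma|$, splitting according to whether $\gamma$ begins with its largest entry via Theorems~\ref{thm:januslargestfirst} and~\ref{thm:januslargestnotfirst}, and showing ${\cal K}(\gamma)=L\,{\cal K}(\gamma_1)$ or ${\cal K}(\gamma)=U\,{\cal K}(\gamma_1)\,D\,{\cal K}(\gamma_2)$ before invoking the recursion defining $K$. The only difference is that you spell out the block-and-parity bookkeeping (the complementation swap on the $c(\gamma_1)$ block and the evenness of the number of non-level steps in ${\cal K}(\gamma_1)$, which the induction supplies since $K(\gamma_1)$ is a balanced Motzkin path) that the paper compresses into the phrase ``Examining ${\cal K}((1 \oplus c(\gamma_1) \oplus 1) \ominus 1 \ominus \gamma_2)$, we find \dots''.
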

\begin{proof}
The result is easy to check when $\gamma$ has length less than two, so suppose $|\gamma| \ge 3$;  we argue by induction on the length of $\gamma$.

If $\gamma$ begins with its largest entry, then by Theorem \ref{thm:januslargestfirst} there is a Janus thread $\gamma_1$ for which $\gamma = 1 \ominus \gamma_1$.
Since $|\gamma_1| = |\gamma|-1 \ge 2$, we have ${\cal K}(\gamma) = L {\cal K}(\gamma_1)$.
By induction, ${\cal K}(\gamma_1) = K(\gamma_1)$, so ${\cal K} (\gamma) = K(\gamma)$ by the definition of $K$.

If $\gamma$ does not begin with its largest entry, then by Theorem \ref{thm:januslargestnotfirst} there are Janus threads $\gamma_1$ and $\gamma_2$ for which $\gamma = (1 \oplus c(\gamma_1) \oplus 1) \ominus 1 \ominus \gamma_2$, where $\gamma_1$ has nonnegative length.
Examining ${\cal K}( (1 \oplus c(\gamma_1) \oplus 1) \ominus 1 \ominus \gamma_2)$, we find ${\cal K}(\gamma) = U {\cal K}(\gamma_1)D{\cal K}(\gamma_2)$.
Now the result follows by induction and the definition of $K$.
\end{proof}

\section{Future Directions}

This work originated in a larger effort to give a nonrecursive characterization of the snow leopard permutations.
For instance, we have made extensive use of the fact that snow leopard permutations preserve parity, but most permutations which preserve parity are not snow leopard permutations.
Similarly, we noted in the Introduction that snow leopard permutations are anti-Baxter permutations, so they avoid the vincular patterns $3\underline{41}2$ and $2\underline{14}3$.
Still, most anti-Baxter permutations which preserve parity are not snow leopard permutations.
\cite{Comps} introduce in their Definition 3.2 a function $\kappa$ which maps each permutation to a lattice path consisting of North and East steps, and they show that $\kappa$ is a bijection between the set of snow leopard permutations of length $2n-1$ and the set of Catalan paths from $(0,0)$ to $(n,n)$.
However, there are many other permutations $\pi$ for which $\kappa(\pi)$ is a Catalan path.
Indeed, let's call a permutation {\em SLP-like} whenever it is an anti-Baxter permutation which preserves parity and maps to a Catalan path under $\kappa$.
Then in Table \ref{table:SLPlike} 
\begin{table}[ht]
\centering
\begin{tabular}{c|c|c|c|c|c}
length & $1$ & $3$ & $5$ & $7$ & $9$ \\
\hline
number of SLP-like permutations & 1 & 2 & 7 & 32 & 175 \\
number of snow leopard permutations & 1 & 2 & 5 & 14 & 42
\end{tabular}
\caption{The number of SLP-like permutations compared with the number of snow leopard permutations.}
\label{table:SLPlike}
\end{table}
we see that most SLP-like permutations are still not snow leopard permutations.
Giving a nonrecursive characterization of the snow leopard permutations remains an open problem.
Similarly, it remains an open problem to prove our Conjecture \ref{conj:Motzkinproduct}.
Finally, as far as we know no one has investigated the permutations which are compatible with alternating Baxter permutations, even though \cite{Cori} have shown that the number of alternating Baxter permutations of length $2n$ (resp.~$2n+1$) is $C_n^2$ (resp.~$C_n C_{n+1}$).
We conjecture that the compatibility relation is a bijection in this case, just as it is in the case of doubly alternating permutations and snow leopard permutations.

\bibliographystyle{abbrvnat}
\bibliography{references}
\label{sec:biblio}

\end{document}